\newtheorem{theorem}{Theorem}[section]
\newtheorem{proposition}[theorem]{Proposition}
\newtheorem{lemma}[theorem]{Lemma}
\theoremstyle{definition}
\title{Diophantine type of interval exchange maps}
\author{Dong Han Kim}
\address{Department of Mathematics Education, Dongguk University-Seoul, Seoul 100-715, Korea.}
\email{kim2010@dongguk.edu}
\thanks{Supported by the Korea Research Foundation(KRF) grant funded by the Korea government(MEST) (No. 2009-0068804).}
\subjclass[2000]{37E05, 11J70}
\keywords{interval exchange map, Roth type, Diophantine type, recurrence time, Rauzy-Veech continued fraction algorithm}
\begin{document}

\begin{abstract} Roth type irrational rotation numbers have several equivalent 
arithmetical characterizations as well as several equivalent characterizations in terms of the 
dynamics of the corresponding circle rotations.
In this paper we investigate how to generalize Roth-like Diophantine conditions to 
interval exchange maps. If one considers the dynamics in parameter space one can introduce two 
nonequivalent Roth-type conditions, the first (condition (Z))  by means of the Zorich cocyle \cite{Z1}, the 
second (condition (A)) by means of a further acceleration of the continued fraction algorithm  by Marmi-Moussa-Yoccoz introduced in \cite{MMY1}.
A third very natural condition (condition (D)) arises by considering the distance between the discontinuity 
points of the iterates of the map.
If one considers the dynamics of an interval exchange map in phase space then one can introduce the notion of 
Diophantine type by considering the asymptotic scaling of return times pointwise  or w.r.t. uniform convergence 
(resp. condition (R) and (U)).  
In the case of circle rotations all the above conditions are equivalent. For interval exchange maps of three intervals
we show  that (D) and (A) are equivalent and imply (Z), (U) and (R) which are equivalent among them.
For maps of four intervals or more we prove several results, the only relation which we cannot decide is whether (Z) implies (R)
or not. 
\end{abstract}

\maketitle

\section{Introduction}

Let $\theta$ be an irrational number: its \emph{type} $\eta\ge 1$ is defined by
$$ \eta = \sup \{ \beta : \liminf_{j \to \infty} j^{\beta} \| j \theta \| = 0 \} $$
where $\|\cdot \|$ denotes the distance from the nearest integer. 
An irrational number is called of \emph{Roth-type} if and only if $\eta =1$. 
This statement is equivalent that 
for every $\varepsilon >0$ there exists a positive constant $C_\varepsilon$
such that 
$$\left| \theta - \frac pq \right| \geq \frac{C_\varepsilon }{q^{2+\varepsilon}} \quad \text{ for all rationals } \ \frac pq.$$

The set of irrational numbers of Roth type has Lebesgue measure 1 and  
contains all algebraic irrational numbers. 
Also it is invariant under the modular group $\textrm{GL}(2,\mathbb Z)$.

Let $(q_n)_{n\in\mathbb{N}}$ be the sequence of the denominators of the principal convergents of the irrational $\theta$ and let $(a_n)_{n\in\mathbb{N}}$ be the sequence of its partial quotients.
Roth type irrationals can also be determined by the growth conditions of $(q_n)_{n\in\mathbb{N}}$: 
for all $\varepsilon >0$ there is a constant  $C_\varepsilon$ such that 
$$q_{n+1} < C_\varepsilon q_n^{1+\varepsilon}.$$
This condition is also equivalent to the growth condition of the partial quotients $(a_n)_{n\in\mathbb{N}}$: 
for all $\varepsilon >0$ there is a constant  $C_\varepsilon$ such that 
$$a_{n+1} < C_\varepsilon q_n^{\varepsilon}.$$

Roth type condition for the irrational $\theta$ can also be given in terms of the dynamics of the associated rotation $R_\theta : x\mapsto x+\theta$ on $\mathbb T=\mathbb R/\mathbb Z$. 
One arises by considering the cohomological equation associated to the rotation $R_\theta$  (see, e.g., \cite{MMY1}). 
Another dynamical characterization of Roth type rotations is obtained by means of the asymptotic scaling laws of first return times and will be recalled below.  
Finally, we consider how evenly an orbit of the rotation is distributed.
If the rotation is of Roth type, then for all $\varepsilon >0$ there is a constant $C_\varepsilon$
such that the minimum distance between points belonging to a finite segment of orbit made $n$  iterates should be bigger than $C_\varepsilon n^{-(1+\varepsilon )} $.

In this paper we investigate the relationship among several not-necessarily equivalent generalizations of the definitions of the Roth type given above to interval exchange maps. 

Let $r>0$ and let  $\tau_r (x)$ be the return time to $r$-neighborhood of $x$
\begin{equation}\label{tau} \tau_r (x) =  \min \{ j \ge 1 : d( T^j x , x ) < r \}\, .  \end{equation}
For an irrational circle rotation (\cite{ChoeSeo}) we have that
\begin{align*}
\varliminf_{r \to 0^+} \frac{ \log \tau_{r} (x) }{ - \log r} &= \frac{1}{\eta},&
\varlimsup_{r \to 0^+} \frac{ \log \tau_{r} (x) }{ - \log r} &= 1.
\end{align*}
Therefore, the rotation number is of Roth type if and only if 
$$
\lim_{r \to 0^+}  \frac{ \log \tau_{r} (x) }{ - \log r}  = 1.
$$

Irrational circle rotations are the prototype of quasiperiodic dynamics and can be generalized as interval exchange maps. 
An interval exchange map $T$ on an interval $I$ is a one-to-one map to itself which is a translation on each finite number of subinterval partition of $I$. 
The map $T$ is an orientation preserving piecewise isometry and preserves the Lebesgue measure. 
Let $d \ge 2$ be the number of the subintervals on which $T$ is a translation.
If $d = 2$, the interval exchange map $T$ corresponds the rotation of circle.

A typical interval exchange map is minimal(\cite{Ke1}).
However, minimality condition for the interval exchange map does not imply unique ergodicity(\cite{Ke2, KeyNew}).
But still almost every interval exchange map is uniquely ergodic(\cite{Ma,Ve1}) and weakly mixing(\cite{AF}).

The modular group $\textrm{GL}(2,\mathbb Z)$ plays an important role for the study of rotation of circle with renormalization scheme associated to the continued fraction algorithm. 
It was generalized by Rauzy and Veech for interval exchange maps by introducing the induced map on appropriated subintervals(\cite{Ra, Ve1}).
The continued fraction algorithm for interval exchange maps is ergodic on the parameter space of interval exchange maps with respect to an absolutely continuous invariant measure with infinite mass. 

Zorich considered an acceleration scheme to produce an ergodic finite invariant measure on the parameter space of the interval exchange maps(\cite{Z1}).
For the rotational case ($d=2$), Zorich's map indeed corresponds the Gauss map which is an acceleration of the Faray map which does not have a probability absolute continuous invariant measure. 

A further acceleration of the Zorich algorithm was studied in \cite{MMY1} by Marmi, Moussa and Yoccoz. They considered a more accelerated algorithm which also preserves an ergodic finite absolute continues invariant measure in the investigation of the regularity of the solutions of the cohomological equation associated to interval exchange maps.
Both the accelerations by Zorich and by Marmi-Moussa-Yoccoz are reduced to the Gauss map for $d=2$.

The notion of Roth type interval exchange map was introduced in \cite{MMY1}: this is a natural extension of Roth type irrational circle rotations and Roth type interval exchange maps form a full measure set in the parameter space of interval exchange maps. 
In \cite{KimMarmi} it was proved that for Roth type interval exchange maps the recurrence time has the same scaling behaviour as for irrational rotations, namely
$$ \lim_{r \to 0^+} \frac{ \log \tau_{r} (x) }{ - \log r} = 1, \text{ a.e. } x.$$

The Roth type condition for the irrational rotation can be generalized to the interval exchange map in several different ways. 
We consider arithmetic characterization using the Roth type growth condition for Marmi-Moussa-Yoccoz cocycle (Condition (A)) and the Roth type growth condition for Zorich cocycle (Condition (Z)).
Uniform return time condition (Condition (U)) and pointwise return time condition (Condition (R)) are defined in terms of 
the dynamics of the map in phase space instead of its evolution in parameter space as is the case for conditions (A) and (Z).
We also consider Roth type condition for the minimal distance between discontinuities (Condition (D)).
In Section~\ref{sec_main_results} these Diophantine conditions for interval exchange maps are given in detail.
In this article we show the relations between the Diophantine conditions 
especially the equivalence of Roth type growth condition for Marmi-Moussa-Yoccoz cocycle and Roth type condition for minimal distance between discontinuities. 
In \cite {MMY1}, it is cited that the relation between them is not clear. (\cite{MMY1}, Sec 1.3.1. Remark 2)


After completing this paper the author noticed the recent work by Marmi, Moussa, and Yoccoz (\cite{MMY2}). 
They also considered the equivalence of Condition (A) and Condition (D) (\cite{MMY2}, Proposition C.1).

\section{Background on continued fraction algorithms for interval exchange maps}\label{sec_background}

An interval exchange map is determined by the combinatorial data of the permutation and the length data of subintervals.
Let $\mathcal A$ be a finite set for the name of subintervals.
We denote the combinatorial data by two bijections $(\pi_t, \pi_b)$
from $\mathcal A$ onto $\{ 1, 2, \dots, d\}$.
which indicate the order of the subintervals before and after the interval exchange map.
The length data, denoted by $(\lambda_\alpha)_{\alpha\in \mathcal A}$, give the length of the corresponding subintervals. 

We set 
$$\lambda^* := \sum_{\alpha \in \mathcal A} \lambda_A, \quad I := [ 0, \lambda^*)$$
and 
$$ p_\alpha := \sum_{\pi_t(\beta) < \pi_t(\alpha)} \lambda_\beta,
\quad I_\alpha := [p_\alpha, p_\alpha+\lambda_\alpha ), \quad I=\bigsqcup_{\alpha \in \mathcal A} I_\alpha. $$
Then the interval exchange map $T$ associated to the combinatorial data $(\pi_t, \pi_b)$ and the length data $(\lambda_\alpha)_{\alpha \in \mathcal A}$ is a bijective map on $I$ given by
$$ T(x) = x + \sum_{\pi_b(\beta) < \pi_b (\alpha)} \lambda_\beta - \sum_{\pi_t(\beta) < \pi_t (\alpha)} \lambda_\beta 
\ \text{ for } \ x \in I_\alpha.$$
Note that $T$ is discontinuous at $p_\alpha$ with $\pi_t(\alpha) >1$.

We will consider only combinatorial data $(\pi_t, \pi_b )$ which are \emph{admissible}, 
in the sense that for all $k=1,2,\dots, d-1$, we have
$$ \pi_t^{-1}(\{1,\ldots ,k\}) \ne \pi_b^{-1}(\{1,\ldots ,k\}). $$

An interval exchange map $T$ is said to have the \emph{Keane property}
if there exist no $\alpha,\beta \in \mathcal A$ and positive integer $m$ such that $T^m (p_\alpha)= p_\beta$ and $\pi_t (\beta )>1$.
An admissible interval exchange map with rationally independent length data has the Keane property and an interval exchange map with Keane's property is minimal\cite{Ke1}.
Thus Keane's property corresponds to the notion of irrationality for interval exchange maps.

For admissible interval exchange maps with the Keane property we can
introduce the generalization of continued fraction algorithm to interval exchange maps due to the work of Rauzy \cite{Ra}, Veech \cite{Ve1} and Zorich \cite{Z1,Z2}.
We refer to \cite{MMY1, Y1, Y2} and references therein for the detailed discussions and proofs.

Let $(\pi_t, \pi_b)$ be an admissible pair. We define two
new admissible pairs $\mathcal{R}_t (\pi_t, \pi_b)$ and $\mathcal{R}_b (\pi_t, \pi_b)$ as follows:
let $\alpha_t$ and $\alpha_b$ be the (distinct) elements of $\mathcal{A}$ such that
$\pi_t(\alpha_t)=\pi_b(\alpha_b)=d$; one has
\begin{align*}
\mathcal{R}_t (\pi_t, \pi_b) &= (\pi_t, \hat{\pi}_b), \\
\mathcal{R}_b (\pi_t, \pi_b) &= (\hat{\pi}_t, \pi_b),
\end{align*}
where
\begin{align*}
\hat{\pi}_b (\alpha) &= \begin{cases}
\pi_b(\alpha ) & \textrm{if } \pi_b(\alpha )\le \pi_b(\alpha_t),\\
\pi_b(\alpha )+1 & \textrm{if } \pi_b(\alpha_t )<\pi_b(\alpha)<d,\\
\pi_b(\alpha_t )+1 & \textrm{if } \alpha=\alpha_b$, ($\pi_b(\alpha_b)=d);\\
\end{cases}
\\
\hat{\pi}_t (\alpha) &= \begin{cases}
\pi_t(\alpha ) & \textrm{if } \pi_t(\alpha )\le \pi_t(\alpha_b),\\
\pi_t(\alpha )+1 & \textrm{if } \pi_t(\alpha_b )<\pi_t(\alpha)<d,\\
\pi_t(\alpha_b )+1 & \textrm{if } \alpha=\alpha_t$, ($\pi_t(\alpha_t)=d).
\end{cases}
\end{align*}

\noindent
The {\it Rauzy class} of $(\pi_t, \pi_b)$ is the
set of admissible
pairs obtained by saturation of $(\pi_t, \pi_b)$ under the action of
$\mathcal{R}_t$ and $\mathcal{R}_b$.
The {\it Rauzy diagram} has for vertices the elements
of the Rauzy class, each vertex $(\pi_t,\pi_b)$
being the origin of two arrows joining $(\pi_t,\pi_b)$ to $\mathcal{R}_t (\pi_t, \pi_b)$, $\mathcal{R}_b (\pi_t, \pi_b)$. 
See Figure~\ref{diagram3} and \ref{diagram4} for the Rauzy diagrams for a 3-interval map and a 4-interval map.
For an arrow joining $(\pi_t,\pi_b)$ to $\mathcal{R}_t (\pi_t, \pi_b)$ (respectively $\mathcal{R}_b (\pi_t, \pi_b)$) the element $\alpha_t \in \mathcal{A}$ (respectively $\alpha_b \in \mathcal{A}$) is called the {\it winner} and 
the element $\alpha_b \in \mathcal{A}$ (respectively $\alpha_t \in \mathcal{A}$) is called the {\it loser}.

We say that $T$ is of {\it top type} (respectively {\it bottom type})
if one has $\lambda_{\alpha_t}>\lambda_{\alpha_b}$ (respectively $\lambda_{\alpha_b}>\lambda_{\alpha_t}$);
we then define a new interval exchange map $\mathcal{V} (T)$ by the following data:
the admissible pair $\mathcal{R}_t(\pi_t, \pi_b)$ and the lengths
$(\hat{\lambda}_\alpha)_{\alpha\in\mathcal{A}}$ given by
$$\begin{cases}
\hat{\lambda}_\alpha = \lambda_\alpha & \textrm{if } \alpha\not=\alpha_t,\\
\hat{\lambda}_{\alpha_t} = \lambda_{\alpha_t}-\lambda_{\alpha_b} & \textrm{otherwise}
\end{cases} 
$$
for the top type $T$; 
the admissible pair $\mathcal{R}_b(\pi_t, \pi_b)$ and the lengths
$$\begin{cases}
\hat{\lambda}_\alpha = \lambda_\alpha & \textrm{if } \alpha\not=\alpha_b,\\
\hat{\lambda}_{\alpha_b} = \lambda_{\alpha_b}-\lambda_{\alpha_t} & \textrm{otherwise}
\end{cases} 
$$
for the bottom type $T$.

The interval exchange map $\mathcal{V} (T)$ is the first return map of  $T$ on
$\left[0, \sum_{\alpha} \hat \lambda_\alpha\right)$.
We also associate to $T$ the arrow in the Rauzy diagram
joining $(\pi_t,\pi_b)$ to $\mathcal{R}_t(\pi_t, \pi_b)$ or $\mathcal{R}_b(\pi_t, \pi_b)$.
Iterating this process, we obtain a sequence of interval exchange map
$T(n) = \mathcal{V}^n (T)$, $n\ge 0$ and an infinite path in the Rauzy diagram starting from $(\pi_t,\pi_b)$. 
In fact a further
property of irrational interval exchange maps (i.e.\ with the Keane property)
is that every letter in $\mathcal A$ is taken as a winner infinitely many times
in the infinite path (in the Rauzy diagram) associated to $T$.
This property is fundamental in order to be able
to group together several iterations of
$\mathcal{V}$ to obtain the accelerated Zorich continued fraction algorithm
introduced in \cite{MMY1}.

For an arrow $\gamma$ with winner $\alpha$ and loser $\beta$ in the Rauzy diagram,
let
$$ B_\gamma = \mathbb I + E_{\beta\alpha}$$
where $\mathbb I$ is the identity matrix and $E_{\beta\alpha}$ is the elementary matrix with the only nonzero element at $(\beta,\alpha)$ which is equal to 1.
For a finite path $\underline \gamma = ( \gamma_1, \dots, \gamma_n)$ in the Rauzy diagram 
we have a $\textrm{SL} (\mathbb{Z}^{\mathcal A})$ matrix with nonnegative entries
$$ B_{\underline \gamma} = B_{\gamma_n} \cdots B_{\gamma_1}.$$
Let $\gamma^T(m,n) = \gamma (m,n)$ be the path in the Rauzy diagram from $\pi(m)$ to $\pi(n)$ for $m\le n$
and denote by $$Q(m,n)= B_{\gamma (m,n)} \text{ and } Q(n) = Q (0,n).$$
Let $\lambda(n)$ be the length data of $T(n)$.
Then we have
\begin{equation}\label{lambdaQ}
\lambda(m) = \lambda(n) Q(m,n).
\end{equation}

For $m\le n$, $T(n)$ is the induced map of $T(m)$
on $I(n)=[0, \lambda^*(n) )$,
where $\lambda^*(n) = \sum_{\alpha \in \mathcal A} \lambda_\alpha (n)$;
the return time on $I_\beta (n)$ to $I(n)$ under the iteration $T(m)$ is
$Q_{\beta}(m,n) :=  \sum_\alpha Q_{\beta\alpha}(m,n)$
and the time spent in $I_\alpha (m)$ is $Q_{\beta\alpha}(m,n)$.
By (\ref{lambdaQ}) we have 
\begin{equation}\label{QQ}
\lambda^* = \sum_{\alpha\beta} \lambda_\beta(n) Q_{\beta\alpha}(n)
= \sum_{\beta} \lambda_\beta(n) Q_{\beta}(n).
\end{equation}
Moreover, we have 
\begin{equation}\label{partition}
 [0,\lambda^*) =  \bigsqcup_{\alpha \in \mathcal A} \left( \bigsqcup_{i = 0}^{Q_\alpha (n)-1} T^i ( I_\alpha(n) ) \right) .
\end{equation}

Zorich's accelerated continued fraction algorithm is obtained by considering 
$(\mathcal{V}^{n_k})_{k\ge 0}$ where $({n_k})_{k\ge 0}$ is the following sequence: 
$n_0=0$ and $n_{k+1}>n_k$ is chosen so as to assure that 
$\gamma(n_{k},n_{k+1})$ is the longest path  whose arrows have the same winner.

The further acceleration algorithm by Marmi-Moussa-Yoccoz, which was introduced in \cite{MMY1}, is obtained by considering 
$(\mathcal{V}^{m_k})_{k\ge 0}$ where $({m_k})_{k\ge 0}$ is defined as follows: 
$m_0 = 0$ and 
$m_{k+1}>m_k$ is the largest integer such that not all letters in $\mathcal A$ 
are taken as winner by arrows in $\gamma(m_{k},m_{k+1})$.

Let\footnote{We warn the reader that our notations are slightly different from the one followed in \cite{Y2}: in this
paper the matrices $A(k)$ denote the matrices obtained by the accelerated Zorich algorithm introduced in \cite{MMY1} and $Z(k)$ are those obtained by the original Zorich algorithm, whereas in \cite{Y2} the former were denoted $Z(n)$ since 
the latter were never used  explicitely.}
\begin{align*}
&\hbox{Zorich cocycle}& Z (k) & = Q(0,n_k), &Z (k,\ell) & = Q(n_k,n_\ell),\\
&\hbox{Marmi-Moussa-Yoccoz cocycle}&  A (k) & = Q(0,m_k), &A (k,\ell) &= Q(m_k,m_\ell).
\end{align*}

The most important virtue of the Marmi-Moussa-Yoccoz cocycle is the following:

\begin{lemma}[\cite{MMY1} Lemma 1.2.4]\label{MMYlemma}
Let $r \ge \max ( 2d - 3, 2)$. Then we have 
$$ A_{\beta \alpha}(k,k + r) >0 \text{ for all } \alpha ,\beta\in\mathcal{A}. $$
\end{lemma}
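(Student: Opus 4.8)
The plan is to show that after $r$ steps of the Marmi-Moussa-Yoccoz algorithm, every letter has acted as a winner at least once \emph{and} — crucially — with enough "mixing" that every entry of the product matrix becomes strictly positive. First I would unwind the definition of the $m_k$: by construction, $\gamma(m_k,m_{k+1})$ is the \emph{largest} path in which not all letters win, so $\gamma(m_k,m_{k+1}+1)$ (one more arrow) has all $d$ letters appearing as winners. Thus in each block $\gamma(m_k,m_{k+1})$ at least $d-1$ distinct letters appear as winners, and by appending the next arrow we get a path $\gamma(m_k,m_{k+1}+1)$ whose winners exhaust $\mathcal{A}$. Concatenating $r$ consecutive blocks $\gamma(m_k,m_{k+r})$ therefore contains a subpath in which each letter of $\mathcal A$ is a winner, in fact many times over once $r$ is large enough.

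Next I would translate winner/loser combinatorics into positivity of the matrices $B_\gamma = \mathbb I + E_{\beta\alpha}$. The key elementary observation is the monotonicity $Q_{\beta\alpha}(m,n) \le Q_{\beta\alpha}(m,n')$ for $n \le n'$ (all entries are nonnegative and $B_{\gamma}$ has $1$'s on the diagonal, so multiplying by further $B_\gamma$'s never decreases an entry); hence it suffices to find, inside $\gamma(m_k,m_{k+r})$, a subpath whose product matrix is already everywhere positive. The standard mechanism is: when a letter $\alpha$ wins an arrow with loser $\beta$, the row $\beta$ of the running product absorbs (adds) the row $\alpha$; so if we can arrange a sequence of wins that propagates the initially-positive diagonal entry of every column into every row, we are done. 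Concretely, I would use the fact (a feature of Rauzy diagrams for irreducible permutations) that one pass in which every letter wins connects the "support graph" of the matrix enough to make it primitive, and $2d-3$ arrows beyond the first full cycle of winners guarantee the full strong connectivity needed — this is exactly the role of the bound $r \ge \max(2d-3,2)$, which for $d=2$ degenerates to $r\ge 2$ matching the classical continued fraction case where two steps of the Gauss map already give a positive matrix.

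The main obstacle is the sharp combinatorial counting that turns "$r$ blocks, each missing at most one winner" into "the product matrix has no zero entry." One must be careful that the letters missing as winners in different blocks may differ, and that a single winner-pass need not by itself primitivize the matrix — one genuinely needs the diameter-type estimate bounding how many winner-events are required to route every column's mass into every row, and this is where the $2d-3$ enters (it is the worst-case number of arrows, over all admissible permutations in the Rauzy class, to achieve full positivity). I would handle this by an induction on the number of zero entries of the partial product: each suitably chosen winning arrow strictly decreases the number of zeros (or keeps it while making progress toward a configuration where the next win must decrease it), and after at most $2d-3$ such arrows — which are available inside $\gamma(k,k+r)$ by the first paragraph — no zeros remain. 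Once the positivity of $A(k,k+r)=Q(m_k,m_{k+r})$ is established for this particular $r$, the claim for all larger $r$ follows immediately from the monotonicity noted above.
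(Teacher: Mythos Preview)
The paper does not actually prove this lemma: it is quoted verbatim from \cite{MMY1}, Lemma~1.2.4, and used as a black box. So there is no ``paper's own proof'' to compare against; your sketch is being measured only against the original argument of Marmi--Moussa--Yoccoz.

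As a sketch, your outline captures the right ingredients --- the definition of the $m_k$ guarantees that in each block $\gamma(m_k,m_{k+1})$ exactly one letter fails to win (and it wins on the very next arrow), and the monotonicity of the entries under further multiplication by the elementary matrices $B_\gamma$ is indeed the reason why it suffices to get positivity once. Where your proposal stays vague is precisely the point you flag yourself: the quantitative step from ``every letter wins several times over $r$ blocks'' to ``the product has no zero entry with $r=2d-3$.'' Your suggested induction on the number of zero entries is not enough as stated, because a generic winning arrow need not decrease that count; one really needs the structure of the Rauzy diagram (in particular, that winners and losers are the \emph{last} letters for $\pi_t,\pi_b$, and how $\mathcal R_t,\mathcal R_b$ permute them) to track which rows inherit positivity from which. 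The MMY1 argument handles this by following, over two consecutive accelerated steps, how the column of the letter that was ``skipped'' as winner gets filled in, and then iterates; the constant $2d-3$ comes out of counting how many such two-step propagations are needed in the worst case. If you want to make your sketch into a proof, that propagation mechanism is the missing idea to make explicit.
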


The following inequality follows easily from (\ref{QQ})
\begin{equation}\label{QQQ}
\min_{\alpha \in \mathcal A} \lambda_\alpha (n) \le \frac{\lambda^*}{\| Q(n) \|} \le \max_{\alpha \in \mathcal A} \lambda_\alpha (n),
\end{equation}
where the norm of a matrix $ B $ is simply the sum of the absolute values of its entries.
This is the 
norm that we will use for matrices throughout the whole paper. 
We assume that $\lambda^* = 1$ unless it is specified.

\section{Diophantine conditions for interval exchange maps}
\label{sec_main_results}

If one considers the dynamics in parameter space of interval exchange maps one can introduce three slightly different Diophantine conditions:

\begin{itemize}
\item[(A)] Roth type growth condition for the Marmi-Moussa-Yoccoz cocycle : \\
For any $\varepsilon >0 $ there exist $C_\varepsilon >0$ such that for all $k \ge 1 $ we have 
$$ \| A(k,k+1) \| \le C_\varepsilon \| A(k) \|^\varepsilon . $$

\item[(Z)] Roth type growth condition for the Zorich cocycle : \\
For any $\varepsilon >0 $ there exist $C_\varepsilon >0$ such that for all $k \ge 1 $ we have 
$$ \|  Z(k,k+1) \| \le C_\varepsilon \| Z (k) \|^\varepsilon . $$
\end{itemize}

Let $\Delta(T)$ be the minimum distance between the discontinuity points of $T$ or the end points 0 and 1.

\begin{itemize}
\item[(D)] Roth type condition for the minimal distance between discontinuities : \\
For any $\varepsilon >0 $ there exist $C_\varepsilon >0$ such that for all $n \ge 1 $ we have
$$ \Delta (T^n) \ge \frac{C_\varepsilon}{n^{1+\varepsilon}} . $$
\end{itemize}

If one considers the dynamics of an interval exchange map in phase space then one can introduce
two slightly different Diophantine conditions: 
\begin{itemize}
\item[(R)] Pointwise return time condition : 
$$ \lim_{r \to 0} \frac{ \log \tau_r (x) }{-\log r} = 1 \text{ for almost every } x.$$

\item[(U)] Uniform return time condition : 
$$ \lim_{r \to 0} \frac{ \log \tau_r (x) }{-\log r} = 1 \text{ uniformly}.$$
\end{itemize}
Here $\tau_r(x)$ be the first return time to $r$-neighborhood of $x$ defined in (\ref{tau}).

Here and in what follows the matrix norm denoted by $\|Q\| =\sum_{\alpha \beta} |Q_{\alpha \beta}|$.
In the case of circle rotations (interval exchange map with $d=2$) the three conditions in parameter space (namely (A), (Z) and (D)) are equivalent\footnote{For an irrational rotation, 
$ Z(1) =  A(1) = \begin{pmatrix} 1 & 0  \\ a_1 -1 & 1 \end{pmatrix}$,
$Z(k-1,k) = A(k-1,k) = \begin{pmatrix} 1 & 0 \\ a_k & 1 \end{pmatrix}$
or $\begin{pmatrix} 1 &  a_k \\ 0   & 1 \end{pmatrix}$ 
and $ Z(k) = A(k) = \begin{pmatrix} q_{k-1} - p_{k-1} &  p_{k-1} \\ q_k - p_k  & p_k \end{pmatrix}$ or  
$\begin{pmatrix} q_k - p_k & p_k \\ q_{k-1} - p_{k-1}  & p_{k-1} \end{pmatrix}$ depending on $k$ is odd or even.
Therefore, we have
$\|Z(k,k+1)\| =\| A(k,k+1) \| = a_{k+1} + 2$, $\| Z (k) \| = \| A(k) \| = q_k + q_{k-1} $
and   Condition (A) and (D) are equivalent to the statement that for any $\varepsilon >0$ there is a positive constant $C_\varepsilon$ such that
$ a_{k+1} \le  C_\varepsilon  q_k^\varepsilon, $
which just the Roth type condition for the irrational rotation number.}, as well as the two conditions in phase space ((R) and (U)). 
In \cite{ChoeSeo} the equivalence for circle rotations between the two sets of conditions
(Roth type in parameter space and the return time characterization) was proved. 
For general interval exchange maps  in \cite{KimMarmi} it is proved that (A) implies (R).

In this article, we investigate the relation among Condition (A), (Z), (D), (U) and (R) for general interval exchange maps (with $d\ge 3$). It is not difficult to verify that from the definitions one has 

(A) $\Rightarrow$ (Z) and 

(U) $\Rightarrow$ (R).

In Section~\ref{3iem_sec} we will prove that for 3-interval exchange maps (Z), (U)  and (R) are equivalent 
and (A) and (D) are equivalent. Moreover, in the same Section, we construct a family of 3-interval exchange maps  which all satisfy Condition (U) but neither Condition (A) nor (D).
For general maps with $d\ge 4$ we will establish that 

(A) $\Leftrightarrow$ (D) : this is proved in Section~\ref{AandD} 

(D) $\Rightarrow$ (U) : this is proved in  Section~\ref{DtoU}

(U) $\Rightarrow$ (Z) : this is proved in  Section~\ref{UtoZ}

(R) does not imply (Z) : this is proved in  Section~\ref{exm_sec1}

(Z) does not imply (U) : this is proved in  Section~\ref{exm_sec2}

The only relation we could not decide is whether (Z) implies (R) or not.

\section{Condition (A) is equivalent to Condition (D)}\label{AandD}

For each $\alpha \in \mathcal A$ let
$$ p_\alpha (n) = \sum_{\pi_t^{(n)}(\beta) < \pi_t^{(n)}(\alpha)} \lambda_\beta (n), \qquad 
q_\alpha (n) = \sum_{\pi_b^{(n)}(\beta) < \pi_b^{(n)}(\alpha)} \lambda_\beta (n), $$
and
$$ I_\alpha (n) =  [p_\alpha (n), p_\alpha (n) + \lambda_\alpha (n)).$$
Then
$$T(n) ( I_\alpha (n) ) = [ q_\alpha (n), q_\alpha(n) + \lambda_\alpha(n) ).$$
Denote by $D(T)$ the set of discontinuity points of $T$.
Let 
$$\mathcal A' = \{ \alpha \in \mathcal A : \pi_t (\alpha) > 1 \}. $$
Note that we have $D(T(n)) = \{ p_\alpha (n) : \alpha \in \mathcal A'\}$.

\begin{lemma}\label{lemmalr}
For each $\alpha \in \mathcal A'$, then we have 
$$T^i ( p_\alpha (n) ) \in D(T)$$
for some $i$ such that $0 \le i < Q_\alpha (n) $.
Conversely,  if $p \in D(T)$, then 
$$p = T^i ( p_\alpha(n) )   \text{ for an } \alpha \in \mathcal A' \text{ and } 0 \le i < Q_\alpha (n). $$
\end{lemma}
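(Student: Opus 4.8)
The plan is to analyze the orbit of a discontinuity point $p_\alpha(n)$ of $T(n)$ under the original map $T$, and to match discontinuities of $T$ with these orbit segments by using the structure of the induced map. Recall that $T(n)$ is the first return map of $T$ to $I(n) = [0,\lambda^*(n))$, and from \eqref{partition} the tower decomposition
$$[0,\lambda^*) = \bigsqcup_{\alpha \in \mathcal A}\bigsqcup_{i=0}^{Q_\alpha(n)-1} T^i(I_\alpha(n))$$
holds, where the floors $T^i(I_\alpha(n))$, $0\le i<Q_\alpha(n)$, are disjoint intervals on each of which $T^i$ is a translation (an isometry with no discontinuity in the interior). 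This is the main structural fact I will exploit.

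For the forward direction, fix $\alpha\in\mathcal A'$, so that $p_\alpha(n)$ is a genuine left endpoint of the interval $I_\alpha(n)$ with a different interval $I_\beta(n)$ sitting immediately to its left (here I use admissibility and the definition of $\mathcal A'$; one has to check that $p_\alpha(n)\notin\{0\}$ does not cause trouble, but $\pi_t^{(n)}(\alpha)>1$ is exactly what guarantees a neighbor on the left). I would argue that the orbit $p_\alpha(n), T p_\alpha(n),\dots, T^{Q_\alpha(n)-1}p_\alpha(n)$ cannot remain, floor by floor, strictly in the interior of the tower floors together with its left neighbor: if for every $i<Q_\alpha(n)$ the point $T^i p_\alpha(n)$ were an interior point of some floor $T^i(I_{\alpha'}(n))$ AND $T$ were continuous at $T^ip_\alpha(n)$, then by induction the whole interval $[p_\alpha(n)-\varepsilon, p_\alpha(n))$ for small $\varepsilon$ would be carried by $T^i$ alongside $p_\alpha(n)$, contradicting the fact that $p_\alpha(n)$ and points just to its left lie in different towers (different $\alpha$), which must separate before returning to $I(n)$. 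More precisely: let $i_0$ be the first time $T^{i_0}$ applied to a small left-neighborhood of $p_\alpha(n)$ fails to be an isometry, i.e.\ the first time a discontinuity of $T$ lands on the orbit; such an $i_0$ exists with $0\le i_0<Q_\alpha(n)$ because otherwise $T^{Q_\alpha(n)}$ would be a translation on a neighborhood of $p_\alpha(n)$, but $T(n)=T^{Q_\alpha(n)}$ near $p_\alpha(n)^+$ jumps there (it sends $p_\alpha(n)$ to $q_\alpha(n)$, the left endpoint of $T(n)(I_\alpha(n))$, and admissibility forces a genuine jump). Then $T^{i_0}(p_\alpha(n))\in D(T)$.

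For the converse, let $p\in D(T) = D(T(0))$. By the tower decomposition, $p$ lies in some floor: either $p = T^i q$ with $q$ in the interior of $I_\alpha(n)$ and $0\le i<Q_\alpha(n)$, or $p$ is itself of the form $T^i(p_\alpha(n))$. In the first case, $T^i$ is a translation on a neighborhood of $q$, hence $T$ would be continuous at $T^i q = p$, contradicting $p\in D(T)$ — unless the neighborhood of $q$ hits the boundary of a floor under some $T^j$, $j<i$, which again pushes us back to an earlier discontinuity; iterating, we reach the situation $p = T^i(p_\alpha(n))$ with $\alpha\in\mathcal A'$ and $0\le i<Q_\alpha(n)$. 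I should also note $\alpha\in\mathcal A'$ and not merely $\alpha\in\mathcal A$: if $\pi_t^{(n)}(\alpha)=1$ then $p_\alpha(n)=0$ and $T^i(0)$ for $i<Q_\alpha(n)$ stays inside a single tower, so $T^i$ is an isometry near $0$ and $T^i(0)$ is a discontinuity only if $0$ "is" one, which by convention we can include or exclude; I will align the bookkeeping with the definition of $\Delta(T)$, which already treats $0$ and $1$ as marked points.

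The main obstacle will be the careful bookkeeping of the ``first time a discontinuity of $T$ lands on the orbit'' argument — making precise that the breakpoints of the piecewise-isometry $T^i$ restricted to $I_\alpha(n)$ are exactly the points whose forward orbit up to time $i$ meets $D(T)$, and that these interact correctly with the combinatorics of the Rauzy–Veech induction (in particular that each $p_\alpha(n)$, $\alpha\in\mathcal A'$, produces a discontinuity and no two distinct $(\alpha,i)$, $(\alpha',i')$ with $i<Q_\alpha(n)$, $i'<Q_{\alpha'}(n)$ produce the same point of $D(T)$, giving a clean bijection). Everything else is a direct unwinding of \eqref{partition} and the fact that $T$ is an isometry off its $d-1$ discontinuities.
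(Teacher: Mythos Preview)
Your approach is genuinely different from the paper's. The paper argues by induction on $n$ along the Rauzy--Veech algorithm: for $n=0$ both statements are trivial; passing from $n$ to $n+1$ one just checks that in the bottom case (winner $\beta$ with $\pi_b^{(n)}(\beta)=d$, loser $\alpha$ with $\pi_t^{(n)}(\alpha)=d$) one has $T^{Q_\beta(n)}(p_\alpha(n+1))=p_\alpha(n)$ and $Q_\alpha(n+1)=Q_\alpha(n)+Q_\beta(n)$, while all other $p_{\alpha'}(n+1)=p_{\alpha'}(n)$ with $Q_{\alpha'}(n+1)=Q_{\alpha'}(n)$; in the top case every $p_\alpha(n+1)=p_\alpha(n)$. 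Both directions of the lemma then carry over mechanically. No continuity or tower-boundary analysis is needed.

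Your tower argument for the \emph{converse} is essentially right in spirit, although the sentence ``$T^i$ is a translation on a neighborhood of $q$, hence $T$ would be continuous at $T^iq=p$'' is not the correct implication: what you need is that $T$ (not $T^i$) is a translation on each floor $T^i(I_\alpha(n))$, hence continuous at interior points of floors; this forces $p\in D(T)$ to be a left endpoint $T^i(p_\alpha(n))$, and Keane's property then excludes the tower over $p_\sigma(n)=0$.

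The \emph{forward} direction, however, has a real gap. Your contradiction ``otherwise $T^{Q_\alpha(n)}$ would be a translation on a neighborhood of $p_\alpha(n)$, but $T(n)=T^{Q_\alpha(n)}$ jumps there'' does not close: on the left of $p_\alpha(n)$ the return map $T(n)$ equals $T^{Q_\beta(n)}$ (where $\beta$ is the top-left neighbor), not $T^{Q_\alpha(n)}$, so a jump of $T(n)$ says nothing about continuity of $T^{Q_\alpha(n)}$ from the left. Your alternative phrasing (``points just to the left must separate before returning to $I(n)$'') is closer, but carrying it out forces case distinctions on whether $Q_\beta(n)\lessgtr Q_\alpha(n)$ and runs into boundary cases (e.g.\ $q_\alpha(n)=0$ when $\pi_b^{(n)}(\alpha)=1$, or $T^{Q_\beta(n)}(p_\alpha(n))=\lambda^*(n)$ when $\pi_b^{(n)}(\beta)=d$) where the left-neighborhood argument breaks down on $[0,1)$. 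These can be handled, but not with the one-line contradiction you propose. The paper's inductive proof bypasses all of this by never appealing to continuity at all.
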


\begin{proof}
We will prove the statement by induction. 
Both statements are trivial if $n=0$.
Assume that the lemma holds for $n>1$.
Let $\alpha$ and $\beta$ be such that
$$\pi_t^{(n)} (\alpha) = d, \quad \pi_b^{(n)} (\beta) = d.$$

If $T$ is of bottom type, i.e., $\beta$ is the winner,
then $ \lambda_\beta (n) > \lambda_\alpha (n)$.
Then $$ T^{Q_\beta (n)} (p_\alpha (n+1) ) = p_\alpha (n).$$
Since  $$ Q_\alpha (n+1) = Q_\alpha (n) + Q_\beta (n),$$
the lemma holds for $n+1$.

If $T$ is of top type, then $p_\alpha (n+1) = p_\alpha (n)$ for all $\alpha \in \mathcal A$,
so the lemma holds for $n+1$.
\end{proof}

\begin{lemma}\label{lemmalr2}
If $0 < n \le \min_{\alpha \in \mathcal A} A_\alpha (k)$, then we have
\begin{equation*}
\begin{split} 
D (T^n) &\subset \bigcup_{\alpha \in \mathcal A'} \{ T^i (p_\alpha (m_k)) : - A_\beta (k) \le i < A_\alpha (k) \text{ where } p_\alpha(m_k) \in T(m_k) ( I_\beta (m_k) ) \} \\
&= \bigcup_{p \in D(T(m_k)^2)} \{ T^i ( p ) : 0  \le i < A_\alpha (k), \text{ where } p \in I_\alpha (m_k) ) \}.
\end{split}
\end{equation*}
\end{lemma}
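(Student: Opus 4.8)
The plan is to bound the discontinuity points of $T^n$ by tracking how iterates of $T$ partition the interval $I$ relative to the refined partition coming from the Marmi–Moussa–Yoccoz cocycle at level $k$. The starting point is Lemma~\ref{lemmalr}, which gives an exact description of $D(T)$ as the union over $\alpha\in\mathcal A'$ of the forward orbit segments $\{T^i(p_\alpha(m_k)): 0\le i< Q_\alpha(m_k)\}=\{T^i(p_\alpha(m_k)): 0\le i< A_\alpha(k)\}$ (recalling $Q_\alpha(m_k)=A_\alpha(k)$). The key observation for $T^n$ is: $x\in D(T^n)$ iff at least one of $x,Tx,\dots,T^{n-1}x$ lies in $D(T)$, i.e. $T^{-j}x\in D(T)$ for some $0\le j<n$. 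Hence $D(T^n)\subset\bigcup_{j=0}^{n-1}T^{j}\bigl(D(T)\bigr)$, and combining with the description of $D(T)$ gives $D(T^n)\subset\bigcup_{\alpha\in\mathcal A'}\{T^{i}(p_\alpha(m_k)): -(n-1)\le i< A_\alpha(k)\}$.

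**Next I would** convert the lower index bound $-(n-1)$ into the cleaner bound $-A_\beta(k)$ stated in the lemma by using the hypothesis $n\le\min_{\alpha}A_\alpha(k)$. Indeed, if $p_\alpha(m_k)\in T(m_k)(I_\beta(m_k))$, then by the construction of the induced map, $T^{-j}(p_\alpha(m_k))$ for $0\le j< Q_\beta(m_k)=A_\beta(k)$ sweeps through exactly the tower over $I_\beta(m_k)$ before hitting $p_\beta(m_k)$, which is where continuity under $T(m_k)$ could first fail; since $n-1<n\le\min_\alpha A_\alpha(k)\le A_\beta(k)$, the backward orbit of length $n-1$ from $p_\alpha(m_k)$ stays within that tower, so writing $T^{-j}(p_\alpha(m_k))=T^{i}(p_\alpha(m_k))$ with $i=-j$, the index $i$ satisfies $-A_\beta(k)\le i< A_\alpha(k)$. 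This establishes the first inclusion.

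**For the equality** in the second line, I would reinterpret the orbit segments as forward towers over elements of $D(T(m_k)^2)$. The point $T^{-A_\beta(k)+1}(p_\alpha(m_k))$ is the "bottom" of the combined backward/forward strip; more precisely, $p_\beta(m_k)\in D(T(m_k))$ and $T(m_k)(p_\beta(m_k))=p_\alpha(m_k)$ means that the relevant union over $i$ with $-A_\beta(k)\le i<A_\alpha(k)$ is precisely the forward orbit under $T$ of length $A_\beta(k)+A_\alpha(k)$ starting at $T^{-A_\beta(k)+1}(p_\alpha(m_k))$, and this base point can be described as $T^{i'}(p_\beta(m_k))$ for suitable $i'$; more economically, since $D(T(m_k)^2)=D(T(m_k))\cup T(m_k)^{-1}(D(T(m_k)))$ consists exactly of the points $p_\beta(m_k)$ together with their preimages, each such $p$ lies in some $I_\alpha(m_k)$, and the partition (\ref{partition}) applied to the level-$m_k$ map shows the two displayed unions coincide. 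I would spell this out by matching indices carefully using that $Q_\alpha(m_k)=A_\alpha(k)$ and that the towers $\{T^i(I_\alpha(m_k)):0\le i<A_\alpha(k)\}$ tile $I$.

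**The main obstacle** I anticipate is the bookkeeping in the index-matching for the equality: one must be careful that a point $p\in D(T(m_k)^2)$ may equal $p_\alpha(m_k)$ itself (in $D(T(m_k))$) or a proper preimage, and in the latter case the forward orbit segment $\{T^i(p):0\le i<A_\alpha(k)\}$ with $p\in I_\alpha(m_k)$ must be reconciled with the two-sided segments over the $p_\gamma(m_k)$'s without double counting or omission. The hypothesis $n\le\min_\alpha A_\alpha(k)$ is exactly what guarantees the backward orbit of length $n-1$ does not escape a single tower, so both the length bounds and the tower-assignment $p\in I_\alpha(m_k)$ are well-defined; verifying this containment rigorously (rather than the off-by-one-prone heuristic above) is where the real care is needed. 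Everything else is a direct application of Lemma~\ref{lemmalr} and the combinatorics of the Rauzy–Veech induction already recalled in Section~\ref{sec_background}.
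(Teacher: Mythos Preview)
Your approach matches the paper's: use Lemma~\ref{lemmalr} to describe $D(T)$, then $D(T^n)=\bigcup_{j=0}^{n-1}T^{-j}(D(T))$ together with the hypothesis $n\le\min_\alpha A_\alpha(k)$ to get the inclusion, and finish via $D(T(m_k)^2)=D(T(m_k))\cup T(m_k)^{-1}(D(T(m_k)))$ for the equality. One sign slip to fix: from ``$T^jx\in D(T)$ for some $0\le j<n$'' you obtain $x\in T^{-j}(D(T))$, so the union should be $\bigcup_{j=0}^{n-1}T^{-j}(D(T))$, not $\bigcup_{j=0}^{n-1}T^{j}(D(T))$---this is what actually yields the range $-(n-1)\le i<A_\alpha(k)$ that you then correctly invoke.
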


\begin{proof}
By Lemma~\ref{lemmalr}, if $p \in D(T)$, then 
we have $p  = T^i (p_\alpha (m_k) ) $ for some $\alpha \in \mathcal A'$, $p_\alpha (m_k) >0$ and $i$ such that $0 \le i <  A_\alpha (k)  $.

Since $ D(T^n) = D(T) \cup T^{-1} (D(T)) \cup \dots \cup T^{-(n-1)} (D(T))$,
if $p \in D(T^n)$, then we have $p  = T^i (p_\alpha (m_k) ) $ for some $\alpha$ and $i$ such that $ -n +1 \le i < A_\alpha (k) $.
From the assumption
$ n \le \min_\beta A_\beta (k)$ we have the inclusion and
since the discontinuity point of $T(m_k)^2$ is either the discontinuity point of $T(m_k)$ or the preimage of them,
we complete the proof.
\end{proof}

\begin{lemma}\label{lemmalr3}
If $\ell > k$ satisfies $$ \lambda^* (m_{\ell})< \lambda_\sigma (m_k), \text{ where } \pi_t^{(m_k)} (\sigma) = 1,$$  
then we have
$$ \min_{\alpha \in \mathcal A} \lambda_\alpha (m_{\ell}) \le \Delta \left( T(m_k)^2 \right) .$$
\end{lemma}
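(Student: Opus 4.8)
The plan is to bound $\Delta(T(m_k)^2)$ from below by the length of the shortest subinterval of $T(m_\ell)$, by showing that the Rokhlin tower partition of $I(m_k)$ built over the subintervals $I_\alpha(m_\ell)$ refines the partition of $I(m_k)$ into continuity intervals of $T(m_k)^2$. The single point at which the hypothesis is used is the following: since $\pi_t^{(m_k)}(\sigma)=1$ we have $I_\sigma(m_k)=[0,\lambda_\sigma(m_k))$, so the smallest discontinuity of $T(m_k)$ is its right endpoint, i.e.\ $\min D(T(m_k))=\lambda_\sigma(m_k)$; the assumption $\lambda^*(m_\ell)<\lambda_\sigma(m_k)$ then gives $I(m_\ell)\subset I_\sigma(m_k)$, hence every discontinuity $p_\alpha(m_\ell)$ of $T(m_\ell)$ satisfies $p_\alpha(m_\ell)<\lambda_\sigma(m_k)$ and therefore $p_\alpha(m_\ell)\notin D(T(m_k))$.

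Next I would introduce
$$\mathcal P=\bigl\{\,T(m_k)^i\bigl(I_\alpha(m_\ell)\bigr):\alpha\in\mathcal A,\ 0\le i<Q_\alpha(m_k,m_\ell)\,\bigr\}.$$
By the form of (\ref{partition}) relative to $T(m_k)$ and its induced map $T(m_\ell)$, this is a partition of $I(m_k)$ into intervals, and since $T(m_k)^i$ is a translation on $I_\alpha(m_\ell)$ each piece has length $\lambda_\alpha(m_\ell)$ for some $\alpha$; in particular the shortest piece of $\mathcal P$ has length $\min_{\alpha}\lambda_\alpha(m_\ell)$. I claim the endpoints of the pieces of $\mathcal P$ contain $D(T(m_k))\cup T(m_k)^{-1}\!\bigl(D(T(m_k))\bigr)\cup\{0,\lambda^*(m_k)\}$. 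Indeed $0$ and $\lambda^*(m_k)$ are the endpoints of $I(m_k)$, hence endpoints of $\mathcal P$-pieces. By Lemma~\ref{lemmalr} applied to $T(m_k)$ in place of $T$ (and $T(m_\ell)$ in place of $T(n)$), each $p\in D(T(m_k))$ can be written $p=T(m_k)^i(p_\alpha(m_\ell))$ with $0\le i<Q_\alpha(m_k,m_\ell)$, and the points $T(m_k)^i(p_\alpha(m_\ell))$ — with $p_\alpha(m_\ell)=0$ for the leftmost letter — are precisely the left endpoints of the pieces of $\mathcal P$; this settles $D(T(m_k))$. Moreover $i\ge 1$ in such a representation, since $i=0$ would force $p=p_\alpha(m_\ell)\in D(T(m_k))$, contradicting the previous paragraph; hence $T(m_k)^{-1}(p)=T(m_k)^{i-1}(p_\alpha(m_\ell))$ is again a left endpoint of a $\mathcal P$-piece, which settles $T(m_k)^{-1}(D(T(m_k)))$.

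To finish, recall $D(T(m_k)^2)\subseteq D(T(m_k))\cup T(m_k)^{-1}(D(T(m_k)))$. Thus the partition of $I(m_k)$ into continuity intervals of $T(m_k)^2$ — whose breakpoints, together with $0$ and $\lambda^*(m_k)$, all lie among the endpoints of the pieces of $\mathcal P$ — is coarser than $\mathcal P$, so each of its intervals is a union of pieces of $\mathcal P$ and has length at least $\min_\alpha\lambda_\alpha(m_\ell)$. Since $\Delta(T(m_k)^2)$ is the length of the shortest such interval, this yields $\min_{\alpha\in\mathcal A}\lambda_\alpha(m_\ell)\le\Delta(T(m_k)^2)$.

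The step needing the most care is the treatment of $T(m_k)^{-1}(D(T(m_k)))$, and this is exactly where the hypothesis $\lambda^*(m_\ell)<\lambda_\sigma(m_k)$ is essential: without it a discontinuity $p_\alpha(m_\ell)$ of $T(m_\ell)$ could itself be a genuine discontinuity of $T(m_k)$, and then its $T(m_k)$-preimage would lie outside $I(m_\ell)$, misaligned with the floors of the towers making up $\mathcal P$, and the refinement claim would break. The remaining ingredients — transporting (\ref{partition}) and Lemma~\ref{lemmalr} to the induced map $T(m_k)$, and the inclusion $D(T(m_k)^2)\subseteq D(T(m_k))\cup T(m_k)^{-1}(D(T(m_k)))$ — are routine.
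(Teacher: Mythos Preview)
Your proof is correct and follows essentially the same route as the paper: both arguments use Lemma~\ref{lemmalr} (applied to $T(m_k)$) to write each $p\in D(T(m_k))$ as $T(m_k)^i(p_\alpha(m_\ell))$, invoke the hypothesis $\lambda^*(m_\ell)<\lambda_\sigma(m_k)$ to force $i\ge 1$ so that $T(m_k)^{-1}(p)$ is again a tower-floor endpoint, and then conclude via the Rokhlin tower partition that the continuity intervals of $T(m_k)^2$ all have length at least $\min_\alpha\lambda_\alpha(m_\ell)$. Your write-up is in fact more explicit than the paper's about the refinement step, but the underlying idea is the same.
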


\begin{proof}
Choose $p$ be a discontinuity point of $T(m_k)$.
Then $p = T(m_k)^i (q)$, $0 \le i < A_\alpha(k,{\ell})$ for some $q = p_\alpha (m_\ell) \in D \left( T(m_\ell) \right)$.  
From the the assumption $ \lambda^* (m_\ell) < \lambda_\alpha (m_k)$ we have $p \ne q$ and $1 \le i < A_\alpha(k, \ell)$. 
Therefore, if $p \in D \left( T(m_k)^2 \right) = D \left( T(m_k) \right) \cup T(m_k)^{-1} \left( D \left( T(m_k) \right) \right)$, then 
$p =  T(m_k)^i (q)$ for some $q \in D \left( T(m_\ell) \right)$ with $0 \le i < A_\alpha(k, \ell) $.
Since the minimum distance among $T(m_k)^i (q)$, $0 \le i < A_\alpha(k, \ell) $ is $\Delta \left( T(m_k)^2 \right)$, which completes the proof.
\end{proof}

\begin{lemma}[\cite{MMY1}, p.835]\label{22}
If $T$ satisfies Condition (A), then
$$
\max_{\alpha \in \mathcal A} \lambda_\alpha (m_k) \le C_\varepsilon \min_{\alpha \in \mathcal A} \lambda_\alpha (m_k) \cdot \| A(k)\|^\varepsilon.
$$
\end{lemma}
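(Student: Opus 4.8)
The plan is to show that Condition (A) forces the lengths $\lambda_\alpha(m_k)$ to be comparable up to a factor polynomial in $\|A(k)\|$, and the natural bridge is the matrix $A(k,k+r)$ with $r=\max(2d-3,2)$, which by Lemma~\ref{MMYlemma} has all entries positive.

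First I would fix $\ell=k+r$ and use the relation $\lambda(m_k)=\lambda(m_\ell)\,A(k,\ell)$ from (\ref{lambdaQ}). Reading it entrywise, $\lambda_\alpha(m_k)=\sum_{\beta}\lambda_\beta(m_\ell)\,A_{\beta\alpha}(k,\ell)$. Since every $A_{\beta\alpha}(k,\ell)$ is a positive integer, hence $\ge 1$, each such sum is $\ge\sum_\beta\lambda_\beta(m_\ell)=\lambda^*(m_\ell)$, so $\min_\alpha\lambda_\alpha(m_k)\ge\lambda^*(m_\ell)$. On the other hand $\max_\alpha\lambda_\alpha(m_k)\le\lambda^*(m_k)=\sum_\beta\lambda_\beta(m_\ell)\big(\sum_\alpha A_{\beta\alpha}(k,\ell)\big)\le\|A(k,\ell)\|\,\lambda^*(m_\ell)$. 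Combining the two, $\max_\alpha\lambda_\alpha(m_k)\le\|A(k,k+r)\|\cdot\min_\alpha\lambda_\alpha(m_k)$, and it remains to bound $\|A(k,k+r)\|$ by a small power of $\|A(k)\|$.

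For this, note that $A(k,k+r)=A(k+r-1,k+r)\cdots A(k,k+1)$ because the corresponding Rauzy paths concatenate, and that the norm $\|\cdot\|$ is submultiplicative on nonnegative matrices, so $\|A(k,k+r)\|\le\prod_{i=0}^{r-1}\|A(k+i,k+i+1)\|$. Applying Condition (A) with a parameter $\varepsilon'$ bounds each factor: $\|A(k+i,k+i+1)\|\le C_{\varepsilon'}\|A(k+i)\|^{\varepsilon'}$. To convert $\|A(k+i)\|$ into a power of $\|A(k)\|$ I would iterate the inequality $\|A(j+1)\|\le\|A(j,j+1)\|\,\|A(j)\|\le C_{\varepsilon'}\|A(j)\|^{1+\varepsilon'}$, which comes from $A(j+1)=A(j,j+1)A(j)$ together with submultiplicativity; doing this for $0\le i\le r-1$ gives $\|A(k+i)\|\le C_{\varepsilon',i}\|A(k)\|^{(1+\varepsilon')^i}$. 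Substituting back, $\|A(k,k+r)\|\le C_{\varepsilon'}'\|A(k)\|^{\varepsilon'\sum_{i=0}^{r-1}(1+\varepsilon')^i}=C_{\varepsilon'}'\|A(k)\|^{(1+\varepsilon')^r-1}$. Since $r$ depends only on $d$ and $\|A(k)\|\ge 1$, given the target $\varepsilon$ of the statement one picks $\varepsilon'>0$ with $(1+\varepsilon')^r-1\le\varepsilon$ and obtains $\max_\alpha\lambda_\alpha(m_k)\le C_\varepsilon\min_\alpha\lambda_\alpha(m_k)\,\|A(k)\|^{\varepsilon}$, as claimed.

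The main point requiring care is precisely this last conversion: Condition (A) controls $\|A(j,j+1)\|$ only by a power of $\|A(j)\|$, not of $\|A(k)\|$, so one is forced to iterate it $r=\max(2d-3,2)$ times and the exponent inflates from $\varepsilon'$ to $(1+\varepsilon')^r-1$. Because $r$ is a fixed constant independent of $k$, this inflated exponent still tends to $0$ with $\varepsilon'$, which is exactly what makes the argument go through; the accompanying bookkeeping of the constants $C_{\varepsilon',i}$, $C_{\varepsilon'}'$ is routine and I would not dwell on it.
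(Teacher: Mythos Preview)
Your argument is correct and is essentially the standard one from \cite{MMY1}: use the positivity of $A(k,k+r)$ from Lemma~\ref{MMYlemma} to sandwich all $\lambda_\alpha(m_k)$ between $\lambda^*(m_{k+r})$ and $\|A(k,k+r)\|\lambda^*(m_{k+r})$, then iterate Condition~(A) $r$ times to bound $\|A(k,k+r)\|$ by $C_\varepsilon\|A(k)\|^\varepsilon$. The paper itself does not reproduce a proof but simply cites \cite{MMY1}, p.~835, so there is nothing further to compare.
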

Combining with (\ref{QQQ}), we have
\begin{equation}\label{222}
\frac{1}{\| A(k) \|} \le \max_{\alpha \in \mathcal A} \lambda_\alpha (m_k)
\le C_\varepsilon \min_{\alpha \in \mathcal A} \lambda_\alpha (m_k) \cdot \| A(k)\|^\varepsilon.
\end{equation}

\begin{theorem}
If $T$ satisfies condition (A), then it also satisfies Condition (D).
\end{theorem}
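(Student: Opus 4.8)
The plan is to bound $\Delta(T^n)$ from below for every $n\ge 1$ by choosing the index $k$ appropriately and tracking the discontinuity points of $T^n$ through the renormalization. First I would fix $n$ and choose $k$ as large as possible subject to $n\le\min_{\alpha\in\mathcal A}A_\alpha(k)$; equivalently, $k$ is chosen so that $\min_\alpha A_\alpha(k)$ is comparable to $n$, while $\min_\alpha A_\alpha(k+1)$ is already $\ge n$ but the jump from level $k$ to $k+1$ is controlled by Condition (A) (which by \eqref{222} forces $\|A(k+1)\|\le C_\varepsilon\|A(k)\|^{1+\varepsilon'}$, hence $\min_\alpha A_\alpha(k+1)$ is only polynomially larger than $\min_\alpha A_\alpha(k)$, so $\min_\alpha A_\alpha(k)\ge c_\varepsilon n^{1-\varepsilon}$ for this $k$). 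With this choice Lemma~\ref{lemmalr2} applies: every point of $D(T^n)$ is of the form $T^i(p)$ with $p\in D(T(m_k)^2)$ and $0\le i<A_\alpha(k)$ for the appropriate $\alpha$.

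Next I would estimate the minimal distance among such points $T^i(p)$. Because $T(m_k)$ is the first-return map of $T$ on $I(m_k)$, and by the partition formula \eqref{partition} the intervals $T^i(I_\alpha(m_k))$, $0\le i<Q_\alpha(m_k)=A_\alpha(k)$, are pairwise disjoint, the images $T^i(p)$ for the relevant $i$ fall into disjoint towers over $I(m_k)$. Two such images are either separated by the dynamics within a single tower — in which case the relevant scale is the minimal distance between the finitely many discontinuity points of $T(m_k)^2$ inside $I(m_k)$, i.e.\ $\Delta(T(m_k)^2)$ — or they sit in different copies $T^i(I_\alpha(m_k))$ of the renormalized interval. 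In the latter case the floors of the towers are, by minimality/Keane, already $\delta$-separated for some $\delta$ that I can bound below by some $\lambda_\beta(m_\ell)$ at a slightly deeper level $\ell>k$, via Lemma~\ref{lemmalr3}. The key point is that Condition (A) makes all of these quantities — $\Delta(T(m_k)^2)$, $\min_\alpha\lambda_\alpha(m_k)$, $\min_\alpha\lambda_\alpha(m_\ell)$ — comparable, up to a factor $\|A(k)\|^\varepsilon$, to $1/\|A(k)\|$ by \eqref{222}.

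Putting these together: $\Delta(T^n)\ge c\cdot\min\bigl(\Delta(T(m_k)^2),\ \min_\alpha\lambda_\alpha(m_\ell)\bigr)\ge C_\varepsilon\|A(k)\|^{-1-\varepsilon}$. Finally I would convert $\|A(k)\|$ into $n$: since $n\ge\min_\alpha A_\alpha(k)$ trivially gives one inequality, and the choice of $k$ with Condition (A) gives $\min_\alpha A_\alpha(k)\ge c_\varepsilon n^{1-\varepsilon}$, together with $\|A(k)\|\le d\max_\alpha A_\alpha(k)\le C_\varepsilon \min_\alpha A_\alpha(k)\cdot\|A(k)\|^\varepsilon$ (again \eqref{222} combined with $\|A(k)\|=\sum_\beta\lambda_\beta(m_k)^{-1}$-type comparisons), one gets $\|A(k)\|\le C_\varepsilon n^{1+\varepsilon}$. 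Hence $\Delta(T^n)\ge C_\varepsilon n^{-1-\varepsilon}$, which is Condition (D) (after relabeling $\varepsilon$).

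I expect the main obstacle to be the second step: carefully justifying that the only two ways two discontinuity images $T^i(p),T^j(p')$ can be close are "same tower, distance $\ge\Delta(T(m_k)^2)$" or "different towers, distance $\ge$ a renormalized interval length," and that no cancellation makes them closer. This requires using the disjointness of the towers in \eqref{partition} together with the fact that $i<A_\alpha(k)$ keeps each image inside its own tower, so that the vertical structure of the Rokhlin tower over $I(m_k)$ is respected — together with Lemma~\ref{lemmalr} to identify exactly which points occur. The bookkeeping tying the level $\ell$ of Lemma~\ref{lemmalr3} back to $k$ (so that $\lambda^*(m_\ell)<\lambda_\sigma(m_k)$ holds while $\ell-k$ stays bounded in terms of $\varepsilon$) is the other delicate point, and is again handled by the polynomial control on consecutive norms coming from Condition (A).
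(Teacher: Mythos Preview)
Your approach is essentially the paper's, and the ingredients you list (Lemma~\ref{lemmalr2}, Lemma~\ref{lemmalr3}, \eqref{222}, and the norm control from Condition~(A)) are exactly the ones used. Two points where your outline diverges from the cleaner argument in the paper are worth flagging.

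First, the ``same tower / different tower'' dichotomy is unnecessary and, as stated, not quite right. Adjacent tower floors $T^i(I_\alpha(m_k))$ and $T^j(I_\beta(m_k))$ share an endpoint, so they are not $\delta$-separated; minimality alone gives you nothing here. The correct observation is that the endpoints of every tower floor are images of points of $D(T(m_k))\subset D(T(m_k)^2)$, so the set $S$ from Lemma~\ref{lemmalr2} partitions $[0,1)$ into intervals each of length $\ge\Delta(T(m_k)^2)$ (each is an isometric copy of a gap in $D(T(m_k)^2)\cup\{0,\lambda^*(m_k)\}$). This yields $\Delta(T^n)\ge\Delta(T(m_k)^2)$ in one stroke, with no case split. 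Lemma~\ref{lemmalr3} is then used for a different purpose than you suggest: it bounds $\Delta(T(m_k)^2)$ itself from below by $\min_\alpha\lambda_\alpha(m_\ell)$.

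Second, you say $\ell-k$ should ``stay bounded in terms of $\varepsilon$,'' but in fact one takes $\ell=k+r$ with the \emph{fixed} constant $r=\max(2d-3,2)$ from Lemma~\ref{MMYlemma}; positivity of $A(k,k+r)$ forces $\lambda^*(m_{k+r})<\min_\alpha\lambda_\alpha(m_k)$, so the hypothesis of Lemma~\ref{lemmalr3} is satisfied. The same Lemma~\ref{MMYlemma} also gives the comparison $\min_\alpha A_\alpha(k+r)>\max_\beta A_\beta(k)\ge\|A(k)\|/d$, which is what lets you convert $\|A(k)\|$ back to $n$ at the end. With these two adjustments your sketch becomes precisely the paper's proof.
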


\begin{proof}
For each positive integer $n$ we have $k$ such that 
\begin{equation}\label{nak} \min_{\alpha \in \mathcal A} A_\alpha (k -1) < n \le \min_{\alpha \in \mathcal A} A_\alpha (k).\end{equation}
Then by Lemma~\ref{lemmalr2} and (\ref{partition}) we have
\begin{equation}\label{44} 
\Delta (T^{n}) \ge \Delta \left( T(m_k)^2 \right).\end{equation}
By Lemma~\ref{MMYlemma} 
there is a constant $r = \max(2d-3,2)$ such that 
$$ A_{\alpha\beta}(k, k+r) > 0  \text{ for all } \alpha, \beta \in \mathcal A,$$
which implies that
$$ \lambda^* (m_{k+r}) < \min_{\alpha \mathcal A} \lambda_\alpha (m_k).$$  
Therefore, by Lemma~\ref{lemmalr3} and (\ref{44}), we have
$$ \Delta (T^{n}) \ge \Delta \left( T(m_k)^2 \right) 
\ge \min_{\alpha \in \mathcal A} \lambda_\alpha (m_{k+r}). $$

By the definition of Condtion (A) for any $\varepsilon >0$ we can choose a constant $C_\varepsilon$ such that
\begin{equation}\label{331}
\| A(k+r+1) \| <  C_\varepsilon \| A(k) \|^{1+\varepsilon}. \end{equation}
By Lemma~\ref{MMYlemma} we have 
\begin{equation}\label{333}
\begin{split}
\min_{\alpha \in \mathcal A} A_\alpha (k+r) &= \min_{\alpha \in \mathcal A} \left( \sum_{\beta \in \mathcal A} A_{\alpha \beta} (k, k+r) A_\beta (k) \right) \\
&> \max_{\beta \in \mathcal A} A_\beta (k) \ge \frac 1d \| A(k) \| > \frac{C_\varepsilon}{d} \| A(k+r+1) \|^{1/(1+\varepsilon)}.
\end{split}
\end{equation}
Hence, we have for some constants $C'_\varepsilon$ and $C'_\varepsilon$
\begin{align*}
\Delta (T^{n}) &\ge \min_{\alpha \in \mathcal A} \lambda_\alpha (m_{k+r})
\ge C_\varepsilon \| A(k+r) \|^{-(1+\varepsilon)} &\text{by (\ref{222}),} \\
&\ge C'_\varepsilon \| A(k) \|^{-(1+\varepsilon)^2} &\text{by (\ref{331}),} \\
&> C''_\varepsilon \left( \min_{\alpha \in \mathcal A} A_\alpha (k-1) \right)^{-(1+\varepsilon)^3}
&\text{by (\ref{333})}, \\ 
&> C''_\varepsilon n^{-(1+\varepsilon)^3} &\text{by (\ref{nak})}.
\end{align*}
\end{proof}

Now we prove the other direction.

We have
$$\sum_{\alpha , \beta \in \mathcal A} \lambda_\alpha (m_{k+1}) A_{\alpha \beta} (k,k+1) = \lambda^* (m_k)$$
so
$$ \min_\alpha \lambda_\alpha (m_{k+1}) \cdot \| A(k,k+1) \| <  \lambda^* (m_k)
< \max_\alpha \lambda_\alpha (m_{k+1}) \cdot \| A(k,k+1) \| . $$

\begin{lemma}\label{nnnlemma}
Suppose that $T$ does not satisfy Condition (A). 
Then for some $r > 0$ there are infinitely many $k$ such that
$$ \min_{\alpha \in \mathcal A} \lambda_{\alpha}(m_k) < \lambda^* (m_k)^{1 + r}.$$
\end{lemma}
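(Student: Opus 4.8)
The natural route is contraposition via the failure of Condition (A). Since $T$ does not satisfy (A), there is an $\varepsilon_0>0$ and infinitely many indices $k$ with $\|A(k,k+1)\|>\|A(k)\|^{\varepsilon_0}$ (otherwise the ratio $\|A(k,k+1)\|/\|A(k)\|^{\varepsilon_0}$ would be bounded over all $k$, contradicting the negation of (A) for this $\varepsilon_0$). I claim that, with $r:=\varepsilon_0/(2+\varepsilon_0)$, each such $k$ produces an index in $\{k,k+1\}$ at which the asserted inequality holds; since this assignment is at most $2$-to-$1$, it yields infinitely many good indices.

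The first step is to combine the displayed inequality preceding the lemma, $\min_\alpha\lambda_\alpha(m_{k+1})\,\|A(k,k+1)\|\le\lambda^*(m_k)$, with the estimate (\ref{QQQ}) (recall $\lambda^*=1$), which gives $\|A(k)\|\ge 1/\max_\alpha\lambda_\alpha(m_k)\ge 1/\lambda^*(m_k)$. For every $k$ in our infinite set this yields
$$ \min_{\alpha\in\mathcal A}\lambda_\alpha(m_{k+1})\ \le\ \frac{\lambda^*(m_k)}{\|A(k,k+1)\|}\ <\ \frac{\lambda^*(m_k)}{\|A(k)\|^{\varepsilon_0}}\ \le\ \lambda^*(m_k)^{1+\varepsilon_0}. $$
The difficulty is that this bound is expressed through $\lambda^*(m_k)$, not through $\lambda^*(m_{k+1})$, and a single step of the Marmi--Moussa--Yoccoz algorithm can shrink the total length by a large factor; so I would split according to whether $\lambda^*(m_{k+1})\ge\lambda^*(m_k)^{1+\varepsilon_0/2}$ or not. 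In the first case $\lambda^*(m_k)\le\lambda^*(m_{k+1})^{2/(2+\varepsilon_0)}$, and the bound above becomes $\min_\alpha\lambda_\alpha(m_{k+1})<\lambda^*(m_{k+1})^{(2+2\varepsilon_0)/(2+\varepsilon_0)}=\lambda^*(m_{k+1})^{1+r}$, so the index $k+1$ works.

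In the remaining case $\lambda^*(m_{k+1})<\lambda^*(m_k)^{1+\varepsilon_0/2}$ I would use the defining property of the MMY acceleration: not all letters are winners along $\gamma(m_k,m_{k+1})$, so some $\sigma\in\mathcal A$ is never a winner there. Since an elementary Rauzy--Veech step alters only the length of its winner (by the formulas defining $\mathcal V(T)$), one has $\lambda_\sigma(m_{k+1})=\lambda_\sigma(m_k)$, whence
$$ \min_{\alpha\in\mathcal A}\lambda_\alpha(m_k)\ \le\ \lambda_\sigma(m_k)\ =\ \lambda_\sigma(m_{k+1})\ \le\ \lambda^*(m_{k+1})\ <\ \lambda^*(m_k)^{1+\varepsilon_0/2}\ \le\ \lambda^*(m_k)^{1+r}, $$
using $\lambda^*(m_k)<1$ and $r<\varepsilon_0/2$; so here $k$ itself works. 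The one genuinely non-formal observation is this last case: a sharp contraction of the induced interval can occur only because the letter that survives the block --- the one never chosen as a winner --- is already tiny compared with $\lambda^*(m_k)$. The rest is routine manipulation of (\ref{QQQ}) and of the relation $\lambda(m_k)=\lambda(m_{k+1})A(k,k+1)$, and collecting the two cases over the infinitely many bad $k$ finishes the proof.
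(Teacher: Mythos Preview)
Your proof is correct and follows essentially the same strategy as the paper: both arguments isolate infinitely many $k$ with $\|A(k,k+1)\|>\|A(k)\|^{\varepsilon_0}$, exploit the letter $\sigma$ that is never a winner on $\gamma(m_k,m_{k+1})$ (hence $\lambda_\sigma(m_k)=\lambda_\sigma(m_{k+1})$), combine this with $\min_\alpha\lambda_\alpha(m_{k+1})\,\|A(k,k+1)\|\le\lambda^*(m_k)$ and (\ref{QQQ}), and split into two cases yielding a good index at time $k$ or $k+1$. The only cosmetic difference is the dichotomy itself: the paper splits on whether $\lambda_\sigma(m_k)\sqrt{\|A(k,k+1)\|}\lessgtr\lambda^*(m_k)$, whereas you split on whether $\lambda^*(m_{k+1})\lessgtr\lambda^*(m_k)^{1+\varepsilon_0/2}$, which is arguably the more geometrically transparent formulation of the same alternative.
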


\begin{proof}
For each $k$ let $\alpha(k) \in \mathcal A$, depending on $k$, be the letter which is not taken as the winner of the arrows in the path $\gamma(k,k+1)$.
Then
$$ \lambda_\alpha (m_k) = \lambda_\alpha (m_{k+1}) .$$

Let $\varepsilon(k)$ be given by
$$\|A(k,k+1)\| = \|A(k) \|^{\varepsilon(k)}. $$
Now we have two cases:

Case (i) :
$\lambda_\alpha (m_k) \cdot \sqrt{\| A(k,k+1) \|} <  \lambda^* (m_k) $  \\
We have
\begin{equation}\label{casei}
\frac{ \lambda_\alpha (m_k) }{  \lambda^* (m_k)}  < \frac{1}{ \sqrt{\|A(k,k+1) \|} } = \frac{1}{  \|A(k) \|^{\varepsilon(k) /2} } <  \lambda^* (m_k)^{\varepsilon(k) /2}.
\end{equation}
The last inequality follows from (\ref{QQQ}). 
 
Case (ii) :
$\lambda_\alpha (m_k) \cdot \sqrt{\|A(k,k+1) \|} \ge  \lambda^* (m_k) $ \\
Since
$$\sum_{\alpha , \beta \in \mathcal A} \lambda_\alpha (m_{k+1}) A_{\alpha \beta} (k,k+1) = \lambda^* (m_k),$$
we have
$$ \min_\alpha \lambda_\alpha (m_{k+1}) \cdot \| A(k,k+1) \| <  \lambda^* (m_k) < \max_\alpha \lambda_\alpha (m_{k+1}) \cdot \| A(k,k+1) \| . $$
Thus, there is $\beta \in \mathcal A$ such that 
$$ \lambda_\beta (m_{k+1})  \cdot \|A(k,k+1) \| <  \lambda^* (m_k) \le
\lambda_\alpha (m_k) \cdot \sqrt{\|A(k,k+1) \|} .$$
Therefore, we have
$$ \lambda^* (m_{k+1}) > \lambda_\alpha (m_{k+1}) = \lambda_\alpha (m_k) > \lambda_\beta (m_{k+1})  \cdot \sqrt{\| A(k,k+1) \|}  $$
and
$$ \frac{\lambda_\beta (m_{k+1})}{ \lambda^* (m_{k+1})}  < \frac{1}{\sqrt{\| A (k,k+1) \|} }  = \frac{1}{  \|A(k) \|^{\varepsilon (k)/2} }.$$

Since $ \|A(k) \|^{1+\varepsilon(k)} = \|A(k,k+1)\| \cdot \|A(k) \| \ge \|A(k,k+1) A(k)\|  = \|A(k+1) \| , $  
we have
\begin{equation}\label{caseii}
\frac{\lambda_\beta (m_{k+1})}{ \lambda^* (m_{k+1})}  < \frac{1}{ \|A(k+1) \|^{\varepsilon /2 (1+\varepsilon)}} < \lambda^* (m_{k+1})^{\varepsilon /2 (1+\varepsilon)},
\end{equation}
where the last inequality is from (\ref{QQQ}).

Suppose that $T$ does not satisfy Condition (A).
Then $\limsup_{k} \varepsilon(k) > 0$.
The lemma then follows by applying inequalities 
(\ref{casei}) and (\ref{caseii}).
\end{proof}

\begin{lemma}\label{newlemma}
Let $\alpha \in \mathcal A$ be the winner of $\gamma(n-1,n)$ and the loser of $\gamma(n,n+1)$.
If $\lambda_\alpha (n) <  \lambda^* (n)^{1 + r}$, $r >0$ for large $n$, 
then there is an integer $s$, $1 \le s < d$, such that
$$ \Delta \left(T^{\lfloor 2/  \lambda^* (n)^{1+sr/d}\rfloor} \right) < (d-1)  \lambda^* (n)^{1+(s+1)r/d}. $$
\end{lemma}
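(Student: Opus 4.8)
The plan is to track a single discontinuity point of $T(n)$ — say $p = p_\alpha(n) \in D(T(n))$, the one associated to the letter $\alpha$ which just entered as a loser of $\gamma(n,n+1)$ — and count how many times $T$ (the original map) visits the short interval $I_\alpha(n)$, equivalently how many preimages of $D(T)$ accumulate near $p$ on the scale $\lambda_\alpha(n)$. Since $\alpha$ is the winner of $\gamma(n-1,n)$, the matrix relation $Q_\alpha(n-1) = Q_\alpha(n) + Q_\beta(n)$ (where $\beta$ is the loser of that arrow) together with Lemma~\ref{lemmalr} tells us that among the iterates $T^i(p_\alpha(n))$, $0 \le i < Q_\alpha(n)$, we land in $D(T)$ at least once; more importantly, once $\alpha$ becomes a loser at step $n$, the return time $Q_\alpha$ grows, and so the number of distinct points of $D(T^m)$ that sit inside a window of length $\lambda_\alpha(n)$ around $p$ is at least of order $Q_\alpha(n) \asymp 1/\lambda_\alpha(n) \gtrsim \lambda^*(n)^{-(1+r)}$ once $m$ is taken as the relevant return time. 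Since by (\ref{QQQ}) we have $\lambda^*(n) \asymp 1/\|Q(n)\|$ and the return times $Q_\alpha(n)$ range between $\min_\alpha \lambda_\alpha(n)^{-1}$-type and $\max_\alpha \lambda_\alpha(n)^{-1}$-type quantities, I would choose the threshold $m = \lfloor 2/\lambda^*(n)^{1+sr/d}\rfloor$ precisely so that $m$ is comfortably below $\min_\alpha Q_\alpha(n)$ (so Lemma~\ref{lemmalr2}-type control applies) yet large enough to force many discontinuities of $T^m$ into the short interval.

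The combinatorial heart is a pigeonhole step: I want to show that for some $s$ with $1 \le s < d$, the interval $I_\alpha(n)$ of length $\lambda_\alpha(n) < \lambda^*(n)^{1+r}$ contains at least two distinct points of $D(T^m)$ with $m = \lfloor 2/\lambda^*(n)^{1+sr/d}\rfloor$, at mutual distance $< (d-1)\lambda^*(n)^{1+(s+1)r/d}$. The idea is to run over the $d$ scales $\lambda^*(n)^{1+sr/d}$, $s = 0, 1, \dots, d$: at scale $s=0$ the window has length $\ge \lambda^*(n)$, which is the full-map scale and certainly contains discontinuities sparsely; at scale $s=d$ the window is as short as $\lambda^*(n)^{1+r}$, below $\lambda_\alpha(n)$. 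Between consecutive scales the number of $D(T^m)$-points forced into $I_\alpha(n)$ must jump (this is where $m$ growing from $\sim \lambda^*(n)^{-(1+sr/d)}$ to $\sim \lambda^*(n)^{-(1+(s+1)r/d)}$ multiplies the count by roughly $\lambda^*(n)^{-r/d} \to \infty$), so for some intermediate $s$ we get at least $2$ points, and then the window subdivided into at most $d-1$ pieces of length $\lambda^*(n)^{1+(s+1)r/d}$ each forces two of them within $(d-1)\lambda^*(n)^{1+(s+1)r/d}$ of each other — giving the claimed bound on $\Delta(T^m)$.

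Concretely the steps are: (1) fix $p = p_\alpha(n)$ and, using Lemma~\ref{lemmalr}, identify which iterates $T^i(p_\alpha(m_k))$-type points land near $p$; (2) use $Q_\alpha(n-1) = Q_\alpha(n)+Q_\beta(n)$ and the fact that $\alpha$ is now a loser to get a lower bound on the number of $D(T^m)$-points in $I_\alpha(n)$ in terms of $m$ and $\lambda_\alpha(n)$; (3) verify $m = \lfloor 2/\lambda^*(n)^{1+sr/d}\rfloor < \min_\alpha Q_\alpha(n)$ for the relevant $s$ using (\ref{QQQ}) and (\ref{QQ}), so that all these points are genuinely distinct discontinuities of $T^m$ sitting in a known finite segment of orbit; (4) perform the pigeonhole over $s = 1, \dots, d$ to extract the index $s$ and the pair of nearby discontinuities; (5) conclude $\Delta(T^m) < (d-1)\lambda^*(n)^{1+(s+1)r/d}$. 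The main obstacle I anticipate is step (2)–(3): making the counting of $D(T^m) \cap I_\alpha(n)$ genuinely quantitative — one needs that the $\asymp m$ iterates $T^{-j}(D(T))$, $0 \le j < m$, that fall inside the length-$\lambda_\alpha(n)$ window are actually distinct and that their number grows with $m$ at the right polynomial rate, which requires carefully relating $m$ to the return time $Q_\alpha(n)$ of the short interval rather than to the $\lambda^*$ scale, and using minimality (Keane's property) so that a segment of $m$ backward iterates of the finite set $D(T)$ hits every short interval the expected number of times. The rest is bookkeeping with the exponents $1 + sr/d$.
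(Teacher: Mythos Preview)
There is a genuine gap: your pigeonhole is on the wrong objects and the counting in step~(2) does not go through. First, step~(3) fails outright. From $\sum_\beta \lambda_\beta(n) Q_\beta(n)=1$ one has $\min_\beta Q_\beta(n)\le 1/\lambda^*(n)$, while for every $s\ge 1$ your $m=\lfloor 2/\lambda^*(n)^{1+sr/d}\rfloor$ exceeds $1/\lambda^*(n)$ once $n$ is large; hence $m>\min_\beta Q_\beta(n)$ and Lemma~\ref{lemmalr2}-type control is simply unavailable. Second, the claim that $I_\alpha(n)$ contains on the order of $Q_\alpha(n)$ points of $D(T^m)$ is unjustified and in fact goes the wrong way: $D(T^m)$ has at most $(d-1)m$ points in total, so a window of length $\lambda_\alpha(n)<\lambda^*(n)^{1+r}$ receives on average $(d-1)m\,\lambda_\alpha(n)\lesssim\lambda^*(n)^{(d-s)r/d}\ll 1$ of them for $s<d$. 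Nothing forces two discontinuities into $I_\alpha(n)$, and the ``jump'' pigeonhole over $s$ is never made precise (no pigeons, no holes, no origin for the factor $d-1$).

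The paper's argument runs along a different axis. The pigeonhole is applied to the \emph{lengths} $\lambda_\beta(n)$ of the $d$ letters: set $\mathcal A_i=\{\beta:\lambda^*(n)^{1+(i+1)r/d}\le\lambda_\beta(n)<\lambda^*(n)^{1+ir/d}\}$ for $0\le i< d$ and $\mathcal A_d=\{\beta:\lambda_\beta(n)<\lambda^*(n)^{1+r}\}$. Since $\mathcal A_0$ and $\mathcal A_d$ are both nonempty but there are only $d$ letters, some $\mathcal A_s$ with $1\le s<d$ is empty, splitting $\mathcal A$ into ``big'' letters ($\lambda_\beta(n)\ge\lambda^*(n)^{1+sr/d}$) and ``small'' ones ($\lambda_\beta(n)<\lambda^*(n)^{1+(s+1)r/d}$). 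One then goes \emph{backward} along the Rauzy--Veech path to the last step $m<n$ at which a big letter $\mu$ is the loser; the winner $\nu$ at that arrow is forced to be small (otherwise $\nu\ne\alpha$ would have to reappear as a big loser in $\gamma(m+1,n)$). After that arrow, $p_\nu(m+1)$ and $p_\mu(m+1)$ are \emph{adjacent} discontinuities of $T(m+1)$; the interval $I_\nu(m+1)$ between them is a union of at most $d-1$ intervals $I_\beta(n)$ with $\beta$ small, whence $|p_\mu(m+1)-p_\nu(m+1)|<(d-1)\lambda^*(n)^{1+(s+1)r/d}$. Both points lie in $D(T^{Q_\mu(m+1)})$, and $Q_\mu(m+1)=Q_\mu(m)+Q_\nu(m)<2/\lambda_\mu(m)\le 2/\lambda^*(n)^{1+sr/d}$ because $\mu$ is big and its length is unchanged between steps $m$ and $n$. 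The two close discontinuities are located explicitly from the combinatorics of the algorithm, not by a density count inside $I_\alpha(n)$.
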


\begin{proof}
Assume that $ \lambda^* (n)$ is small enough that $ \lambda^* (n)^{{r}/{d}} < 1/d$.

Let for $0 \le i < d $
$$
\mathcal A_i = \{ \beta \in \mathcal A : \lambda^* (n)^{1 + {(i+1)r}/{d}} \le \lambda_\beta (n) < \lambda^* (n)^{1 + {ir}/{d}} \}
$$
and
$$
\mathcal A_d = \{ \beta \in \mathcal A : \lambda_\beta (n) < \lambda^* (n)^{1 + r} \}.
$$
Then, by the assumption, $\alpha \in \mathcal A_d \ne \emptyset$.
Since there is an $\beta \in \mathcal A$ such that 
$\lambda_\beta (n) > \lambda^* (n)/{d} >  \lambda^* (n)^{1+{r}/{d}}$, neither $\mathcal A_0$ is an empty set.

Since there are $d$ elements in $\mathcal A$, there exist an $s$, $1 \le s < d$, such that $\mathcal A_s $ is empty.
Let 
$$
\mathcal A_{\textrm{big}} = \bigcup_{i=0}^{s-1} \mathcal A_i, \qquad
\mathcal A_{\textrm{small}} = \bigcup_{i=s+1}^{d} \mathcal A_i. 
$$
Both of $\mathcal A_{\textrm{big}}$ and $\mathcal A_{\textrm{small}}$ are nonempty.

Take $m$, $m < n$ be the smallest integer as 
no loser in $\gamma(m+1,n)$ belongs to $\mathcal A_{\textrm{big}}$. 
Put $\mu \in \mathcal A_{\textrm{big}}$ as the loser of the arrow $\gamma(m,m+1)$. 
Let $\nu$ be the winner of the arrow $\gamma(m,m+1)$.
Then $\nu \in \mathcal A_{\textrm{small}}$.
(if $\nu \in \mathcal A_{\textrm{big}}$, then $\nu \ne \alpha$ and $\nu$ should be a loser in $\gamma(m+1,n)$)

Hence we have
$\lambda_{\nu} (m+1) =  \lambda_{\nu} (m) - \lambda_{\mu} (m)$ and
\begin{align*}
Q_{\nu} (m+1) &= Q_{\nu} (m) < \frac{1}{\lambda_\nu (m)} < \frac{1}{\lambda_{\mu}(m)} \le \frac{1}{ \lambda^* (n)^{1+{sr}/{d}} }, \\
Q_{\mu} (m+1) &= Q_{\nu} (m) + Q_{\mu} (m)< \frac{1}{\lambda_\nu (m)} + \frac{1}{\lambda_{\mu}(m)} < \frac{2}{\lambda_{\mu}(m)} < \frac{2}{ \lambda^* (n)^{1+{sr}/{d}} }.
\end{align*}

There are two cases:

(i) $\pi_t^{(m)}(\mu ) = d$ and $\pi_b^{(m)}(\nu) = d$: \\
Then we have $\pi_t^{(m+1)}(\nu ) = \pi_t^{(m)}(\nu ) < d$ and 
$\pi_t^{(m+1)}(\mu ) = \pi_t^{(m)}(\nu ) +1$.

Since no letter in $\mathcal A_{\textrm{big}}$ is taken as the winner or the loser of the arrows of $\gamma(m+1,n)$, 
$$ I_\nu (m+1), \ I_\mu (m+1) \subset [0, \lambda^*(n))$$
and
$$ I_\nu (m+1) = \left[ p_\nu (m+1) , p_\mu (m+1) \right ).$$
Since $p_\nu (m+1)$ and $p_\mu (m+1)$ are discontinuity points of $T(n)$ and $\pi_b^{(m+1)}(\nu) = d$, $m+1 \le n$,
we have $$I_\nu (m+1) = \bigsqcup_{\beta \in \mathcal A'} I_\beta (n)  
\ \text{ for some } \mathcal A' \subset A_{\textrm{small}}.$$
Therefore, we have 
\begin{align*}
p_\mu (m+1)-p_\nu(m+1) &=\lambda_\nu(m+1) \\
&< | \mathcal A_{\textrm{small}} | \cdot \lambda^*(n)^{1+{(s+1)r}/{d}} 
\le (d-1)\lambda^*(n)^{1+{(s+1)r}/{d}}.
\end{align*}
Since 
\begin{align*}
p_\mu (m+1) &\in D \left( T^{Q_\mu(m+1)} \right) = D \left( T^{Q_\nu(m) + Q_\mu(m)} \right). \\
p_\nu (m+1) &\in D \left( T^{Q_\nu(m+1)} \right) = D \left( T^{Q_\nu(m) } \right),
\end{align*}
we have
$$ p_\mu (m+1) - p_\nu (m+1) \ge \Delta \left( T^{Q_\mu(m+1)} \right) .$$

(ii) $\pi_t^{(m)}(\nu ) = d$ and $\pi_b^{(m)}(\mu ) = d$: \\
Then we have $\pi_b^{(m+1)}(\nu ) = \pi_b^{(m)}(\nu ) < d$ and 
$\pi_b^{(m+1)}(\mu ) = \pi_b^{(m)}(\nu ) +1$.
Similarly with case (i), we have
$$q_\mu (m+1) - q_\nu (m+1) = \lambda_\nu (m+1) < (d-1) \lambda^*(n)^{1+{(s+1)r}/{d}}$$
Since 
\begin{align*}
q_\mu (m+1) &\in D \left( T^{-Q_\mu(m+1)} \right) = D \left( T^{-Q_\nu(m) - Q_\mu(m)} \right) ,\\
q_\nu (m+1) &\in D \left( T^{-Q_\nu(m+1)} \right) = D \left( T^{-Q_\nu(m) } \right) ,
\end{align*} we have
$$ q_\mu (m+1) - q_\nu (m+1) \ge \Delta \left ( T^{- Q_\mu(m+1)} \right )
= \Delta \left ( T^{Q_\mu(m+1)} \right ) .$$
Note that $\Delta(T) = \Delta(T^{-1})$.
\end{proof}

Now we have the following theorem for the opposite direction.

\begin{theorem}
If $T$ does not satisfy Condition (A), then $T$ does not satisfy Condition (D), neither.
\end{theorem}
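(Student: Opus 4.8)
The plan is to establish this directly — note it is the contrapositive of the missing half ``(D)$\Rightarrow$(A)'' — by channelling the failure of Condition~(A) into Lemma~\ref{newlemma} through Lemma~\ref{nnnlemma}. Concretely, the goal is to produce a fixed exponent $\theta_0>0$ and a sequence of integers $N\to\infty$ with
$$\Delta(T^{N})<4(d-1)\,N^{-(1+\theta_0)},$$
which contradicts Condition~(D): applying its definition with $\varepsilon=\theta_0/2$ would force $\Delta(T^{N})\ge C_{\theta_0/2}N^{-(1+\theta_0/2)}$ for every $N$, impossible once $4(d-1)N^{-\theta_0/2}<C_{\theta_0/2}$.

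First I would record two elementary facts about the Rauzy path. (i) Lengths are non-increasing along the path, a loser's length is unchanged by its arrow, and along any maximal block of consecutive arrows all won by the same letter $\alpha$ the quantities $\lambda^*$ and $\lambda_\alpha$ decrease by exactly the same amount; moreover $\lambda^*(n)\to 0$. (ii) Immediately after a maximal block of arrows won by $\alpha$, the next arrow has $\alpha$ as its loser: when $\alpha$ ceases to win, the move type switches and $\alpha$ — still occupying position $d$ of the row it no longer wins — becomes the loser. Fact~(ii) is precisely the device that upgrades ``$\alpha$ is short at some time'' to ``$\alpha$ wins $\gamma(n-1,n)$ and loses $\gamma(n,n+1)$'', which is what Lemma~\ref{newlemma} demands.

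Now the bridge. By Lemma~\ref{nnnlemma} there are $r>0$ and infinitely many $k$ at which some letter has length $<\lambda^*(m_k)^{1+r}$ at time $m_k$ (resp.\ at time $m_{k+1}$ in its Case (ii)); by the pigeonhole principle, after passing to a subsequence I may assume this is always the same letter $\alpha$, so that $\lambda_\alpha(m)<\lambda^*(m)^{1+r}$ for a sequence of MMY-times $m\to\infty$. To choose the index $n$ for Lemma~\ref{newlemma}: if $\alpha$ does not win $\gamma(m-1,m)$, let $n$ be one plus the largest index $t<m$ for which $\alpha$ wins $\gamma(t,t+1)$; then the block ending at $t=n-1$ is maximal, so by~(ii) $\alpha$ loses $\gamma(n,n+1)$, and since $\alpha$ wins nothing on $[n,m]$ we get $\lambda_\alpha(n)=\lambda_\alpha(m)<\lambda^*(m)^{1+r}\le\lambda^*(n)^{1+r}$. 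If instead $\alpha$ wins $\gamma(m-1,m)$, let $j\ge m-1$ be the last index of the block containing that arrow and set $n=j+1$; then $\alpha$ wins $\gamma(n-1,n)$ and, by~(ii), loses $\gamma(n,n+1)$, and along $[m,n]$ the winner is $\alpha$, so $\lambda^*$ and $\lambda_\alpha$ drop by the same amount $\le\lambda_\alpha(m)<\lambda^*(m)^{1+r}$; hence $\lambda^*(n)>\tfrac12\lambda^*(m)$, and since $\lambda^*(n)\to 0$ absorbs the factor $2^{1+r}$ into the exponent, $\lambda_\alpha(n)<\lambda^*(n)^{1+r/2}$. In either case $n\to\infty$ (the relevant $\alpha$-winning block recedes to infinity as $m\to\infty$), so $\lambda^*(n)\to0$, and Lemma~\ref{newlemma} applies with a fixed exponent $\rho\in\{r,r/2\}$. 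I expect this step — especially the second case, where one must check that $\lambda^*$ has not already collapsed far below $\lambda^*(m)$ — to be the main obstacle; it is exactly where the lockstep decrease of $\lambda^*$ and $\lambda_\alpha$ inside a winning block earns its keep.

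Finally I would unwind the conclusion of Lemma~\ref{newlemma}. With $L=\lambda^*(n)$ and $N=\lfloor 2/L^{1+s\rho/d}\rfloor$ for the resulting $s\in\{1,\dots,d-1\}$, one has $L^{1+s\rho/d}\le 2/N$, hence
$$\Delta(T^{N})<(d-1)\,L^{1+(s+1)\rho/d}=(d-1)\bigl(L^{1+s\rho/d}\bigr)^{1+\frac{\rho}{d+s\rho}}\le(d-1)(2/N)^{1+\frac{\rho}{d+s\rho}}<4(d-1)\,N^{-(1+\theta_0)},$$
where $\theta_0=\rho/\bigl(d+(d-1)\rho\bigr)>0$ bounds $\rho/(d+s\rho)$ from below uniformly over $s\in\{1,\dots,d-1\}$, and $2^{1+\rho/(d+s\rho)}<4$ because $\rho/(d+s\rho)<1$. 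Letting $k\to\infty$ makes $N\to\infty$, and the displayed estimate contradicts Condition~(D). The remaining work — verifying Fact~(ii), the pigeonholing, and the elementary exponent bookkeeping — is routine.
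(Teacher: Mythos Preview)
Your proposal is correct and follows the paper's route: invoke Lemma~\ref{nnnlemma} to get a short letter at infinitely many MMY times, manufacture an index $n$ at which that letter wins $\gamma(n-1,n)$ and loses $\gamma(n,n+1)$ with $\lambda_\alpha(n)<\lambda^*(n)^{1+\rho}$, and feed this to Lemma~\ref{newlemma}. Your final exponent bookkeeping is fine.

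The only real difference is in the bridge. You fix $\alpha$ by pigeonhole and then split into two cases, the second (``$\alpha$ wins $\gamma(m-1,m)$'') forcing you to move \emph{forward} along the current winning block and to control the decay of $\lambda^*$ there. The paper avoids this entirely: it lets $\alpha$ vary with $k$ and always takes it to be the letter realising $\min_\beta\lambda_\beta(m_k)$. Since the shortest letter cannot be the winner of the next arrow $\gamma(m_k,m_k+1)$, your Case~2 never arises, and the last $\alpha$-win before $m_k$ automatically satisfies $m_{k-1}\le n\le m_k$ (because every letter wins in $\gamma(m_{k-1},m_k+1)$), giving $n\to\infty$ without appeal to Keane's property. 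This buys a cleaner one-case argument with no loss of exponent; your version works too but spends effort on a situation that a sharper choice of $\alpha$ eliminates.
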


\begin{proof}
By Lemma~\ref{nnnlemma} we have $r > 0$ and infinitely many $k$ and $\alpha$ (depending on $k$) satisfying 
$$ \lambda_{\alpha}(m_k)= \min_{\beta \in \mathcal A} \lambda_{\beta}(m_k) < \lambda^* (m_k)^{1 + r}.$$

Let $\ell_k(\alpha) = \max \{n \le m_k: \alpha \text{ is the winner of } \gamma(n-1,n)\}$. 
Then $\alpha$ is the loser of $\gamma ( \ell_k(\alpha), \ell_k(\alpha)+1)$  
or $\alpha$ is the winner of $\gamma ( m_k, m_k+1)$, $m_k = \ell_k(\alpha)$.

By the definition of the Marmi-Moussa-Yoccoz acceleration sequence $m_k$, the winner of $\gamma(m_k-1,m_k)$ and the winner of $\gamma(m_k,m_k +1)$ are different.
Hence, if we put $n = \ell_k(\alpha)$, then $\alpha$ is the winner of $\gamma (n-1, n)$ and the loser of $\gamma (n, n+1)$  
and 
$$ \lambda_{\alpha}(n) = \lambda_{\alpha}(m_k) < \lambda^* (m_k)^{1 + r} \le \lambda^* (n)^{1 + r}.$$

Since $ \lambda_{\alpha}(m_k)= \min_{\beta \in \mathcal A} \lambda_{\beta}(m_k)$,
$\alpha$ cannot be the winner of $\gamma(m_k,m_k +1)$. 
Thus $\alpha$ should be the winner of an arrow in $\gamma(m_{k-1},m_k)$, which yields
$$ m_{k-1} \le \ell_k(\alpha) = n \le m_k.$$ 
Hence, we can choose infinitely many $n$'s satisfying the condition for Lemma~\ref{newlemma}, which completes the proof. 
\end{proof}

\section{Condition (D) implies Condition (U)}\label{DtoU}

In this section we investigate the relation between Condition (D) and Condition (U).

\begin{lemma}\label{returndist}
If $\tau_{r} (x) = n$ for some $x$,  then we have 
$\Delta(T^{2n}) < r $.
\end{lemma}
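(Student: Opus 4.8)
The plan is to exhibit two distinct points of $D(T^{2n})\cup\{0,1\}$ at distance less than $r$, so that $\Delta(T^{2n})<r$ follows. The key observation is that $T^n$ is itself an interval exchange map, so $x$ belongs to a unique maximal interval of continuity $J=[a,b)$ of $T^n$, on which $T^n$ acts as the translation $y\mapsto y+c$ with $c=T^nx-x$. By hypothesis $d(T^nx,x)<r$, so $|c|<r$; also $c\ne 0$, since $T$ (hence $T^n$) has no periodic points. Both endpoints of $J$ lie in $D(T^n)\cup\{0,1\}$, and since $D(T^{2n})=\bigcup_{i=0}^{2n-1}T^{-i}(D(T))$ contains $D(T^n)=\bigcup_{i=0}^{n-1}T^{-i}(D(T))$, they lie in $D(T^{2n})\cup\{0,1\}$.

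First, if $|J|=b-a\le|c|$, then $a$ and $b$ already are two distinct points of $D(T^{2n})\cup\{0,1\}$ with $b-a\le|c|<r$, and we are done.

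Next, suppose $|J|>|c|$, and assume $c>0$ (when $c<0$ one argues symmetrically, using the left endpoint $a$ of $J$ and the point $a-c=a+|c|$ in place of $b$ and $b-c$). Since $T^n$ is a bijection of $I=[0,1)$, the image $T^n(J)=[a+c,b+c)$ is contained in $I$, which forces $b+c\le 1$ and hence $b<1$; being the right endpoint of a maximal continuity interval of $T^n$ and an interior point of $I$, $b$ is a genuine discontinuity of $T^n$, so $b\in D(T^n)\subseteq D(T^{2n})$. On the other hand, since $|J|>c$, the point $b-c$ lies in the open interval $(a,b)\subseteq J$, so $T^n$ is continuous at $b-c$ and $T^n(b-c)=b$. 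Picking $i$ with $0\le i\le n-1$ and $T^i(b)\in D(T)$ (possible because $b\in D(T^n)$), we get $T^{n+i}(b-c)=T^i(b)\in D(T)$ with $0\le n+i\le 2n-1$, hence $b-c\in D(T^{2n})$. Thus $b$ and $b-c$ are distinct points of $D(T^{2n})$ at distance $c<r$, so $\Delta(T^{2n})\le c<r$.

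I expect the only real difficulty to be spotting the right pair of discontinuities. The obvious candidate --- the two endpoints $a$ and $b$ of $J$ --- is useless when $J$ is long; the correct pair is $b$ together with its preimage $b-c$ inside $J$, whose separation is precisely $|T^nx-x|$. The remaining steps are routine bookkeeping: verifying that $b$ is an honest discontinuity of $T^n$ rather than the endpoint $1$ of $I$ (which is where $T^n(J)\subseteq I$ is used), and ensuring $b-c$ falls strictly inside $J$, which is exactly what the case split $|J|\le|c|$ versus $|J|>|c|$ provides.
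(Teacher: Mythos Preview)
Your proof is correct and follows essentially the same strategy as the paper's: take the maximal continuity interval $[a,b)$ of $T^n$ containing $x$, split into the case where this interval is already short, and otherwise use that $T^n(b-c)=b$ (or $T^n(a-c)=a$) to produce a point of $T^{-n}(D(T^n))\subset D(T^{2n})$ at distance $|c|<r$ from $b$ (or $a$). The only differences are cosmetic: you split at $|J|\le|c|$ rather than $|J|<r$, and you spell out explicitly why $b<1$ when $c>0$, a point the paper leaves implicit.
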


\begin{proof}
Let $n = \tau_{r} (x)$ and $[a, b)$ be the maximal interval containing $x$ on which $T^{n}$ is continuous. 
Note that both $a$ and $b$ are 
either discontinuity points of $T^{n}$ or end points,
i.e., $a, b \in D (T^{n}) \cup \{ 0,1\}$.
If $b-a < r$, then the proof is completed.
Now assume that $b - a \ge r$.

Let $\delta =  T^{n} (x) - x$.  Clearly $|\delta| < r \le b-a$
and $T^{n} [a, b) = [a + \delta, b + \delta)$.
If $\delta >0$, then  $b - \delta \in [a, b)$. And  if $\delta <0$ then $a - \delta \in [a,b)$. 
Therefore, $T^n (b- \delta) = b \in D(T^n)$ or $T^n (a - \delta) = a \in D(T^n)$, 
yielding 
$$ b - \delta \text{ or } a - \delta \in T^{-n} (D(T^n)) \subset D(T^{2n}). $$
Hence, we have 
$$ \Delta(T^{2n}) \le | \delta | < r.$$
\end{proof}

\begin{theorem}
Condition (D) implies Condition (U)
\end{theorem}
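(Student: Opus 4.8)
The plan is to prove the contrapositive, or rather to directly bound $\tau_r(x)$ from both sides using Condition (D). Recall we must show that under Condition (D), $\lim_{r\to 0}\frac{\log\tau_r(x)}{-\log r}=1$ uniformly in $x$. The upper bound $\limsup_{r\to 0}\frac{\log\tau_r(x)}{-\log r}\le 1$ is the substantive half; the lower bound is essentially free, as I explain below. So the core of the argument is: assuming Condition (D), for every $\varepsilon>0$ there is $r_\varepsilon>0$ such that $\tau_r(x)\le r^{-(1+\varepsilon)}$ for all $x$ and all $0<r<r_\varepsilon$.

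For the upper bound I would argue by contradiction using Lemma~\ref{returndist}. Suppose $\tau_r(x)=n$ for some $x$; then Lemma~\ref{returndist} gives $\Delta(T^{2n})<r$. On the other hand, Condition (D) says $\Delta(T^{2n})\ge C_\varepsilon(2n)^{-(1+\varepsilon)}$. Combining, $C_\varepsilon(2n)^{-(1+\varepsilon)}<r$, i.e. $n>\tfrac12\bigl(C_\varepsilon/r\bigr)^{1/(1+\varepsilon)}$ — wait, that is a lower bound on $n$, not an upper bound, so the naive application goes the wrong way. The fix: I should instead fix $r$ and let $n$ be \emph{any} integer with $n\le \frac12\bigl(C_\varepsilon r^{-1}\bigr)^{1/(1+\varepsilon)}$ (roughly $n\approx r^{-1/(1+\varepsilon)}$); for such $n$, $\Delta(T^{2n})\ge C_\varepsilon(2n)^{-(1+\varepsilon)}\ge r$, so by the contrapositive of Lemma~\ref{returndist} no point $x$ can have $\tau_r(x)=n$. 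Since this holds for every such $n$, and $\tau_r(x)$ is always finite (minimality, from the Keane property), we get $\tau_r(x)> \frac12\bigl(C_\varepsilon r^{-1}\bigr)^{1/(1+\varepsilon)}$ for all $x$ — which again is a \emph{lower} bound. This tells me the direction of Lemma~\ref{returndist} only yields the lower bound $\liminf\ge \frac{1}{1+\varepsilon}$, hence $\liminf_{r\to 0}\frac{\log\tau_r(x)}{-\log r}\ge 1$ uniformly, which together with the trivial $\limsup\le 1$ would finish — but the trivial $\limsup\le 1$ is precisely what is \emph{not} obvious for interval exchange maps.

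So the real work is the opposite inequality, and here is where I expect the main obstacle. I would get it from the partition structure: by \eqref{partition}, $[0,1)$ is tiled by the $Q_\alpha(m_k)$ translates of the intervals $I_\alpha(m_k)$, and the return time of $T$ to $I(m_k)$ is bounded by $\|A(k)\|$ (up to a dimensional constant). The point $x$ lies in some $T^j(I_\alpha(m_k))$; within a ``tower'' the first return to a fixed $I_\alpha(m_k)$ happens within $\max_\alpha Q_\alpha(m_k)\le\|A(k)\|$ steps, and then the dynamics on $I(m_k)$ forces a return within a controlled distance. More precisely I would choose $k=k(r)$ to be the largest index with $\max_\alpha\lambda_\alpha(m_k)\ge r$ (so that the tower pieces have width comparable to $r$, meaning landing in the same column gives $d(T^{\tau}x,x)$ comparable to $r$), and then bound $\tau_r(x)$ by $2\max_\alpha Q_\alpha(m_k)\le C\|A(k)\|$. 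The final step is to convert Condition (D) into a polynomial bound on $\|A(k(r))\|$ in terms of $r$: since (D) $\Leftrightarrow$ (A) by the previous section, and (A) together with \eqref{222} controls $\|A(k+1)\|$ by $\|A(k)\|^{1+\varepsilon}$ while $\min_\alpha\lambda_\alpha(m_k)\approx\|A(k)\|^{-1-\varepsilon}$, the choice of $k(r)$ gives $\|A(k(r))\|\le r^{-(1+\varepsilon')}$ for any $\varepsilon'>0$ once $r$ is small. Hence $\tau_r(x)\le C r^{-(1+\varepsilon')}$ uniformly, giving $\limsup\le 1$. The delicate bookkeeping — and the step I expect to be hardest — is matching the scale $r$ to the correct renormalization level $k(r)$ so that ``landing in the same column'' genuinely forces $d(T^\tau x,x)<r$ while keeping $\tau$ bounded by $\|A(k(r))\|$; this requires using Lemma~\ref{22}/\eqref{222} to ensure all columns at level $k(r)$ have comparable width, so that one bad narrow column cannot spoil the uniformity.
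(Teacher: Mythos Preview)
Your proposal is correct and follows essentially the same two-part decomposition as the paper. For the lower bound $\liminf_{r\to 0}\frac{\log\tau_r(x)}{-\log r}\ge 1$ uniformly, the paper argues exactly as you do: if it fails, there are sequences $r_i\downarrow 0$, $x_i$ with $n_i=\tau_{r_i}(x_i)<r_i^{-t}$ for some $t<1$; Lemma~\ref{returndist} then gives $\Delta(T^{2n_i})<r_i<(2n_i)^{-1/t}\cdot 2^{1/t}$, contradicting Condition~(D) since $1/t>1$. Your self-correcting analysis lands on precisely this argument.

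For the upper bound $\limsup_{r\to 0}\frac{\log\tau_r(x)}{-\log r}\le 1$ uniformly, the paper does not redo the tower argument you sketch but simply invokes the equivalence (D)~$\Leftrightarrow$~(A) from Section~\ref{AandD} and then cites \cite{KimMarmi} (Proposition~3.5 and Theorem~3.6 there), where it is shown---implicitly uniformly---that under Condition~(A) the normalized return time is bounded above by a sequence tending to $1$. Your sketch via the Rokhlin tower structure, scale-matching $k(r)$, and the balance estimate \eqref{222} is exactly the content of that cited argument, and your flagged ``delicate bookkeeping'' is indeed where the work lies; the paper simply outsources it.
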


\begin{proof}
Suppose that $T$ with Condition (A) (equivalently (D)) does not satisfy Condition (U).
In \cite{KimMarmi}, it is implicitly shown (Proposition 3.5 and Theorem 3.6 in \cite{KimMarmi}) that
the normalized return time $\frac{ \log \tau_r (x)}{- \log r} $ is uniformly bounded by a sequence that converges to 1.
Assume that there is a sequence $r_i \downarrow 0$ and $x_i$ such that $\tau_{r_i} (x_i) < r_i^{-t}$ for some $t < 1$. 
Let $n_i = \tau_{r_i} (x_i) < r_i^{-t}$.
Then by Lemma~\ref{returndist} we have 
$$\Delta(T^{2n_i}) < r_i < \left( \frac{1}{n_i} \right)^{\frac 1t} = 2^{\frac 1t} \left( \frac{1}{2n_i} \right)^{\frac 1t},$$
which contradicts Condition (D).
\end{proof}

\section{Condition (U) implies Condition (Z)}\label{UtoZ}

In this section, we show that Condition (U) is stronger than Condition (Z).
Let $n_k$ be the sequence of Zorich's acceleration defined in Section~\ref{sec_background}.

\begin{lemma}\label{lem_utz}
If $n_{k+1} - n_k \ge d-1$, then for some $x$ and $r < \lambda^*(n_{k})$ we have 
$$ \frac{\log \tau_{r} (x)}{-\log r} 
< \frac{\log \| Z(k) \|}{\log ( \| Z(k,k+1) \|/d - 2 ) + \log \| Z(k) \| }. 
$$ 
\end{lemma}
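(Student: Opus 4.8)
The plan is to read off, from the structure of a single Zorich block $\gamma(n_k,n_{k+1})$, a point whose orbit returns extremely quickly to an extremely small neighbourhood of itself, and then feed the resulting numbers into the definition of $\tau_r$. Write $N=n_{k+1}-n_k$ and let $\alpha$ be the winner common to all arrows of the block. First I would record its standard structure: since no admissible pair has the same last letter on top and on bottom, the block consists entirely of top moves or entirely of bottom moves; assuming top moves, $\pi_t$ is constant along the block, every $\lambda_\beta$ with $\beta\ne\alpha$ and every $p_\beta$ is frozen, only $\lambda_\alpha$ decreases, and $Q_\alpha(n)\equiv Q_\alpha(n_k)$ on the block because $\alpha$ is never a loser. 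Moreover all the matrices $B_\gamma$ of the block have winner-column $\alpha$, so their product collapses (the cross terms vanish) to $Z(k,k+1)=\mathbb I+\sum_{j=1}^{N}E_{\mu_j\alpha}$, where $\mu_j$ is the loser of the $j$-th arrow; hence $\|Z(k,k+1)\|=d+N$, i.e.\ $\|Z(k,k+1)\|/d-2=N/d-1$.

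Next I would locate a frequently repeated loser. Among the $N$ arrows there are at most $d-1$ distinct losers, so some $\mu\ne\alpha$ is the loser of $m\ge N/(d-1)\ge N/d=\|Z(k,k+1)\|/d-1$ of them; let $j_1<\dots<j_m$ be the steps at which $\mu$ loses. Because $\lambda_\mu,p_\mu$ are frozen, $I_\mu(n_k)$ is a single subinterval of $I(n_k+t)$ for each $t$, and when $\mu_j=\mu$ the piece chopped off the right end of $I_\alpha$ at step $j$ is exactly $T(n_k+j-1)(I_\mu(n_k))$, of width $\lambda_\mu(n_k)$; all the chopped pieces of the block tile $[\lambda^*(n_{k+1}),\lambda^*(n_k))$, an interval of length $\lambda_\alpha(n_k)-\lambda_\alpha(n_{k+1})<\lambda_\alpha(n_k)$. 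Fix $x'\in I_\mu(n_k)$, let $z=T^{a}(x')$ with $a$ the first-return time of $T$ from $I_\mu(n_k)$ into $I(n_k+j_1-1)$; using $Q_\alpha(n_k+t)\equiv Q_\alpha(n_k)$ one checks that $T^{(t-1)Q_\alpha(n_k)}(z)$ is the first-return image of $x'$ into $I(n_k+j_t-1)$, for $t=1,\dots,m$. These $m$ points thus lie in the $m$ pairwise disjoint chopped pieces above, hence inside a single interval of length $<\lambda_\alpha(n_k)<1/Q_\alpha(n_k)$ (the last bound by (\ref{QQ})), and, being monotone in position, two consecutive ones in this list — whose time indices differ by exactly $Q_\alpha(n_k)$ — are at distance $<\frac{1}{(m-1)Q_\alpha(n_k)}\le\frac{1}{(\|Z(k,k+1)\|/d-2)\,Q_\alpha(n_k)}$, using $m-1\ge N/d-1$. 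Call $w$ the first of the two.

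Finally I would choose $r$ with $d\bigl(T^{Q_\alpha(n_k)}w,w\bigr)<r<\min\bigl(\tfrac{1}{(\|Z(k,k+1)\|/d-2)Q_\alpha(n_k)},\ \lambda^*(n_k)\bigr)$, a nonempty range since its left endpoint is $<\lambda_\alpha(n_k)<\lambda^*(n_k)$. Then $\tau_r(w)\le Q_\alpha(n_k)\le\|Z(k)\|$ and $-\log r>\log(\|Z(k,k+1)\|/d-2)+\log Q_\alpha(n_k)$, and since $t\mapsto\frac{\log t}{\log(\|Z(k,k+1)\|/d-2)+\log t}$ is increasing for $\|Z(k,k+1)\|/d-2>1$ while $Q_\alpha(n_k)\le\|Z(k)\|$,
$$ \frac{\log\tau_r(w)}{-\log r}<\frac{\log Q_\alpha(n_k)}{\log(\|Z(k,k+1)\|/d-2)+\log Q_\alpha(n_k)}\le\frac{\log\|Z(k)\|}{\log(\|Z(k,k+1)\|/d-2)+\log\|Z(k)\|}. $$
The hard part will be the geometric bookkeeping of the middle paragraph: establishing that a Zorich block is of a single type, that $Q_\alpha$ is frozen on it, and above all that the chopped pieces visited by the orbit of $z$ all fit inside an interval of length $<\lambda_\alpha(n_k)$ (which rests on $\lambda_\alpha$ losing precisely $\lambda_\alpha(n_k)-\lambda_\alpha(n_{k+1})$ to those pieces). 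I also expect to need a short separate remark for small blocks, where $\|Z(k,k+1)\|/d-2\le1$ and the right-hand side is $\ge1$, so that it suffices to exhibit any $x$ and $r<\lambda^*(n_k)$ with $\tau_r(x)<1/r$.
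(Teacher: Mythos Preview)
Your argument is correct, but the paper takes a substantially shorter route that avoids the orbit-tracking and pigeonhole entirely. The key observation you overlook is that the first-return map $T(n_k)$ itself, restricted to the \emph{winner's} interval $I_\alpha(n_k)$, is already a translation by the tiny amount $\sum_{\beta\in\mathcal A'}\lambda_\beta(n_k)$, where $\mathcal A'$ is the set of losers of the block. (In the top case $\pi_t^{(n_k)}(\alpha)=d$ and $\mathcal A'=\{\beta:\pi_b^{(n_k)}(\beta)>\pi_b^{(n_k)}(\alpha)\}$, so $I_\alpha$ sits at the right end of the top row and is carried just to the left of these letters in the bottom row.) Since the losers are cycled through and each $\beta\in\mathcal A'$ loses at least $h=\lfloor N/|\mathcal A'|\rfloor\ge 1$ times, with each loss subtracting $\lambda_\beta(n_k)$ from $\lambda_\alpha$, one gets $h\sum_{\beta\in\mathcal A'}\lambda_\beta(n_k)\le\lambda_\alpha(n_k)-\lambda_\alpha(n_{k+1})<\lambda_\alpha(n_k)$. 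Setting $r=\lambda_\alpha(n_k)/h$ therefore gives $|T(n_k)(x)-x|<r$ and hence $\tau_r(x)\le Z_\alpha(k)$ for \emph{every} $x\in I_\alpha(n_k)$; then $r<\lambda_\alpha(n_k)<1/Z_\alpha(k)$ feeds directly into the final inequality.

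Your approach, by contrast, starts from a \emph{loser's} interval $I_\mu$, follows a point through the $m$ chopped pieces it visits, and extracts a close pair by pigeonhole. The bookkeeping you outline (constancy of $Q_\alpha$ along the block, the identity $Q_\mu(n_k+j_t-1)=Q_\mu(n_k)+(t-1)Q_\alpha(n_k)$, monotonicity of the $z_t$ inside $[\lambda^*(n_{k+1}),\lambda^*(n_k))$) is sound, and the final monotonicity step replacing $Q_\alpha(n_k)$ by $\|Z(k)\|$ is the same one the paper uses. But you recover essentially the same numerical bound with considerably more machinery; the paper's observation turns the whole middle paragraph into two lines and, as a bonus, shows the inequality holds on an entire subinterval rather than at a single pigeonholed point. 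Your remark about small blocks (where $\|Z(k,k+1)\|/d-2\le 1$) is fair; the paper does not treat that edge separately either, since in the application the lemma is only invoked when $\|Z(k,k+1)\|$ is large.
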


\begin{proof}
Let $\alpha \in \mathcal A$ be the winner of the arrows and $\mathcal A'$ be the set of the losers of the arrows in the path $\gamma(n_k, n_{k+1})$.
If $\pi^{(n_k)}_t (\alpha) = d$, then 
$ \mathcal A' = \{ \beta \in \mathcal A : \pi^{(n_k)}_b (\beta) > \pi^{(n_k)}_b (\alpha) \}$
and $\pi^{(n)}_b$ is the cyclic permutation on $\mathcal A'$ for $n_k \le n \le n_{k+1}$.
For each $\beta \in \mathcal A'$ put $h_\beta = Z_{\beta \alpha}(k,k+1)$,
the number of arrows, of which loser is $\beta \in \mathcal A'$, in the path $\gamma(n_k, n_{k+1})$.
Put $$h := \left \lfloor \frac{n_{k+1} - n_k}{|\mathcal A'|} \right \rfloor \ge 1.$$ 
Then $h \le h_\beta \le h+1$ for all $\beta \in \mathcal A'$ and
\begin{equation}\label{hbound}
\| Z(k,k+1) \| = d + n_{k+1} - n_k \le d + (h+1) \cdot |\mathcal A'|
< d(h +2). \end{equation}
Let 
$$r := \frac{\lambda_{\alpha} (n_k)}{h}
> \frac{\lambda_{\alpha} (n_k) - \lambda_{\alpha} (n_{k+1})}{h}
= \frac{\sum_{\beta \in \mathcal A'} h_\beta \lambda_\beta(n_k)}{h}
\ge \sum_{\beta \in \mathcal A'} \lambda_\beta(n_k). $$
Since
$$T(n_k) ( x) = x - \sum_{\beta \in \mathcal A'} \lambda_\beta(n_k) \text{ on } x \in I_{\alpha} (n_k), $$
we have by (\ref{partition})
$$\tau_r (x) \le Z_{\alpha} (k) \text{ on } x \in I_{\alpha} (n_k).$$
Since $\lambda_{\alpha} (n_k) < 1/Z_\alpha (k)$ from (\ref{QQ}),
we have for $x \in I_{\alpha} (n_k)$
\begin{equation*}
\frac{\log \tau_{r} (x)}{-\log r} \le \frac{\log Z_{\alpha} (k)}{ \log h - \log \lambda_{\alpha} (n_k)} < \frac{\log Z_{\alpha} (k)}{ \log h + \log Z_{\alpha} (k)} \le \frac{\log \|Z (k) \| }{ \log h + \log \| Z (k)\|}. 
\end{equation*}
Therefore, by (\ref{hbound}), we have for $x \in I_{\alpha} (n_k)$
\begin{equation*}
\frac{\log \tau_{r} (x)}{-\log r} < \frac{\log \| Z(k) \|}{\log ( \| Z(k,k+1) \|/d - 2 ) + \log \| Z(k) \| }. 
\end{equation*}

If $\pi^{(n_k)}_b (\alpha) = d$, then 
we have the same bounds for $h$, $\lambda_\alpha (n_k)$ and 
$$T(n_k) (x) = x + \sum_{\beta \in \mathcal A'} \lambda_\beta(n_k) \text{ on } x \in I_{\alpha} (n_k). $$
Thus, we have the same inequality. 
\end{proof}

\begin{theorem}
Condition (U) implies Condition (Z)
\end{theorem}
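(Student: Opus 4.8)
The plan is to argue by contraposition: assuming $T$ fails Condition (Z), I will produce points $x$ and radii $r\downarrow 0$ for which the normalized return time $\log\tau_r(x)/(-\log r)$ is bounded away from $1$, thereby violating Condition (U). Lemma~\ref{lem_utz} already does almost all the work, so the task reduces to checking that the hypothesis $n_{k+1}-n_k\ge d-1$ of that lemma holds infinitely often along a subsequence where the Zorich cocycle grows too fast, and that the resulting upper bound on the normalized return time stays below $1$ by a definite margin.

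Concretely, I would proceed as follows. First, suppose Condition (Z) fails: then there is $\varepsilon_0>0$ and infinitely many $k$ with $\|Z(k,k+1)\| > \|Z(k)\|^{\varepsilon_0}$. Since $\|Z(k)\|\to\infty$ (each $\|Z(k)\|\ge\|Z(k-1)\|$ and $\|Z(k)\|\to\infty$ because $\lambda^*(n_k)\to 0$ together with \eqref{QQQ}), along this subsequence $\|Z(k,k+1)\|\to\infty$, and in particular $\|Z(k,k+1)\| = d + (n_{k+1}-n_k) \ge d-1$ eventually, so the hypothesis of Lemma~\ref{lem_utz} is satisfied for all large $k$ in the subsequence. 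Second, apply Lemma~\ref{lem_utz}: for each such $k$ there are $x_k$ and $r_k<\lambda^*(n_k)$ (so $r_k\to 0$) with
$$
\frac{\log\tau_{r_k}(x_k)}{-\log r_k}
< \frac{\log\|Z(k)\|}{\log\bigl(\|Z(k,k+1)\|/d - 2\bigr) + \log\|Z(k)\|}.
$$
Third, insert the lower bound $\|Z(k,k+1)\| > \|Z(k)\|^{\varepsilon_0}$ into the right-hand side. Writing $L_k=\log\|Z(k)\|\to\infty$, the denominator is at least $\log\bigl(\tfrac1d\|Z(k)\|^{\varepsilon_0} - 2\bigr) + L_k = \varepsilon_0 L_k + L_k + o(L_k)$, so the right-hand side is at most $\dfrac{L_k}{(1+\varepsilon_0)L_k + o(L_k)} \to \dfrac{1}{1+\varepsilon_0} < 1$. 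Hence $\limsup_{k} \log\tau_{r_k}(x_k)/(-\log r_k) \le 1/(1+\varepsilon_0) < 1$, which contradicts Condition (U) (uniform convergence of $\log\tau_r(x)/(-\log r)$ to $1$ would force this limit to be $1$). Therefore Condition (U) implies Condition (Z).

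The one genuine point requiring care — the main obstacle — is the asymptotic analysis of the fraction: one must confirm that the additive constants ($-2$ inside the log, the $+d$ in $\|Z(k,k+1)\|=d+(n_{k+1}-n_k)$, and the $\log d$) are all negligible compared with $\varepsilon_0 L_k$, so that the bound $1/(1+\varepsilon_0)$ survives with a strict margin; this is exactly why one wants $\|Z(k,k+1)\|\to\infty$ along the subsequence rather than just $\|Z(k,k+1)\|>\|Z(k)\|^{\varepsilon_0}$ for a fixed finite value. A secondary, minor point is justifying $\|Z(k)\|\to\infty$, which follows from minimality/Keane (the induced intervals $I(n_k)$ shrink to a point) together with \eqref{QQQ}. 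No new machinery beyond Lemma~\ref{lem_utz} is needed.
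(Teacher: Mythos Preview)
Your proposal is correct and follows essentially the same argument as the paper: contrapose, use the failure of (Z) to get $\|Z(k,k+1)\|\ge C\|Z(k)\|^{t}$ along a subsequence, invoke Lemma~\ref{lem_utz}, and observe that the resulting bound on $\log\tau_r(x)/(-\log r)$ tends to $1/(1+t)<1$. You are in fact slightly more careful than the paper in explicitly checking the hypothesis $n_{k+1}-n_k\ge d-1$ of Lemma~\ref{lem_utz} (via $\|Z(k,k+1)\|=d+(n_{k+1}-n_k)\to\infty$); the paper leaves this implicit.
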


\begin{proof}
Let $T$ be an interval exchange map without Condition (Z).
Then there are constants $t >0$ and $C$ such that for infinitely many $k$
\begin{equation}\label{notZ}
\| Z(k,k+1)\| \ge C \| Z(k) \|^t.
\end{equation}
Then for $k$ satisfying (\ref{notZ}),
 by Lemma~\ref{lem_utz}, there are $x$ and $r < \lambda^*(n_k)$ such that
\begin{equation*}
\frac{\log \tau_{r} (x)}{-\log r} 
< \frac{\log \| Z (k) \|}{ (1 + t) \log \| Z (k) \| + \log ( \frac Cd - 2\| Z (k) \|^{-t})}. 
\end{equation*}
Therefore we have sequences $\{x_i\}$ and $\{r_i\}$ such that $r_i \to 0$ and 
$$ \liminf_{i \to \infty} \frac{\log \tau_{r_i} (x_i)}{-\log r_i} \le \frac{1}{1+t} < 1,$$
which contradicts (U).
\end{proof}

\section{3-interval exchange maps}\label{3iem_sec}

In this section, we show that Condition (U), (R) and (Z) are equivalent for 3-interval exchange maps.
Let $T$ be a 3-interval exchange map with length data $(\lambda_A,\lambda_B,\lambda_C)$.
We may assume that $\pi_t(A) = 1, \pi_t(B) = 2, \pi_t(C) = 3$
and $\pi_b(C) = 3, \pi_b(B) = 2, \pi_b(A) = 1$.
Let $\lambda^* = \lambda(A) + \lambda(B) + \lambda(C) = 1$.

Define an irrational rotation $\bar T$ on
$ \bar I = [0, \lambda^*+\lambda_B )$
by
\begin{equation}\label{bart}
\bar T(x) = \begin{cases}
x + \lambda_B + \lambda_C, &\text{ if } x + \lambda_B + \lambda_C \in \bar I, \\
x + \lambda_B + \lambda_C - (\lambda^* + \lambda_B), &\text{ if } x + \lambda_B + \lambda_C \notin \bar I.
\end{cases}\end{equation}
Then $\bar T$ is a 2-interval exchange map (irrational rotation) with length data $(\lambda_{\bar A}, \lambda_{\bar C})$,
where $\lambda_{\bar A} = \lambda_A + \lambda_B$ and $\lambda_{\bar C} = \lambda_A + \lambda_C$.
Note that $T$ is the induced map of $\bar T$ on $[0, \lambda^*)$ and 
$T$ satisfies the Keane property if and only if the rotation $\bar T$ is irrational.

Let $\alpha = \frac{\lambda_B + \lambda_C}{\lambda^*+\lambda_B}$ be the rotation angle of $\bar T$
and let $a_k$ and $p_k/q_k$ be the partial quotients and partial convergents of $\alpha$.

\begin{lemma}[Denjoy-Koksma inequality (see \cite{Herman})]\label{Denjoy-Koksma}
Let $\bar T$ be an irrational rotation by $\alpha$ with partial quotient denominators $q_k$ and $f$ be a real valued function of bounded variation on the unit interval. 
Then for any $x$ we have 
$$
\left | \sum_{i=0}^{q_k-1} f (\bar T^i x) - q_k \int f d\mu \right| < \mathrm{var} (f).
$$
\end{lemma}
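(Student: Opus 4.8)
The plan is to prove the Denjoy–Koksma inequality in the classical way: reduce it, by a Stieltjes integration by parts, to a sharp combinatorial equidistribution estimate for a full period $q_k$ of the orbit, and then extract that estimate from the arithmetic of the continued fraction of $\alpha$.

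\emph{Step 1: reduction to a counting estimate.} Replacing $f$ by $\tilde f = f - \int f\,d\mu$ (which has the same variation) it suffices to bound $\sum_{i=0}^{q_k-1}\tilde f(\bar T^i x)$. Writing $\tilde f$ as a superposition of normalized indicators of sub-arcs against its Stieltjes derivative, i.e.\ $\tilde f(y)=\int_{\mathbb T}\bigl(\mathbf 1_{J_s}(y)-|J_s|\bigr)\,d(d\tilde f)(s)$ with total mass $\|d\tilde f\|=\mathrm{var}(f)$, one obtains
$$ \sum_{i=0}^{q_k-1}\tilde f(\bar T^i x)=\int_{\mathbb T}\Bigl(N_s(x)-q_k|J_s|\Bigr)\,d(d\tilde f)(s),\qquad N_s(x):=\#\{0\le i<q_k:\ \bar T^i x\in J_s\}. $$
So the whole statement follows from the uniform count estimate
$$ \bigl|N_s(x)-q_k|J_s|\bigr|<1\qquad\text{for every arc }J_s\text{ and every }x, $$
because then the integrand above is, pointwise, a function strictly less than $1$ in absolute value integrated against the nonzero measure $d(d\tilde f)$ (the non-trivial case being $\mathrm{var}(f)>0$), which forces the integral to be strictly smaller than $\|d\tilde f\|=\mathrm{var}(f)$.

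\emph{Step 2: the count estimate from the convergents.} By rotation-invariance we may take $J_s=[0,s)$. The key arithmetic input is that $q_k$ is a best-approximation denominator, so $\|m\alpha\|\ge\|q_{k-1}\alpha\|=:\beta_{k-1}$ for all $0<m<q_k$; hence the $q_k$ points $\bar T^i x$ are pairwise at distance $\ge\beta_{k-1}$, and by the three-distance theorem the $q_k$ arcs they cut out take only the two lengths $\beta_{k-1}$ (with multiplicity $q_k-q_{k-1}$) and $\beta_{k-1}+\beta_k$ (with multiplicity $q_{k-1}$), consistently with the identity $q_k\beta_{k-1}+q_{k-1}\beta_k=1$. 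Equivalently, one invokes the renormalization picture: the circle is the union of two Rokhlin towers for $\bar T$ over a common base, of heights $q_{k-1}$ and $q_k$, on which $\bar T$ acts by the stacking map, so any orbit segment of length $q_k$ meets the levels in a fully controlled way. With either description one lets $s$ run over $(0,1)$ and follows how $N_s(x)$ increments relative to $q_k s$: the running difference jumps up by $1$ each time the boundary of $[0,s)$ crosses an orbit point and decreases at rate $q_k$ between crossings, and the two-gap structure together with $q_k\beta_{k-1}+q_{k-1}\beta_k=1$ forces it to stay strictly inside $(-1,1)$, tending to $\pm1$ only in the limits $s\to0^+$ and $s\to1^-$ without attaining them.

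\emph{Main obstacle.} The only delicate point is the sharpness in Step 2: getting a bound $|N_s(x)-q_k s|\le C$ for some absolute $C$ is routine, but pinning down the strict inequality with constant exactly $1$ requires the precise two-gap multiplicities above and the relation $q_k\beta_{k-1}+q_{k-1}\beta_k=1$, with some bookkeeping at the values of $s$ where $[0,s)$ has an endpoint at an orbit point. Granting that, Steps 1 and 3 are formal and give $\bigl|\sum_{i=0}^{q_k-1}f(\bar T^i x)-q_k\int f\,d\mu\bigr|<\mathrm{var}(f)$.
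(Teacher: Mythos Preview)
The paper does not give its own proof of this lemma; it is simply quoted from Herman \cite{Herman} and used as a black box. So there is nothing in the paper to compare your argument against.

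That said, your outline is the classical proof: Koksma's inequality (your Step~1, i.e.\ the Abel/Stieltjes reduction to a discrepancy estimate for half-arcs) combined with the sharp discrepancy bound $\lvert N_{[0,s)}(x)-q_k s\rvert<1$ valid precisely at the convergent denominators $q_k$ (your Step~2). This is exactly Herman's argument, so in spirit your approach and the paper's reference coincide.

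A few small points. Your notation in Step~1 is loose: $J_s$ is never defined (presumably $J_s=[0,s)$), and ``$d(d\tilde f)$'' should just be the signed Stieltjes measure $d\tilde f$; the representation you want is $\tilde f(y)=\tilde f(0)+\int_{[0,1)}\mathbf 1_{(s,1)}(y)\,d\tilde f(s)$, from which the Birkhoff sum becomes $\int (N_s(x)-q_k s)\,d\tilde f(s)$ after subtracting $q_k\int\tilde f$. You also refer to ``Steps~1 and~3'' at the end, but there is no Step~3. Finally, you correctly flag that the strict constant $1$ in Step~2 is the only genuine content; your running-difference sketch is right, but you have not actually carried out the bookkeeping showing the piecewise-linear function $s\mapsto N_{[0,s)}(x)-q_k s$ stays in $(-1,1)$. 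One clean way is to note that the $q_k$ orbit points and the $q_k$ points $x+j/q_k$ interlace on the circle (since $\lvert i\alpha-ip_k/q_k\rvert<1/q_{k+1}$ for $0\le i<q_k$), so any half-arc contains the same number of each up to $\pm1$, and the latter count differs from $q_k s$ by strictly less than $1$; combining these carefully (not just adding the two errors) yields the sharp bound.
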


\begin{proposition}\label{3iemprop}
Let $T$ be a 3-interval exchange map $T$ on $[0, \lambda^*)$ and $\bar T$ be the inducing rotation, defined as (\ref{bart}).
For any $x \in [0, \lambda^*)$ we have
$$ \lim_{r \to 0^+} \frac{ \log \tau_{r} (x) }{ - \log r} = 1 $$
if and only if
$$ \lim_{r \to 0^+} \frac{ \log \bar \tau_{r} (x) }{ - \log r} = 1,  $$
where $\bar \tau_r $ is the first return time of $\bar T$.
\end{proposition}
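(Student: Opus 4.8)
The plan is to compare the two return times directly, exploiting that $T$ is the first-return map of $\bar T$ to $J:=[0,\lambda^*)$ and that this return happens very fast. The first ingredient is the elementary observation that the first return time of $\bar T$ to $J$ takes everywhere the value $1$ or $2$: indeed $\bar I\setminus J=[\lambda^*,\lambda^*+\lambda_B)$ is an interval of length $\lambda_B$, whereas by (\ref{bart}) a single step of $\bar T$ moves every point either by $+(\lambda_B+\lambda_C)$ or by $-(\lambda_A+\lambda_B)$, and both quantities have absolute value strictly larger than $\lambda_B$ since $\lambda_A,\lambda_C>0$; hence no point of $\bar I\setminus J$ is sent by $\bar T$ back into $\bar I\setminus J$, so $\bar T(\bar I\setminus J)\subset J$ and every point of $J$ returns to $J$ in at most two steps. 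Writing $S_n(x)$ for the $n$-th time the $\bar T$-orbit of $x\in J$ meets $J$, one then has $T^n x=\bar T^{S_n(x)}x$, $\ n\le S_n(x)\le 2n$, and $\{m\ge 1:\bar T^m x\in J\}=\{S_n(x):n\ge 1\}$.

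With this in hand I would prove a two-sided comparison, valid for a fixed $x\in J$ and all small $r$. If $\tau_r(x)=n$, then $\bar T^{S_n(x)}x=T^n x$ lies within $r$ of $x$, so $\bar\tau_r(x)\le S_n(x)\le 2\tau_r(x)$. Conversely, as soon as $r<\lambda^*-x$, every point of $\bar I$ lying within distance $r$ of $x$ is contained in $[0,x+r)\subset J$, hence outside the gap $\bar I\setminus J$; so with $m:=\bar\tau_r(x)$ the point $\bar T^m x$ belongs to $J$, whence $m=S_j(x)$ for some $j$ with $1\le j\le m$, and then $T^j x=\bar T^m x$ is within $r$ of $x$, giving $\tau_r(x)\le j\le\bar\tau_r(x)$. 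Thus, for every $r<\lambda^*-x$,
$$\tfrac12\,\bar\tau_r(x)\ \le\ \tau_r(x)\ \le\ \bar\tau_r(x).$$
Taking logarithms, dividing by $-\log r>0$ and letting $r\to 0^+$, the term $\tfrac{\log 2}{-\log r}$ tends to $0$, so $\tfrac{\log\tau_r(x)}{-\log r}$ and $\tfrac{\log\bar\tau_r(x)}{-\log r}$ have the same upper and lower limits; in particular the first converges to $1$ precisely when the second does, which is the assertion. (The same squeeze, combined with the asymptotics for rotations in \cite{ChoeSeo}, in fact yields $\limsup_{r\to0^+}\tfrac{\log\tau_r(x)}{-\log r}=1$ and $\liminf_{r\to0^+}\tfrac{\log\tau_r(x)}{-\log r}=1/\eta$ for every $x$, where $\eta$ is the type of the rotation number $\alpha$.)

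I do not anticipate a genuine obstacle; the proof is essentially the squeeze above. The one step that needs a little care is the reverse comparison: it is crucial that $J$ is half-open on the right and that $\lambda^*-x>0$, so that a whole neighbourhood of the fixed point $x$ stays inside $J$ and a $\bar T$-return close to $x$ is automatically a point of the $T$-orbit; without this a close $\bar T$-return could fall into $\bar I\setminus J$ and carry no information about $\tau_r$. The other point worth spelling out, rather than merely invoking the Rauzy--Veech description of $T$ as $\mathcal V(\bar T)$, is the bound $S_n(x)\le 2n$, i.e.\ that the return time of $\bar T$ to $J$ never exceeds $2$.
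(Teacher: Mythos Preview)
Your argument is correct, and it takes a different and more elementary route than the paper's own proof. The paper observes that $\bar\tau_r(x)$ is always a convergent denominator $q_k$ of the rotation number, and then applies the Denjoy--Koksma inequality (Lemma~\ref{Denjoy-Koksma}) to $f=1_{[0,\lambda^*)}$ to obtain the sharp estimate
\[
\left|\tau_r(x)-\frac{\bar\tau_r(x)}{1+\lambda_B}\right|<2,
\]
from which the equality of the two limits is immediate. You instead use only the elementary fact that the first return time of $\bar T$ to $J=[0,\lambda^*)$ is at most~$2$, which yields the cruder two-sided bound $\tfrac12\bar\tau_r(x)\le\tau_r(x)\le\bar\tau_r(x)$ for all small $r$. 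This is already enough, since the constant factor disappears after taking logarithms and dividing by $-\log r$. Your approach avoids both the Denjoy--Koksma lemma and the (true but nontrivial) fact that $\bar\tau_r(x)$ is always some $q_k$; the paper's approach, on the other hand, gives an essentially exact asymptotic $\tau_r(x)\sim\bar\tau_r(x)/(1+\lambda_B)$, which is stronger information even though it is not needed for the proposition itself. The care you take with the restriction $r<\lambda^*-x$ in the reverse inequality matches the paper's own restriction to $x\in[0,\lambda^*-r)$.
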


\begin{proof}
Since $T$ is the induced map of $\bar T$ on $[0,\lambda^*)$, 
if ${\bar T}^{\bar m} (x) \in [0, \lambda^*)$ for $x \in [0, \lambda^*)$, we have
$$
{\bar T}^{\bar m} (x)= T^m(x) , \text{ where } m = \sum_{i=0}^{\bar m -1} 1_{[0, \lambda^*)} (\bar T^i (x)).
$$
Thus, for $x \in [0, \lambda^* - r)$
we have ${\bar T}^{\bar \tau_r(x)} (x) \in [0, \lambda^*)$ and 
$$
\tau_r (x) = \sum_{i=0}^{\bar \tau_r (x) -1} 1_{[0, \lambda^*)} (\bar T^i (x)).
$$
Let $q_k$ be the partial denominators of $\alpha$, the rotational angle of $\bar T$.
Then clearly $\bar \tau_r (x) = q_k $ for some $k \ge 0$.
From Lemma~\ref{Denjoy-Koksma}, we have
$$ \left|\tau_r (x) - \frac{\bar \tau_r (x)}{1+\lambda_B} \right | <  2, $$
which completes the proof immediately.
\end{proof}

As a corollary, a 3-interval exchange map $T$ satisfies Condition (U) if and only if $\bar T$ is of Roth's type. 
Moreover, we see that Condition (R) and Condition (U) are equivalent for 3-interval exchange maps.

Now we compare the Rauzy-Veech induction algorithm for $T$ and $\bar T$.
There are 6 arrows in the Rauzy diagram for a 3-interval exchange map $T$ (see Figure~\ref{diagram3}).
Each arrow in the Rauzy diagram for $\bar T$ corresponds to two arrows of the same loser in the Rauzy diagram for $T$ and remaining 2 arrows of the loser $B$ are not be mapped to any arrows in the Rauzy diagram for $\bar T$ (Figure~\ref{induced_diagram3}).
Denote an arrow of the Rauzy diagram for 2 or 3-interval exchange map by $\alpha(\beta)$, 
where $\alpha$ is the winner and $\beta$ is the loser of the arrow. 

\begin{figure}\caption{3-interval exchange}\label{diagram3}
\includegraphics{graphic_42.mps}
\end{figure}

\begin{figure}\caption{induced Rauzy diagram for 3-interval exchange map}\label{induced_diagram3}
\includegraphics{graphic_43.mps}
\end{figure}

Let $$ R(n) : = \begin{bmatrix} R_{A \bar A}(n) & R_{A \bar C}(n) \\ R_{B \bar A}(n) & R_{B \bar C}(n) \\ R_{C \bar A}(n) & R_{C \bar C}(n) \end{bmatrix}  = Q(n) \begin{bmatrix} 1 & 0 \\ 1 & 1 \\ 0 & 1 \end{bmatrix}.$$
\begin{lemma}\label{3iemsimple}
For $n \ge 0$ we have 
\begin{equation*}
[ 0, 1, 0] R(n) = 
\begin{cases}
[ 1, 0, 1] R(n), &\pi^{(n)} = \bigl( \begin{smallmatrix} A & B & C \\ C & B & A \end{smallmatrix} \bigr), \\
[ 0, 0, 1] R(n), &\pi^{(n)} = \bigl( \begin{smallmatrix} A & C & B \\ C & B & A \end{smallmatrix} \bigr), \\
[ 1, 0, 0] R(n), &\pi^{(n)} = \bigl( \begin{smallmatrix} A & B & C \\ C & A & B \end{smallmatrix} \bigr).
\end{cases}
\end{equation*}
\end{lemma}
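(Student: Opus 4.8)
The plan is to prove the three identities simultaneously by induction on $n$, using that each Rauzy--Veech step acts on the matrix $R(n)$ by an elementary row operation. The first thing to pin down is the Rauzy diagram of the starting permutation. From $\pi^{(0)} = \bigl(\begin{smallmatrix} A & B & C \\ C & B & A\end{smallmatrix}\bigr)$ the Rauzy class is exactly the set of the three admissible pairs appearing in the statement, say $\pi_1 = \bigl(\begin{smallmatrix} A & B & C \\ C & B & A\end{smallmatrix}\bigr)$, $\pi_2 = \bigl(\begin{smallmatrix} A & C & B \\ C & B & A\end{smallmatrix}\bigr)$, $\pi_3 = \bigl(\begin{smallmatrix} A & B & C \\ C & A & B\end{smallmatrix}\bigr)$; applying the rules for $\hat\pi_t,\hat\pi_b$ from Section~\ref{sec_background} one reads off the six arrows (Figure~\ref{diagram3}): from $\pi_1$ the arrow $C(A)$ to $\pi_3$ and the arrow $A(C)$ to $\pi_2$; from $\pi_2$ the self-loop $B(A)$ and the arrow $A(B)$ to $\pi_1$; from $\pi_3$ the arrow $C(B)$ to $\pi_1$ and the self-loop $B(C)$.

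Next I would translate a Rauzy step into a row operation on $R(n)$. Since $Q(n+1) = B_{\gamma(n,n+1)}\,Q(n)$ and $B_\gamma = \mathbb I + E_{\beta\alpha}$ for an arrow $\gamma$ with winner $\alpha$ and loser $\beta$, we get $R(n+1) = B_{\gamma(n,n+1)}\,R(n)$, and left multiplication by $\mathbb I + E_{\beta\alpha}$ replaces the $\beta$-row of $R(n)$ by the sum of its $\beta$-row and its $\alpha$-row, leaving the other two rows unchanged. Writing $R_A,R_B,R_C$ for the rows of $R(n)$, the three cases of the statement say precisely that $R_B = R_A + R_C$ when $\pi^{(n)} = \pi_1$, that $R_B = R_C$ when $\pi^{(n)} = \pi_2$, and that $R_B = R_A$ when $\pi^{(n)} = \pi_3$.

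The induction is then a short finite check. For $n = 0$ one has $Q(0) = \mathbb I$, so $R(0) = \bigl(\begin{smallmatrix} 1 & 0 \\ 1 & 1 \\ 0 & 1\end{smallmatrix}\bigr)$ and $\pi^{(0)} = \pi_1$, and the middle row $(1,1)$ is the sum of the first row $(1,0)$ and the third row $(0,1)$, which is the $\pi_1$-relation. For the inductive step, denote by $R_A',R_B',R_C'$ the rows of $R(n+1)$ and run through the six arrows: for instance along $C(A)\colon\pi_1\to\pi_3$ the operation $R_A\mapsto R_A+R_C$ gives $R_B' = R_B = R_A + R_C = R_A'$, the $\pi_3$-relation; along $A(B)\colon\pi_2\to\pi_1$ the operation $R_B\mapsto R_B+R_A$ gives $R_B' = R_B + R_A = R_C + R_A = R_C' + R_A'$, the $\pi_1$-relation; the remaining four arrows are verified in the same one-line fashion. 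Since both outgoing arrows of each vertex are treated, the argument does not depend on whether $T(n)$ is of top or bottom type.

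There is no genuine obstacle; the whole argument is bookkeeping. The two places where some care is needed are getting the six winner/loser pairs and target vertices correct from the combinatorial definitions, and keeping straight that the action on $R(n)$ is by \emph{left} multiplication and that it is the loser-row, not the winner-row, that is modified. Once those are fixed, the six one-line verifications close the induction.
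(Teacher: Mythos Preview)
Your proof is correct and follows essentially the same approach as the paper: induction on $n$ with a case-by-case verification along the arrows of the Rauzy diagram. The only cosmetic differences are that you phrase the identities as row relations $R_B = R_A + R_C$, $R_B = R_C$, $R_B = R_A$ (which makes the checks slightly slicker), while the paper carries out the matrix multiplications explicitly and exploits the $A\leftrightarrow C$ symmetry to reduce from six arrows to three.
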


\begin{proof}
By the symmetry we only consider arrows of $A(B), A(C), B(A)$.
When $\gamma^T(n,n+1) = A(B)$, we have $\pi^{(n)} =\bigl( \begin{smallmatrix} A & C & B \\ C & B & A \end{smallmatrix}  \bigr)$, $\pi^{(n+1)} = \bigl( \begin{smallmatrix} A & B & C \\ C & B & A \end{smallmatrix} \bigr)$. Thus 
\begin{equation*}\begin{split}
[0,1,0] R(n+1) &= [0,1,0] \begin{bmatrix} 1 & 0 & 0 \\ 1 & 1 & 0 \\ 0 & 0 & 1 \end{bmatrix} R(n) = [1,1,0] R(n) \\
&= [1,0,1] R(n) = [1,0,1] \begin{bmatrix} 1 & 0 & 0 \\ 1 & 1 & 0 \\ 0 & 0 & 1 \end{bmatrix} R(n) = [1,0,1] R(n+1).
\end{split}\end{equation*}
By an arrow $\gamma^T(n,n+1)=A(C)$, we have $\pi^{(n)} =\bigl( \begin{smallmatrix} A & B & C \\ C & B & A \end{smallmatrix} \bigr)$, $\pi^{(n+1)} = \bigl( \begin{smallmatrix} A & C & B \\ C & B & A \end{smallmatrix}  \bigr)$, so 
\begin{equation*}\begin{split}
[0,1,0] R(n+1) &= [0,1,0] \begin{bmatrix} 1 & 0 & 0 \\ 0 & 1 & 0 \\ 1 & 0 & 1 \end{bmatrix} R(n) = [0,1,0] R(n) \\
&= [1,0,1] R(n)= [0,0,1] \begin{bmatrix} 1 & 0 & 0 \\ 0 & 1 & 0 \\ 1 & 0 & 1 \end{bmatrix} R(n)= [0,0,1] R(n+1).
\end{split}\end{equation*}
If $\gamma^T(n,n+1)=B(A)$, then 
we have $\pi^{(n)} = \pi^{(n+1)} = \bigl( \begin{smallmatrix} A & C & B \\ C & B & A \end{smallmatrix}  \bigr)$ and
\begin{equation*}\begin{split}
[0,1,0] R(n+1) &= [0,1,0] \begin{bmatrix} 1 & 1 & 0 \\ 0 & 1 & 0 \\ 0 & 0 & 1 \end{bmatrix} R(n) = [0,1,0] R(n) \\
&= [0,0,1] R(n) = [0,0,1] \begin{bmatrix} 1 & 1 & 0 \\ 0 & 1 & 0 \\ 0 & 0 & 1 \end{bmatrix} R(n) = [0,0,1] R(n+1).
\end{split}\end{equation*}
Since the lemma holds for $n = 0$, the induction rule completes the proof. 
\end{proof}

For a given 3-interval exchange map $T$ define
$$\ell(n) := \# \left\{ 1 \le m \le n \mid \pi^{(m)} = \bigl( \begin{smallmatrix} A & C & B \\ C & B & A \end{smallmatrix} \bigr) \text{ or } \bigl( \begin{smallmatrix} A & B & C \\ C & A & B \end{smallmatrix} \bigr) \right\}.$$

\begin{proposition}\label{3iem_prop}
Let $T$ be a 3-interval exchange map $T$. 
By the mapping $\alpha(A) \mapsto \bar C (\bar A)$, $\alpha(C) \mapsto \bar A (\bar C)$ and $\alpha(B) \mapsto \varepsilon$, where $\varepsilon$ is the empty arrow,
the infinite sequence of arrows in the Rauzy diagram for $T$ is mapped to the infinite sequence of arrows in the Rauzy diagram for $\bar T$.

Denote by $\bar Q(m) = B_{\gamma^{\bar T}(0,m)}$ be the continued fraction matrix for $\bar T$.
Then for $n \ge 0$
$$ \bar Q (\ell(n)) = \begin{bmatrix} 1 & 0 & 0 \\ 0 & 0 & 1 \end{bmatrix} R( n ) \ \text{ and }
$$
$$ \bar \lambda (\ell(n)) =
\lambda (n) \begin{bmatrix} 1 & 0 \\ 1 & 1 \\ 0 & 1 \end{bmatrix}, \quad  \lambda (n) \begin{bmatrix} 1 & 0 \\ 0 & 1 \\ 0 & 1 \end{bmatrix}, \quad \lambda (n) \begin{bmatrix} 1 & 0 \\ 1 & 0 \\ 0 & 1 \end{bmatrix}$$
for $\pi^{(n)} = \bigl( \begin{smallmatrix} A & B & C \\ C & B & A \end{smallmatrix} \bigr)$,
$\bigl( \begin{smallmatrix} A & C & B \\ C & B & A \end{smallmatrix} \bigr)$,
$\bigl( \begin{smallmatrix} A & B & C \\ C & A & B \end{smallmatrix} \bigr)$, respectively.
\end{proposition}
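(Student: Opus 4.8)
The plan is to prove the whole proposition by a single induction on $n$, carrying the arrow-correspondence, the matrix identity $\bar Q(\ell(n)) = \left[\begin{smallmatrix}1&0&0\\0&0&1\end{smallmatrix}\right]R(n)$ and the length identity simultaneously. They must be proved together because the one-step update $R(n+1)=B_{\gamma^T(n,n+1)}R(n)$ sometimes feeds the $B$-row of $R(n)$ into the $A$- or $C$-row of $R(n+1)$ --- precisely when the winner of $\gamma^T(n,n+1)$ is $B$ --- whereas the matrix identity itself controls only the $A$- and $C$-rows; what rescues the induction is Lemma~\ref{3iemsimple}, which rewrites the $B$-row of $R(n)$ in terms of its $A$- and $C$-rows, the combination depending on $\pi^{(n)}$. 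The same dependence on $\pi^{(n)}$ is why three different right multipliers appear in the length statement.

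First I would record the combinatorics. The Rauzy class of $T$ consists of the three vertices $\pi_1=\left(\begin{smallmatrix}A&B&C\\C&B&A\end{smallmatrix}\right)$, $\pi_2=\left(\begin{smallmatrix}A&C&B\\C&B&A\end{smallmatrix}\right)$, $\pi_3=\left(\begin{smallmatrix}A&B&C\\C&A&B\end{smallmatrix}\right)$, with the six arrows $A(C)\colon\pi_1\to\pi_2$, $C(A)\colon\pi_1\to\pi_3$, $A(B)\colon\pi_2\to\pi_1$, $B(A)\colon\pi_2\to\pi_2$, $C(B)\colon\pi_3\to\pi_1$, $B(C)\colon\pi_3\to\pi_3$; for instance $A(C)$ is the bottom-type ($\mathcal R_b$) arrow out of $\pi_1$. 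The Rauzy class of $\bar T$ is the single vertex $\left(\begin{smallmatrix}\bar A&\bar C\\\bar C&\bar A\end{smallmatrix}\right)$ with the two self-loops $\bar C(\bar A)$ (top type) and $\bar A(\bar C)$ (bottom type). Reading off the list, the loser of $\gamma^T(n,n+1)$ lies in $\{A,C\}$ exactly when $\pi^{(n+1)}\in\{\pi_2,\pi_3\}$, i.e.\ exactly when $\ell(n+1)=\ell(n)+1$; when the loser is $B$ one has $\pi^{(n+1)}=\pi_1$ and $\ell(n+1)=\ell(n)$. The only loser-$B$ arrows are $A(B)$ and $C(B)$, both ending at $\pi_1$, out of which no loser-$B$ arrow leaves; so the $T$-path never has two consecutive loser-$B$ arrows, hence $\ell(n)\to\infty$. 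Consequently, once the per-step version of the proposition is proved, letting $n\to\infty$ upgrades it to the stated assertion about the entire infinite Rauzy-Veech path of $\bar T$ --- which is defined and infinite because $T$ has Keane's property, equivalently $\bar T$ is irrational.

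For the induction, the base case $n=0$ is a direct check: $Q(0)=\mathbb I$ gives $R(0)=\left[\begin{smallmatrix}1&0\\1&1\\0&1\end{smallmatrix}\right]$, $\ell(0)=0$, $\bar Q(0)=\mathbb I$, $\left[\begin{smallmatrix}1&0&0\\0&0&1\end{smallmatrix}\right]R(0)=\mathbb I$, and $\bar\lambda(0)=\lambda(0)\left[\begin{smallmatrix}1&0\\1&1\\0&1\end{smallmatrix}\right]$ since $\pi^{(0)}=\pi_1$. For the inductive step I check the arrows one by one; exactly as in the proof of Lemma~\ref{3iemsimple}, the arrows $C(A)$, $C(B)$, $B(C)$ follow from $A(C)$, $A(B)$, $B(A)$ by the symmetry exchanging $A$ and $C$, so it suffices to treat the latter three. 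In each case I write $R(n+1)=B_\gamma R(n)$ with $B_\gamma=\mathbb I+E_{\beta\alpha}$ ($\alpha$ the winner, $\beta$ the loser), substitute for the $B$-row of $R(n)$ via Lemma~\ref{3iemsimple} in the form appropriate to $\pi^{(n)}$, and read off that $\left[\begin{smallmatrix}1&0&0\\0&0&1\end{smallmatrix}\right]R(n+1)$ equals either $\bar Q(\ell(n))$ (loser $B$: the empty-arrow case, $\ell$ unchanged) or $\bar B_{\bar\gamma}\bar Q(\ell(n))=\bar Q(\ell(n)+1)$ with $\bar\gamma$ the image arrow (loser in $\{A,C\}$); this confirms both the arrow rule and the matrix identity at $n+1$. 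Finally I propagate the length identity: from the inductive expression of $\bar\lambda(\ell(n))$ in terms of $\lambda(n)$ and $\pi^{(n)}$, together with the type of $T(n)$, I deduce the matching type of $\bar T(\ell(n))$ --- so that the Rauzy-Veech algorithm for $\bar T$ indeed produces $\bar\gamma$ --- and subtracting the loser's length on both sides yields $\bar\lambda(\ell(n+1))=\lambda(n+1)$ times the multiplier attached to $\pi^{(n+1)}$.

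The bulk of the work is this six-case bookkeeping, which is routine arithmetic; the point that genuinely needs care, and which I expect to be the crux, is that the matrix identity alone is not inductive, and it is Lemma~\ref{3iemsimple} --- used with the right case for $\pi^{(n)}$ --- that closes the loop, together with the type-consistency check ensuring that the image arrow is precisely the one the Rauzy-Veech algorithm selects for $\bar T$.
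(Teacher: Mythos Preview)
Your proposal is correct and follows essentially the same approach as the paper: both argue by induction on $n$, reduce by the $A\leftrightarrow C$ symmetry to the three arrows $A(B)$, $A(C)$, $B(A)$, and verify in each case the matrix identity (invoking Lemma~\ref{3iemsimple} for the $B(A)$ step) together with the length identity. Your write-up adds two clarifications the paper leaves implicit --- that $\ell(n)\to\infty$ because loser-$B$ arrows cannot occur consecutively, and the type-consistency check guaranteeing that the image arrow is the one actually selected by the Rauzy--Veech algorithm for $\bar T$ --- but the underlying argument is the same.
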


\begin{proof}
By the symmetry we only consider arrows of $A(B), A(C), B(A)$.

Case (i) : If  $\gamma^T(n,n+1) = A(B)$, then the corresponding arrow of the Rauzy map for $\bar T$ is empty, i.e., $\ell(n+1) = \ell(n)$.
Since
$$ Q(n,n+1) = \begin{bmatrix} 1 & 0 & 0 \\ 1 & 1 & 0 \\ 0 & 0 & 1 \end{bmatrix} \text{ and }
\pi^{(n)} =\bigl( \begin{smallmatrix} A & C & B \\ C & B & A \end{smallmatrix}  \bigr), \pi^{(n+1)} = \bigl( \begin{smallmatrix} A & B & C \\ C & B & A \end{smallmatrix} \bigr), $$
we have 
\begin{equation*}\begin{split}
\bar \lambda (\ell(n)) 
&= \lambda (n)  \begin{bmatrix} 1 & 0 \\ 0 & 1 \\ 0 & 1 \end{bmatrix} 
= \lambda (n+1)  \begin{bmatrix} 1 & 0 & 0 \\ 1 & 1 & 0 \\ 0 & 0 & 1 \end{bmatrix} \begin{bmatrix} 1 & 0 \\ 0 & 1 \\ 0 & 1 \end{bmatrix}  
= \lambda (n+1)  \begin{bmatrix} 1 & 0 \\ 1 & 1 \\ 0 & 1 \end{bmatrix}  \\
&= \bar \lambda (\ell(n+1)),  
\end{split}\end{equation*}
\begin{equation*}\begin{split}
\bar Q(\ell(n+1)) &= \begin{bmatrix} 1 & 0 & 0 \\ 0 & 0 & 1 \end{bmatrix} R(n+1)
= \begin{bmatrix} 1 & 0 & 0 \\ 0 & 0 & 1 \end{bmatrix} \begin{bmatrix} 1 & 0 & 0 \\ 1 & 1 & 0 \\ 0 & 0 & 1 \end{bmatrix} R(n) \\
&= \begin{bmatrix} 1 & 0 & 0 \\ 0 & 0 & 1 \end{bmatrix} R(n) = \bar Q(\ell(n)).
\end{split}\end{equation*}

Case (ii) : If $\gamma^T(n,n+1)=A(C)$, then $\gamma^{\bar T}(\ell(n),\ell(n+1))=\bar A(\bar C)$ and
$$ Q(n,n+1) = \begin{bmatrix} 1 & 0 & 0 \\ 0 & 1 & 0 \\ 1 & 0 & 1 \end{bmatrix}, \quad \bar Q(\ell(n), \ell(n+1)) = \begin{bmatrix} 1 & 0 \\ 1 & 1 \end{bmatrix}.$$  
Since $\pi^{(n)} =\bigl( \begin{smallmatrix} A & B & C \\ C & B & A \end{smallmatrix} \bigr)$, $\pi^{(n+1)} = \bigl( \begin{smallmatrix} A & C & B \\ C & B & A \end{smallmatrix} \bigr)$, we have
\begin{equation*}\begin{split}
\bar \lambda (\ell(n)) 
&= \lambda (n)  \begin{bmatrix} 1 & 0 \\ 1 & 1 \\ 0 & 1 \end{bmatrix} 
= \lambda (n+1)  \begin{bmatrix} 1 & 0 & 0 \\ 0 & 1 & 0 \\ 1 & 0 & 1 \end{bmatrix} \begin{bmatrix} 1 & 0 \\ 1 & 1 \\ 0 & 1 \end{bmatrix}  
= \lambda (n+1)  \begin{bmatrix} 1 & 0 \\ 0 & 1 \\ 0 & 1 \end{bmatrix} \begin{bmatrix} 1 & 0 \\ 1 & 1 \end{bmatrix}  \\
&= \bar \lambda (\ell(n+1))   \begin{bmatrix} 1 & 0 \\ 1 & 1 \end{bmatrix},
\end{split}\end{equation*}
\begin{equation*}\begin{split}
\bar Q(\ell(n+1)) &= \begin{bmatrix} 1 & 0 & 0 \\ 0 & 0 & 1 \end{bmatrix} R(n+1)
= \begin{bmatrix} 1 & 0 & 0 \\ 0 & 0 & 1 \end{bmatrix} \begin{bmatrix} 1 & 0 & 0 \\ 0 & 1 & 0 \\ 1 & 0 & 1 \end{bmatrix} R(n) \\
& =\begin{bmatrix} 1 & 0 \\ 1 & 1 \end{bmatrix} \begin{bmatrix} 1 & 0 & 0 \\ 0 & 0 & 1 \end{bmatrix} R(n) 
= \begin{bmatrix} 1 & 0 \\ 1 & 1 \end{bmatrix} \bar Q(\ell(n)).
\end{split}\end{equation*}

Case (iii) : If $\gamma^T(n,n+1)=B(A)$, then 
$\gamma^{\bar T}(\ell(n),\ell(n+1))=\bar A(\bar C)$ and
$$ Q(n,n+1) = \begin{bmatrix} 1 & 1 & 0 \\ 0 & 1 & 0 \\ 0 & 0 & 1 \end{bmatrix}, \quad \bar Q(\ell(n), \ell(n+1)) = \begin{bmatrix} 1 & 1 \\ 0 & 1 \end{bmatrix}.$$  
From the fact that $\pi^{(n)} = \pi^{(n+1)} = \bigl( \begin{smallmatrix} A & C & B \\ C & B & A \end{smallmatrix}  \bigr)$ we have
\begin{equation*}\begin{split}
\bar \lambda (\ell(n)) 
&= \lambda (n)  \begin{bmatrix} 1 & 0 \\ 0 & 1 \\ 0 & 1 \end{bmatrix} 
= \lambda (n+1)  \begin{bmatrix} 1 & 1 & 0 \\ 0 & 1 & 0 \\ 0 & 0 & 1 \end{bmatrix} \begin{bmatrix} 1 & 0 \\ 0 & 1 \\ 0 & 1 \end{bmatrix}  
= \lambda (n+1)  \begin{bmatrix} 1 & 0 \\ 0 & 1 \\ 0 & 1 \end{bmatrix} \begin{bmatrix} 1 & 1 \\ 0 & 1 \end{bmatrix} \\  
&= \bar \lambda (\ell(n+1))   \begin{bmatrix} 1 & 1 \\ 0 & 1 \end{bmatrix}  
\end{split}\end{equation*}
and by Lemma~\ref{3iemsimple}
\begin{equation*}\begin{split}
\bar Q(\ell(n+1)) &= \begin{bmatrix} 1 & 0 & 0 \\ 0 & 0 & 1 \end{bmatrix} R(n+1)
= \begin{bmatrix} 1 & 0 & 0 \\ 0 & 0 & 1 \end{bmatrix} \begin{bmatrix} 1 & 1 & 0 \\ 0 & 1 & 0 \\ 0 & 0 & 1 \end{bmatrix} R(n) \\
&= \begin{bmatrix} 1 & 1 & 0 \\ 0 & 0 & 1 \end{bmatrix} R(n)
= \begin{bmatrix} 1 & 0 & 1 \\ 0 & 0 & 1 \end{bmatrix} R(n) 
= \begin{bmatrix} 1 & 1 \\ 0 & 1 \end{bmatrix} \bar Q(\ell(n)).
\end{split}\end{equation*}

Since the proposition holds for $n = 0$, the induction rule completes the proof. 
\end{proof}

We have the following inequality for $\|Q(n)\|$:

\begin{lemma}\label{3iem_lem1}
We have 
$$\frac 12 \| \bar Q (s_n) \| \le \| Q(n)\| \le 2 \| \bar Q (s_n) \| .$$
\end{lemma}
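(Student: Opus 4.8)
The plan is to compare both quantities to the nonnegative $3\times2$ matrix $R(n)=Q(n)M$ with $M=\left(\begin{smallmatrix}1&0\\ 1&1\\ 0&1\end{smallmatrix}\right)$ introduced before Lemma~\ref{3iemsimple}, by establishing the two sandwich estimates $\|Q(n)\|\le\|R(n)\|\le2\|Q(n)\|$ and $\|\bar Q(s_n)\|\le\|R(n)\|\le2\|\bar Q(s_n)\|$ (here $s_n=\ell(n)$, as in Proposition~\ref{3iem_prop}); chaining these immediately yields $\tfrac12\|\bar Q(s_n)\|\le\|Q(n)\|\le2\|\bar Q(s_n)\|$.

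For the first estimate I would simply expand, using that every entry of $Q(n)$ is nonnegative:
\[
\|R(n)\|=\sum_{\beta}\sum_{j}\sum_{\alpha}Q_{\beta\alpha}(n)\,M_{\alpha j}=\sum_{\beta,\alpha}Q_{\beta\alpha}(n)\,\sigma_\alpha,
\]
where $\sigma_\alpha=\sum_j M_{\alpha j}$ is the $\alpha$-row sum of $M$, so $\sigma_A=\sigma_C=1$ and $\sigma_B=2$. Since $1\le\sigma_\alpha\le2$ for every $\alpha$, this gives $\|Q(n)\|\le\|R(n)\|\le2\|Q(n)\|$.

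For the second estimate, recall from Proposition~\ref{3iem_prop} that $\bar Q(s_n)=\left(\begin{smallmatrix}1&0&0\\ 0&0&1\end{smallmatrix}\right)R(n)$, i.e.\ $\bar Q(s_n)$ is exactly the submatrix of $R(n)$ consisting of its $A$- and $C$-rows. Hence $\|\bar Q(s_n)\|$ is the sum of the entries in those two rows, so $\|\bar Q(s_n)\|\le\|R(n)\|$ and more precisely $\|R(n)\|=\|\bar Q(s_n)\|+\bigl(R_{B\bar A}(n)+R_{B\bar C}(n)\bigr)$. The key point is that the discarded $B$-row is controlled: by Lemma~\ref{3iemsimple}, according to the three possible values of $\pi^{(n)}$ the row $[0,1,0]R(n)$ equals $[1,0,1]R(n)$, $[0,0,1]R(n)$, or $[1,0,0]R(n)$, so its entry sum is respectively the sum of the $A$- and $C$-rows, the $C$-row sum, or the $A$-row sum — in every case at most $\|\bar Q(s_n)\|$. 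Therefore $\|\bar Q(s_n)\|\le\|R(n)\|\le2\|\bar Q(s_n)\|$, and combining the two estimates finishes the proof.

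I expect no genuine obstacle; everything reduces to nonnegativity and counting of entries. The one place where structure is needed is the upper bound $\|R(n)\|\le2\|\bar Q(s_n)\|$: a priori the $B$-row of $R(n)$ that is dropped in passing to $\bar Q(s_n)$ could be much larger than the rows that are kept, and it is precisely the combinatorial identity of Lemma~\ref{3iemsimple} that rules this out by forcing the $B$-row to equal a subsum of the $A$- and $C$-rows.
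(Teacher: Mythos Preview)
Your proof is correct and follows essentially the same approach as the paper: both arguments pass through the intermediate matrix $R(n)$, use the trivial bounds $\|Q(n)\|\le\|R(n)\|\le2\|Q(n)\|$ from the row sums of $M$, and then invoke Lemma~\ref{3iemsimple} to control the $B$-row of $R(n)$ by the $A$- and $C$-rows, giving $\|\bar Q(s_n)\|\le\|R(n)\|\le2\|\bar Q(s_n)\|$. The only difference is presentational---you state both sandwiches explicitly before chaining, while the paper chains directly---but the content is identical.
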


\begin{proof}
By Proposition~\ref{3iem_prop} we have
$$
\| \bar Q (\ell(n)) \| = 
\left \| \begin{bmatrix} 1 & 0 & 0 \\ 0 & 0 & 1 \end{bmatrix} R( n ) \right \| 
\le \| R(n) \| = \left \| Q(n) \begin{bmatrix} 1 & 0 \\ 1 & 1 \\ 0 & 1 \end{bmatrix} \right \| 
\le 2 \|  Q(n) \|.  
$$
For the other side from Lemma~\ref{3iemsimple}
\begin{equation*}
\begin{split}
\| Q (n) \| &\le \| R (n) \| = \| [ 1,1,1]  R(n)  \| =   \| [1,0,1] R(n) \|  + \| [0,1,0] R(n) \|  \\
&\le 2  \| [1,0,1] R(n) \| = 2 \left \| \begin{bmatrix} 1 & 0 & 0 \\ 0 & 0 & 1 \end{bmatrix} R(n)  \right \| = 2 \| \bar Q(s_n)\|.  
\end{split}
\end{equation*}
\end{proof}

Let $n_k$ and $\bar n_k$ be the sequence of Zorich's acceleration for $T$ and $\bar T$ respectively as defined in Section 2.
Also denote by $\bar Z(k)$ be the Zorich's acceleration matrix for $\bar T$.

\begin{lemma}\label{3iem_lem2}
For each $k \ge 0$, There exist $j(k) \ge 0$ such that 
\begin{equation*}
\ell(n_{j(k)}) = \bar n_k,
\end{equation*}
Moreover, we have $j(k+1) \le j(k) + 3$ and 
$$\| \bar Z ( k, k +1) \| < \| Z ( j(k), j(k+1) ) \| \le 2 \| \bar Z ( k, k +1) \|.$$
\end{lemma}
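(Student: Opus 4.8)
The heart of the matter is the relationship established in Proposition~\ref{3iem_prop}: the infinite Rauzy path for $T$ maps, via the letter identification $\alpha(A)\mapsto\bar C(\bar A)$, $\alpha(C)\mapsto\bar A(\bar C)$, $\alpha(B)\mapsto\varepsilon$, onto the infinite Rauzy path for $\bar T$, with $\ell(n)$ counting how many non-empty arrows have been produced up to time $n$. First I would record the simple combinatorial dictionary: an arrow $\gamma^T(n,n+1)$ with winner $A$ or $C$ contributes one arrow to the $\bar T$-path (with the \emph{same} winner, $\bar A$ or $\bar C$, in the renormalized circle rotation), whereas an arrow with winner $B$ contributes nothing and in fact does not change $\pi^{(n)}$ off the middle state. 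From this one sees that in a maximal block of $T$-arrows with a fixed winner, at most the "$B$-type" arrows are lost, and consecutive $A$-winner and $C$-winner arrows alternate in a controlled way; the key quantitative fact is that a maximal Zorich block $\gamma^T(n_j,n_{j+1})$ with winner, say, $A$ maps onto a union of at most a bounded number (in fact $\le 2$, hence the $\le 3$ slack) of $\bar T$-Zorich blocks, because the winner alternates between $\bar A$ and $\bar C$ at most a bounded number of times inside such a block after deleting the empty $B$-arrows.

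Concretely, I would argue as follows. Since every letter is a winner infinitely often, the sequence $\ell(n_j)$ is unbounded and non-decreasing, so for each $k$ there is a smallest $j(k)$ with $\ell(n_{j(k)})\ge\bar n_k$; the combinatorial analysis above — that the winner of the $\bar T$-path is locally constant across each $T$-Zorich block up to the deletion of $B$-arrows — forces $\ell(n_{j(k)})=\bar n_k$ exactly (an overshoot would mean a Zorich block for $\bar T$ ended strictly inside the image of a $T$-Zorich block, contradicting maximality of the $\bar T$-block). For the bound $j(k+1)\le j(k)+3$: the $\bar T$-block $\gamma^{\bar T}(\bar n_k,\bar n_{k+1})$ has a single winner $\bar A$ or $\bar C$; its preimage in the $T$-path is a run of $A$-winner arrows interspersed with $B$-arrows, \emph{or} a run of $C$-winner arrows interspersed with $B$-arrows, which can straddle at most three consecutive $T$-Zorich blocks (one block of the one winner type, an intervening block whose arrows are all $B$, and one block of the other winner type — the $B$-block being the only way to pass between $A$ and $C$ without producing a $\bar T$-arrow). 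Hence $j(k+1)-j(k)\le 3$.

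Finally, for the norm comparison I would use Lemma~\ref{3iem_lem1} applied at the levels $n_{j(k)}$ and $n_{j(k+1)}$, together with the cocycle relations $Z(j(k),j(k+1))=Q(n_{j(k)},n_{j(k+1)})$ and $\bar Z(k,k+1)=\bar Q(\bar n_k,\bar n_{k+1})$. Proposition~\ref{3iem_prop} gives, for the relevant intervals, $\bar Q(\ell(m),\ell(m'))$ as a product over the non-empty arrows that equals the corresponding $Q$-product sandwiched between the fixed $3\times 2$ injection matrix $\left[\begin{smallmatrix}1&0\\1&1\\0&1\end{smallmatrix}\right]$ (in its three permuted forms) on the right and the projection $\left[\begin{smallmatrix}1&0&0\\0&0&1\end{smallmatrix}\right]$ on the left; since deleting the $B$-arrows only removes unimodular factors $B_{B(A)}$ or $B_{B(C)}$ that act trivially on the $\bar T$ data, the $\bar T$-cocycle block is exactly the image of the $T$-cocycle block, and the two projection/injection matrices change the matrix norm by at most a factor of $2$ on each side, but the relevant entries line up so that one gets $\|\bar Z(k,k+1)\|<\|Z(j(k),j(k+1))\|\le 2\|\bar Z(k,k+1)\|$; the strict lower inequality comes from the extra (at least one) $B$-arrow or from the strictly larger support of the $T$-matrix. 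The step I expect to be the main obstacle is pinning down precisely that $\ell(n_{j(k)})=\bar n_k$ with no overshoot — i.e.\ that a maximal one-winner block downstairs cannot map to something that cuts a maximal one-winner block upstairs in the middle — which requires carefully tracking how the winner of the $\bar T$-path behaves across a $B$-arrow versus across an $A$- or $C$-arrow, using the explicit permutation transitions recorded in the proof of Proposition~\ref{3iem_prop}.
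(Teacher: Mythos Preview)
Your proposal has a real error in the combinatorial dictionary that propagates through the whole argument. In Proposition~\ref{3iem_prop} the map on arrows is $\alpha(A)\mapsto\bar C(\bar A)$, $\alpha(C)\mapsto\bar A(\bar C)$, $\alpha(B)\mapsto\varepsilon$, and in the notation $\alpha(\beta)$ the second letter is the \emph{loser}. So it is the \emph{loser} of the $T$-arrow that determines the image, not the winner: the arrows erased are $A(B)$ and $C(B)$, while $B(A)$ and $B(C)$ (winner $B$!) map to $\bar C(\bar A)$ and $\bar A(\bar C)$ respectively. Your description ``arrows with winner $A$ or $C$ contribute, arrows with winner $B$ contribute nothing'' is therefore backwards on the $B$-arrows and wrong on $A(B)$, $C(B)$. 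This matters: the cases where $j(k+1)=j(k)+3$ in the paper are precisely those where a $\bar T$-block $\bar A(\bar C)^w$ comes from a $T$-Zorich block of $B(C)$'s, followed by a one-arrow block $C(B)$ (which is erased), followed by an $A$-winner block --- i.e.\ three $T$-Zorich blocks with winners $B$, $C$, $A$ in that order. Your picture of ``$A$-winner arrows interspersed with $B$-arrows'' misses these cases entirely, and with the wrong dictionary you would also fail to show that $\ell(n_{j(k)})$ lands exactly on $\bar n_k$.

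Separately, the norm comparison cannot be obtained from Lemma~\ref{3iem_lem1} as you suggest: that lemma compares the \emph{cumulative} cocycles $\|Q(n)\|$ and $\|\bar Q(\ell(n))\|$, whereas the claim is about the \emph{incremental} blocks $Z(j(k),j(k+1))$ and $\bar Z(k,k+1)$, and submultiplicativity of the norm runs in the wrong direction to pass from one to the other. The paper instead proceeds by a direct enumeration: for $\gamma^{\bar T}(\bar n_k,\bar n_{k+1})=\bar A(\bar C)^w$ there are exactly seven possible forms of $\gamma^T(n_{j(k)},\,\cdot\,)$ (cases with $j(k+1)-j(k)=1$ and cases with $j(k+1)-j(k)=3$), the matrix $Z(j(k),j(k+1))$ is written down explicitly in each case, and one reads off $w+2<\|Z(j(k),j(k+1))\|\le 2w+4$ by inspection. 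Once you correct the dictionary (loser, not winner) you will be led to the same seven cases, and at that point the explicit matrices are the cleanest route to the norm bounds.
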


\begin{proof}
For $k=0$, we have $n_0 = 0$ and $j(0) = 0$.
Suppose that for a given $k\ge 0$ there exists $j(k)$ satisfying $\ell(n_{j(k)}) = \bar n_k$.
For $\gamma^{\bar T}(\bar n_k, \bar n_{k+1}) = \bar A(\bar C)^w$, $w = \bar n_{k+1} - \bar n_k \ge 1$
there are 7 cases of $\gamma^T ( n_{j(k)}, \infty )$:
\begin{align*}
&A(B)A(C) \cdots A(B)A(C) \ B(A) \cdots, & j(k+1) = j(k)+1, \\
&A(B)A(C) \cdots A(C)A(B) \ C(A) \cdots, & j(k+1) = j(k)+1, \\
&A(C)A(B) \cdots A(C)A(B) \ C(A) \cdots, & j(k+1) = j(k)+1, \\
&A(C)A(B) \cdots A(B)A(C) \ B(A) \cdots, & j(k+1) = j(k)+1, \\
&B(C) \cdots B(C) \ C(B) C(A) \cdots, & j(k+1) = j(k)+1, \\
&B(C) \cdots B(C) \ C(B) \ A(C) A(B) \cdots A(C) A(B) \ C(A) \cdots, & j(k+1) = j(k)+3, \\
&B(C) \cdots B(C) \ C(B) \ A(C) A(B) \cdots A(B) A(C) \ B(A) \cdots, & j(k+1) = j(k)+3,
\end{align*}
Moreover, $Z(j(k),j(k+1))$ is
\begin{multline*}
\begin{bmatrix} 1 & 0 & 0 \\ w & 1 & 0 \\ w & 0 & 1 \end{bmatrix}, 
\begin{bmatrix} 1 & 0 & 0 \\ w+1 & 1 & 0 \\ w & 0 & 1 \end{bmatrix}, 
\begin{bmatrix} 1 & 0 & 0 \\ w & 1 & 0 \\ w & 0 & 1 \end{bmatrix}, 
\begin{bmatrix} 1 & 0 & 0 \\ w-1 & 1 & 0 \\ w & 0 & 1 \end{bmatrix}, \\
\begin{bmatrix} 1 & 0 & 0 \\ 0 & 1 & 0 \\ 0 & w & 1 \end{bmatrix}, 
\begin{bmatrix} 1 & 0 & 0 \\ w_2 & w_1+1 & 0 \\ w_2 & w_1 & 1 \end{bmatrix}, 
\begin{bmatrix} 1 & 0 & 0 \\ w_2 -1 & w_1+1 & 0 \\ w_2 & w_1 & 1 \end{bmatrix},
\end{multline*}
respectively, according to 7 cases of the path $\gamma^T ( n_{j(k)}, n_{j(k+1)} )$.
Here $w = w_1 + w_2$, $w_1 \ge 1$, $w_2 \ge 1$.
Compared with $\bar Z(k, k+1) = \begin{pmatrix} 1 & 0 \\ w & 1 \end{pmatrix},$
we have 
$$\| \bar Z(k, k+1) \| = w + 2 < \| Z(j(k),j(k+1)) \| \le 2w + 4 = 2 \| \bar Z(k, k+1) \|.$$

By the symmetry we have the same inequality for $\gamma^{\bar T}(\bar n_k, \bar n_{k+1}) = \bar C(\bar A)^{\bar n_{k+1}-\bar n_k}$.
\end{proof}

\begin{theorem}\label{3iem_th}
The 3-interval exchange map $T$ satisfies Condition (Z) if and only if 
the irrational rotation $\bar T$, which induces $T$, is of Roth's type.
\end{theorem}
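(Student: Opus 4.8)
The plan is to compare the Zorich acceleration of $T$ directly with the Zorich acceleration of the inducing rotation $\bar T$, using the machinery developed in Proposition~\ref{3iem_prop} and Lemmas~\ref{3iem_lem1} and~\ref{3iem_lem2}. Recall that $\bar T$ is of Roth's type precisely when it satisfies Condition (Z) for $d=2$, i.e.\ when for every $\varepsilon>0$ there is $C_\varepsilon$ with $\|\bar Z(k,k+1)\|\le C_\varepsilon\|\bar Z(k)\|^\varepsilon$ for all $k$ (this is exactly the footnote identification $\|\bar Z(k,k+1)\|=a_{k+1}+2$, $\|\bar Z(k)\|=q_k+q_{k-1}$). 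So the theorem reduces to showing that the growth condition $\|Z(j,j+1)\|\le C_\varepsilon\|Z(j)\|^\varepsilon$ for $T$ is equivalent to the same condition for $\bar T$.

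First I would set up the correspondence $k\mapsto j(k)$ from Lemma~\ref{3iem_lem2}: we have $\ell(n_{j(k)})=\bar n_k$, the comparison $\|\bar Z(k,k+1)\|<\|Z(j(k),j(k+1))\|\le 2\|\bar Z(k,k+1)\|$, and $j(k+1)\le j(k)+3$. Combining the latter with Lemma~\ref{3iem_lem1} applied at $n=n_{j(k)}$ (together with $\bar Q(\ell(n_{j(k)}))=\bar Z(k)$ and $Q(n_{j(k)})=Z(j(k))$) gives $\tfrac12\|\bar Z(k)\|\le\|Z(j(k))\|\le 2\|\bar Z(k)\|$. Thus along the subsequence $j(k)$ the two cocycles are comparable up to bounded multiplicative constants, and the one-step matrices are comparable up to a factor of $2$.

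Next comes the passage between "all $j$" and "the subsequence $j(k)$". For the direction $\bar T$ Roth $\Rightarrow$ $T$ satisfies (Z): any index $j$ with $1\le j$ lies in some block $j(k)<j\le j(k+1)$ of length at most $3$, so $\|Z(j,j+1)\|\le\|Z(j(k),j(k+1))\|$ and $\|Z(j)\|\ge\|Z(j(k))\|$ (the norms are nondecreasing in $j$ since the matrices $Q(j,j+1)$ have nonnegative entries and contain $\mathbb I$). Hence $\|Z(j,j+1)\|\le\|Z(j(k),j(k+1))\|\le 2\|\bar Z(k,k+1)\|\le 2C_{\varepsilon}\|\bar Z(k)\|^{\varepsilon}\le 2C_\varepsilon(2\|Z(j(k))\|)^\varepsilon\le C'_\varepsilon\|Z(j)\|^\varepsilon$. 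Conversely, if $T$ satisfies (Z), then for each $k$ the block $\gamma^T(n_{j(k)},n_{j(k+1)})$ has at most $3$ Zorich steps, so $\|Z(j(k),j(k+1))\|$ is bounded by a product of at most three factors $\|Z(j,j+1)\|$ with $j(k)\le j<j(k+1)$, each of which is $\le C_\varepsilon\|Z(j)\|^\varepsilon\le C_\varepsilon(2\|\bar Z(k+1)\|)^\varepsilon$; using $\|\bar Z(k+1)\|\le\|\bar Z(k,k+1)\|\,\|\bar Z(k)\|$ and $\|\bar Z(k,k+1)\|<\|Z(j(k),j(k+1))\|$ one obtains, after absorbing constants, $\|\bar Z(k,k+1)\|\le C''_\varepsilon\|\bar Z(k)\|^{\varepsilon'}$ for a new exponent $\varepsilon'$ that tends to $0$ with $\varepsilon$; since $\varepsilon$ is arbitrary this is exactly the Roth condition for $\bar T$.

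The main obstacle I anticipate is bookkeeping rather than conceptual: one must be careful that the seven cases in Lemma~\ref{3iem_lem2} genuinely cover every way the Zorich blocks of $T$ and $\bar T$ can line up, and that when a single $\bar T$-block $\bar A(\bar C)^w$ splits into up to three $T$-Zorich-blocks the total norm really is controlled by $w$ (this is the content of the explicit matrix list, where $w=w_1+w_2$). One should also check that the "empty arrow" steps $A(B)$ and $B(A)$, which do not advance $\ell(n)$, occur only a bounded number of times between consecutive $j(k)$'s — which is precisely why $j(k+1)\le j(k)+3$ — so that they cannot accumulate and destroy the comparison of norms. Once these combinatorial facts are in hand, the two implications above are routine, and the theorem follows.
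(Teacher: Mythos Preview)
Your proposal is correct and follows essentially the same route as the paper: both directions are obtained by sandwiching each Zorich step of $T$ inside a block $[j(k),j(k+1)]$ of length at most~$3$, then transferring the norm estimates via Lemma~\ref{3iem_lem1} and Lemma~\ref{3iem_lem2}. The only cosmetic difference is that in the (Z)$\Rightarrow$Roth direction the paper bounds $\|Z(j)\|$ for $j(k)\le j<j(k+1)$ iteratively by $C_\varepsilon^{\,\cdots}\|Z(j(k))\|^{(1+\varepsilon)^{j-j(k)}}$ and then passes to $\|\bar Z(k)\|$, whereas you bound $\|Z(j)\|\le 2\|\bar Z(k+1)\|$ and bootstrap via $\|\bar Z(k+1)\|\le\|\bar Z(k,k+1)\|\,\|\bar Z(k)\|$; both yield an exponent tending to~$0$ with~$\varepsilon$.
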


\begin{proof}
Suppose that the 3-interval exchange map $T$ satisfies Condition (Z). 
Then for any $\varepsilon >0$ we have $C_\varepsilon >0$ such that
$ \| Z(k,k+1) \| \le C_\varepsilon \| Z(k)\|^\varepsilon. $
Therefore we have by Lemma~\ref{3iem_lem2}
\begin{align*}
\| \bar Z(k,k+1) \| &< \| Z(j(k),j(k+1))\| \le \| Z(j(k),j(k)+3)\| \\
&\le \| Z(j+2,j+3) \| \cdot \| Z(j+1,j+2) \| \cdot \| Z(j+1,j) \| \\
&\le C_\varepsilon^3 \| Z(j+2) )\|^\varepsilon \cdot \| Z(j+1)\|^\varepsilon \cdot \| Z(j)\|^\varepsilon \\
&\le C_\varepsilon^{3+3\varepsilon+\varepsilon^2} \| Z(j(k))\|^{3\varepsilon + 3 \varepsilon^2 + \varepsilon^3} 
\le \left(2^\varepsilon C_\varepsilon \right)^{3+3\varepsilon+\varepsilon^2} \| \bar Z(k)\|^{3\varepsilon + 3 \varepsilon^2 + \varepsilon^3},
\end{align*}
where the last inequality is from Lemma~\ref{3iem_lem1}.

For the opposite direction we assume that $\bar T$ is of Roth's type:
For any $\varepsilon >0$ there is $\bar C_\varepsilon >0$ such that
$\| \bar Z(k',k'+1) \| \le C_\varepsilon \| \bar Z(k')\|^\varepsilon. $
For each $k$, we can find $k'$ such that $j(k') \le k < k+1 \le j(k' +1)$. 
Therefore we have by Lemma~\ref{3iem_lem2} and \ref{3iem_lem1}
\begin{align*}
\| Z(k, k+1) \| &\le \| Z(j(k'), j(k'+1)) \| \le 2 \| \bar Z(k',k'+1) \| \\
&\le \bar C_\varepsilon \| \bar Z(k')\|^\varepsilon 
\le 2^\varepsilon \bar C_\varepsilon \| Z(k)\|^\varepsilon. 
\end{align*}

\end{proof}

\section{Example with Condition (R) without Condition (Z)}\label{exm_sec1}

In this section, we discuss an example of 4 interval exchange map such that satisfies Condition (R) but not Condition (Z).

\begin{figure}\caption{Rauzy diagram for the example}\label{diagram4}
\includegraphics{graphic_47.mps} 
\end{figure}

Let $T$ be a 4-interval exchange map with the permutation data $\pi^{(0)} = \bigl( \begin{smallmatrix} A&B&D&C\\D&A&C&B \end{smallmatrix}\bigr)$.
Assume that the length data of $T$ is determined by the infinite path in the Rauzy diagram, denoted by the winner of each arrow (see Figure~\ref{diagram4})
$$ C^{s_1} B \left( D^2 A^3 D \right)^{2+1} B  \cdot
C^{s_2} B \left( D^2 A^3 D \right)^{2^2+2} B \cdots C^{s_k} B \left( D^2 A^3 D \right)^{2^k+k} B \cdots. $$

Let $$\ell_k = \sum_{i=1}^k (s_i + 6 \cdot 2^i+i + 2), \ \ell_0 = 0, \
\text{ and } \
s_k = F_{2^{k+1}}.$$
The matrix associated to the path $ C^{s_k} B \left( D^2 A^3 D\right)^{2^k+k} B $ is
\begin{align*}
Q(\ell_{k-1},\ell_k) &= 
\begin{bmatrix} 1 & 0 & 0 & 0 \\ 0 & 1 & 0 & 0 \\ 0 & 0 & 1 & 0 \\ 0 & 1 & 0 & 1 \end{bmatrix} 
\begin{bmatrix} 2 & 0 & 0 & 1 \\ 1 & 1 & 0 & 1 \\ 1 & 0 & 1 & 1 \\ 1 & 0 & 0 & 1 \end{bmatrix}^{2^k+k}  
\begin{bmatrix} 1 & 0 & 0 & 0 \\ 0 & 1 & s_k & 0 \\ 0 & 1 & s_k + 1 & 0 \\ 0 & 0 & 0 & 1 \end{bmatrix}  \\
&=\begin{bmatrix} F_{2^{k+1} + 2k+1} & 0 &0 &F_{2^{k+1} + 2k} \\
F_{2^{k+1} + 2k+1}-1 & 1 & F_{2^{k+1}} & F_{2^{k+1} + 2k} \\
F_{2^{k+1} + 2k+1}-1 & 1 & F_{2^{k+1}} + 1 & F_{2^{k+1} + 2k}  \\
F_{2^{k+1} + 2k+2}-1 & 1 & F_{2^{k+1}} & F_{2^{k+1} + 2k+1} \end{bmatrix},
\end{align*}
where $F_n$ is the Fibonacci sequence: $F_{-1}=1, F_0 = 0, F_{n+1} = F_n + F_{n-1}$.
Note that $F_n = \frac{1}{\sqrt 5} (g^{n} - (-g)^{-n})$, $g = \frac{\sqrt 5 + 1}{2}$. 

The following lemma provides a rough but useful estimate on the relative size as well as on the growth rate  of the sequence 
$(Q_\alpha(\ell_k))_{k \ge 1}$.

\begin{lemma}\label{l1}
For all  $k \ge 1$ we have 
$$1 < \frac{Q_D (\ell_k)}{g Q_A(\ell_k)} < \frac{Q_B (\ell_k)}{Q_A(\ell_k)} <\frac{Q_C (\ell_k)}{Q_A(\ell_k)} <  1 + \frac{1}{g^{2k+1}}. $$
$$ Q_A (\ell_k) \le g^{2^{k+1}+2k+1} Q_A (\ell_{k-1})$$
\end{lemma}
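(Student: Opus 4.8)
The plan is to extract everything from the explicit formula for $Q(\ell_{k-1},\ell_k)$ and the recursion $Q(\ell_k) = Q(\ell_{k-1},\ell_k)\,Q(\ell_{k-1})$, so that the entire statement reduces to elementary Fibonacci estimates. First I would record the row sums of the matrix $M_k := Q(\ell_{k-1},\ell_k)$ displayed above. Reading off the four rows, the vectors of return times over one block satisfy
\begin{align*}
Q_A(\ell_{k-1},\ell_k) &= F_{2^{k+1}+2k+1} + F_{2^{k+1}+2k} = F_{2^{k+1}+2k+2},\\
Q_D(\ell_{k-1},\ell_k) &= F_{2^{k+1}+2k+1} + 1 + F_{2^{k+1}} + 1 + F_{2^{k+1}+2k} = F_{2^{k+1}+2k+2} + F_{2^{k+1}} + 2,\\
Q_B(\ell_{k-1},\ell_k) &= Q_D(\ell_{k-1},\ell_k) - 1,\\
Q_C(\ell_{k-1},\ell_k) &= F_{2^{k+1}+2k+2} - 1 + 1 + F_{2^{k+1}} + F_{2^{k+1}+2k+1} = F_{2^{k+1}+2k+3} + F_{2^{k+1}} + 1.
\end{align*}
Then $Q_\bullet(\ell_k) = \sum_{\gamma} (M_k)_{\bullet\gamma}\, Q_\gamma(\ell_{k-1})$ with the convention that only the three letters $A,B,C$ (not $D$) receive a $Q_D$-contribution in the bottom block, but it is cleaner simply to use $Q(\ell_k)_\bullet = \sum_\gamma (M_k Q(\ell_{k-1}))_{\bullet\gamma}$ from $Q(\ell_k)=M_k Q(\ell_{k-1})$ and expand the matrix product symbolically once. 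The key qualitative fact, visible from the columns of $M_k$, is that the ``$A$'' and ``$C$'' columns are by far the largest (their entries are of order $F_{2^{k+1}+2k}$), while the $B$-column is all-ones and the $D$-column has entries $0,F_{2^{k+1}},F_{2^{k+1}}+1,F_{2^{k+1}}$; hence after one step the four components of $Q(\ell_k)$ are dominated by the contributions of $Q_A(\ell_{k-1})$ and $Q_C(\ell_{k-1})$.

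For the first chain of inequalities I would induct on $k$, carrying as the inductive hypothesis precisely the displayed ordering
$$1 < \frac{Q_D(\ell_{k-1})}{g\,Q_A(\ell_{k-1})} < \frac{Q_B(\ell_{k-1})}{Q_A(\ell_{k-1})} < \frac{Q_C(\ell_{k-1})}{Q_A(\ell_{k-1})} < 1 + g^{-(2k-1)}$$
(with a suitable direct check at $k=1$ as the base case, using $Q(\ell_0)=\mathbb{I}$, i.e.\ $Q_\bullet(\ell_0)=1$). Writing $Q(\ell_k)=M_k Q(\ell_{k-1})$, each $Q_\bullet(\ell_k)$ is a fixed nonnegative integer combination of $Q_A(\ell_{k-1}),Q_B(\ell_{k-1}),Q_C(\ell_{k-1}),Q_D(\ell_{k-1})$ whose coefficients are the entries of $M_k$. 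Dividing through by $Q_A(\ell_k)$ and using the inductive bounds $Q_B,Q_C,Q_D \in [\,g,\,1+g^{-(2k-1)}\,]\cdot Q_A(\ell_{k-1})$ up to the stated orderings, the four ratios $Q_\bullet(\ell_k)/Q_A(\ell_k)$ become ratios of explicit Fibonacci expressions plus error terms of relative size $O(g^{-2^{k+1}})$ coming from the ``$+1$'', ``$+2$'' and $B$-column contributions. Since $F_n = \tfrac1{\sqrt5}(g^n-(-g)^{-n})$, ratios like $F_{2^{k+1}+2k+1}/F_{2^{k+1}+2k+2}$ differ from $1/g$ by $O(g^{-2(2^{k+1}+2k)})$, far smaller than the target gap $g^{-(2k+1)}$; this is what makes the induction close. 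The ordering $Q_D < g\cdot(\text{something}) < Q_B < Q_C$ is then just a matter of comparing the leading Fibonacci indices ($2^{k+1}+2k$ for the $A,D$ rows versus $2^{k+1}+2k+1$ for the $C$ row) and noting the $+1$ that distinguishes the $B$-row from the $D$-row in the third column.

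For the growth bound $Q_A(\ell_k)\le g^{2^{k+1}+2k+1}Q_A(\ell_{k-1})$, I would use that $Q_A(\ell_k) = \sum_\gamma (M_k)_{A\gamma} Q_\gamma(\ell_{k-1}) = F_{2^{k+1}+2k+1}Q_C(\ell_{k-1}) + F_{2^{k+1}+2k}Q_A(\ell_{k-1})$ (the $A$-row of $M_k$ has zero in the $B$- and $D$-columns), then bound $Q_C(\ell_{k-1}) \le (1+g^{-(2k-1)})Q_A(\ell_{k-1})$ from the first part and $F_{2^{k+1}+2k+1}+F_{2^{k+1}+2k} = F_{2^{k+1}+2k+2} \le g^{2^{k+1}+2k+1}$ (together with the slack from the factor $1+g^{-(2k-1)}$ being absorbed). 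The main obstacle is bookkeeping: one must be careful that the coefficient matrix $M_k$ is applied on the correct side and that the small additive constants ($+1,+2$) and the all-ones $B$-column genuinely contribute only lower-order terms at every step; once the estimate $|F_n/F_{n+1}-g^{-1}| = O(g^{-2n})$ and the analogous two-step estimates are in hand, the rest is routine. Everything else — the permutation data, the shape of $M_k$ — is given, and no result beyond the displayed formulas and the Fibonacci identity is needed.
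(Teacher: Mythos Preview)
Your overall strategy is exactly the paper's: set up the ratios $r_B,r_C,r_D$, verify the base case $k=1$ directly from the matrix $M_k=Q(\ell_{k-1},\ell_k)$ (since $Q(\ell_0)=\mathbb{I}$), and then induct using $Q_\alpha(\ell_k)=\sum_\beta (M_k)_{\alpha\beta}Q_\beta(\ell_{k-1})$ together with the asymptotics $F_{n+1}/F_n\to g$. So conceptually you are on the right track and nothing beyond the displayed matrix and elementary Fibonacci identities is needed.

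However, there is a systematic bookkeeping error that, if left uncorrected, makes the written argument wrong. You have interchanged the labels of the third and fourth columns of $M_k$. In the matrix as displayed, the columns are indexed $A,B,C,D$ in that order: the $C$-column is $(0,\,F_{2^{k+1}},\,F_{2^{k+1}}+1,\,F_{2^{k+1}})^{t}$ (the small one) and the $D$-column is $(F_{2^{k+1}+2k},\,F_{2^{k+1}+2k},\,F_{2^{k+1}+2k},\,F_{2^{k+1}+2k+1})^{t}$ (the large one). Thus the dominant columns are $A$ and $D$, not $A$ and $C$, and the $A$-row has zeros in columns $B$ and $C$, not $B$ and $D$. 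Your row-sum computations for $Q_B,Q_C,Q_D$ inherit this swap (and pick up some stray $\pm 1$'s); the correct row sums are $Q_A=F_{2^{k+1}+2k+2}$, $Q_B=F_{2^{k+1}+2k+2}+F_{2^{k+1}}$, $Q_C=F_{2^{k+1}+2k+2}+F_{2^{k+1}}+1$, $Q_D=F_{2^{k+1}+2k+3}+F_{2^{k+1}}$.

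This swap matters most in your growth-bound step. The correct recursion for the $A$-component is
\[
Q_A(\ell_k)=F_{2^{k+1}+2k+1}\,Q_A(\ell_{k-1})+F_{2^{k+1}+2k}\,Q_D(\ell_{k-1}),
\]
not $F_{2^{k+1}+2k+1}Q_C(\ell_{k-1})+F_{2^{k+1}+2k}Q_A(\ell_{k-1})$. One then inserts the inductive bound $Q_D(\ell_{k-1})\le g(1+g^{-(2k-1)})Q_A(\ell_{k-1})$ (not the bound on $Q_C$) and uses $F_n\le g^n/\sqrt{5}$ to get
\[
Q_A(\ell_k)\le\Big(F_{2^{k+1}+2k+1}+gF_{2^{k+1}+2k}+g^{-(2k-2)}F_{2^{k+1}+2k}\Big)Q_A(\ell_{k-1})<g^{2^{k+1}+2k+1}Q_A(\ell_{k-1}),
\]
which is precisely the paper's computation. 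Once you fix the $C\leftrightarrow D$ labelling (and drop the parenthetical about ``only the three letters $A,B,C$ receive a $Q_D$-contribution'', which has no meaning here), the rest of your sketch is the paper's proof.
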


\begin{proof}
Let 
$$ \frac{Q_B (\ell_k)}{Q_A (\ell_k)} = 1 + r_B (k),
\quad \frac{Q_B (\ell_k)}{Q_A (\ell_k)} =  1 + r_C (k),
\quad \frac{Q_D (\ell_k)}{gQ_A (\ell_k)} =  1 + r_D (k).$$
Then 
$$ r_B(1) = \frac{F_4}{F_8}, \quad r_C(1) = \frac{F_4+1}{F_8}, \quad r_D(1) = \frac{F_9}{gF_8} - 1 + \frac{F_4}{gF_8}$$ 
so by simple calculations
$$ 0 < r_D(1) < r_B(1) < r_C(1) < \frac{1}{g^3}.$$ 
If $0 < r_D(k-1) < r_B(k-1) < r_C(k-1) < \frac{1}{g^{2k-1}}$, then using
\begin{equation*}\begin{split}
Q_\alpha (\ell_k) &= \sum_{\gamma} Q_{\alpha\gamma} (\ell_k) = \sum_{\gamma} \sum_\beta Q_{\alpha\beta} (\ell_{k-1},\ell_k) Q_{\beta\gamma}  (\ell_{k-1}) \\
&= \sum_{\beta} Q_{\alpha\beta}(\ell_{k-1},\ell_k) Q_\beta (\ell_{k-1}),
\end{split}\end{equation*}
we have
\begin{equation*}
\begin{split}
0 < r_D(k) < r_B(k) < r_C(k) &= \frac{r_B(k-1) + (F_{2^{k+1}} +1)(1+ r_C(k-1))}{F_{2^{k+1}+2k+1} + gF_{2^{k+1}+2k} ( 1 + r_D(k-1))} \\
&< \frac{F_{2^{k+1}} + r_C(k-1)F_{2^{k+1}} + 2}{ 2gF_{2^{k+1}+2k}}
 < \frac{1}{g^{2k+1}}
\end{split}
\end{equation*}

We also have for $k \ge 1$
\begin{equation*}
\begin{split}
Q_A (\ell_k) &= \left( F_{2^{k+1}+2k+1} + gF_{2^{k+1}+2k} + \frac{F_{2^{k+1}+2k}}{g^{2k}} \right)Q_A (\ell_{k-1}) \\
&< \frac{1}{\sqrt 5} \left( 2g^{2^{k+1}+2k+1} + g^{2^{k+1}} \right) Q_A(\ell_{k-1})
< g^{2^{k+1}+2k+1} Q_A(\ell_{k-1}).
\end{split}
\end{equation*}
\end{proof}

By the previous lemma we have
\begin{align*}
\| Q(\ell_k) \| &< \left( 3 + g + \frac{3}{g^{2k}} \right) Q_A(\ell_k) 
< g^4 \cdot g^{2^{k+1}+2k+1} \cdots g^{2^{2}+2+1} Q_A(\ell_0) \\
&< g^{2^{k+2}+k(k+1)+k}.
\end{align*}
Since
$$ Q(\ell_k, \ell_k+s_{k+1}) = \begin{bmatrix} 1 & 0 & 0 & 0 \\ 0 & 1 & s_{k+1} & 0 \\ 0 & 0 & 1 & 0 \\ 0 & 0 & 0 & 1 \end{bmatrix},$$
for large $k$
$$ \| Q(\ell_k) \|^{1/2} < g^{2^{k+1} +k^2/2 + 2k}
\le \frac{g^{2^{k+2}}}{\sqrt{5}} < F_{2^{k+2}} + 4 = \| Q(\ell_k, \ell_k+s_{k+1})\|,$$
which implies that this interval exchange map $T$ does not satisfy Condition (Z).

Since $ \lambda(\ell_{k+1})  Q(\ell_{k}, \ell_{k+1} ) = \lambda(\ell_k)$, 
the length data of $T(\ell_k)$, 
$\lambda(\ell_k)$ is a vector in the simplex with the vertexes
\begin{align*}
\lambda^*(\ell_{k+1}) & \bigl[ F_{2^{k+2} + 2k +3} , 0 , 0 , F_{2^{k+2} + 2k+2} \bigr], \\
\lambda^*(\ell_{k+1}) & \bigl[ F_{2^{k+2} + 2k+3}-1, 1 , F_{2^{k+2}} , F_{2^{k+2} + 2k+2} \bigr], \\
\lambda^*(\ell_{k+1}) & \bigl[ F_{2^{k+2} + 2k+3}-1 , 1 , F_{2^{k+2}}+1 , F_{2^{k+2} + 2k+2} \bigr],  \\
\lambda^*(\ell_{k+1}) & \bigl[F_{2^{k+2} + 2k+4}-1 , 1 , F_{2^{k+2}} , F_{2^{k+2} + 2k+3} \bigr]. 
\end{align*}
Therefore we have
$$ \lambda^*(\ell_{k}) < \left( F_{2^{k+2}+2k+5} + F_{2^{k+2}} \right) \lambda^*(\ell_{k+1}) < g^{2^{k+2}+2k+3} \lambda^*(\ell_{k+1}) ,
$$
$$ \lambda_B(\ell_k) < \lambda^*(\ell_{k+1}) < \lambda_C (\ell_k) < \lambda_D(\ell_k) < \lambda_A(\ell_k)$$
and
\begin{equation*}
\lambda_C(\ell_k) <  \frac{(F_{2^{k+2}}+1)\lambda^*(\ell_{k})}{F_{2^{k+2} +2k+4}+F_{2^{k+2}}+1} < \frac{\lambda^*(\ell_{k})}{g^{2k+3}},
\end{equation*}
$$\frac{1}{g^3} < \frac{F_{2^{k+2} + 2k+2}}{F_{2^{k+2} + 2k+4}+F_{2^{k+2}} +1} < \frac{\lambda_D(\ell_k)}{\lambda^* (\ell_k)} < \frac{F_{2^{k+2} + 2k+2}}{F_{2^{k+2} + 2k+4}} < \frac{1}{g^2}.$$

Using the relation
$$\sum_\alpha \lambda_\alpha (\ell_k) Q_\alpha (\ell_k) = 1$$
we have by Lemma~\ref{l1}
\begin{equation}\label{qalambda}
\lambda^* (\ell_k) < \frac 1{Q_A(\ell_k)} < \lambda^* (\ell_k) + \frac{\lambda_D (\ell_k)}{g} + \frac{\lambda_B (\ell_k)+\lambda_C (\ell_k)+\lambda_D (\ell_k)}{g^{2k+1}} < g \lambda^* (\ell_k).
\end{equation}
We also have 
\begin{equation*}
\lambda_B(\ell_k)Q_B(\ell_k) < \lambda_C(\ell_k)Q_C(\ell_k) 
< \frac{\lambda^*(\ell_{k})}{g^{2k+3}} \left(1+ \frac{1}{g^{2k+1}} \right) Q_A(\ell_k) < \frac1{g^{2k+2}}.
\end{equation*}

By the permutation data $\pi^{(\ell_k)} = \bigl( \begin{smallmatrix} A&B&D&C\\D&A&C&B \end{smallmatrix}\bigr)$
we have
\begin{equation*}
T(\ell_k) (x) = 
\begin{cases} x + \lambda_D(\ell_k) &\text{ for } x \in I_A(\ell_k) , \\
x - \left( \lambda_A(\ell_k) +  \lambda_B(\ell_k) \right) &\text{ for } x \in I_D(\ell_k), \\
x  - \lambda_B (\ell_k) &\text{ for } x \in I_C (\ell_k).\end{cases}
\end{equation*}

Let $\tilde T_k$ be the 2-interval exchange map on $[0,\lambda_A(\ell_k)+\lambda_B(\ell_k)+\lambda_D(\ell_k) ) = [0,\lambda^*(\ell_k+s_{k+1}+1))$ with $\lambda_{\tilde A} (\ell_k) = \lambda_A (\ell_k) + \lambda_B (\ell_k)$ and $\lambda_{\tilde D} (\ell_k) = \lambda_D (\ell_k)$.
Then $$ T(\ell_k) (x) = T(\ell_k+s_{k+1}+1) (x) = \tilde T_k (x) \text{ on }  x \in I_A(\ell_k) \cup I_D(\ell_k).$$
\begin{lemma}\label{l}
If 
$$x \in \left( I_A(\ell_k) \cup I_D(\ell_k)\right) \setminus \left( \bigcup_{i=0}^{m} T(\ell_k)^{-i} I_B(\ell_k) \right), $$ 
then we have 
$$ T(\ell_k)^i (x) = \tilde T_k^i (x), \text{ for } 0 \le i < m. $$
\end{lemma}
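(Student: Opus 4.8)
The plan is to prove Lemma~\ref{l} by a short induction on $i$, in which the inductive hypothesis carries two pieces of information at once: that the two orbits agree up to time $i$, and that the point $T(\ell_k)^i(x)$ still lies in $I_A(\ell_k)\cup I_D(\ell_k)$. Write $y_i=T(\ell_k)^i(x)$; the hypothesis on $x$ says exactly that $y_i\notin I_B(\ell_k)$ for every $i$ with $0\le i\le m$, and $\tilde T_k^{\,i}(x)$ is well defined for all $i$ because $x\in I_A(\ell_k)\cup I_D(\ell_k)$ lies in the domain $[0,\lambda_A(\ell_k)+\lambda_B(\ell_k)+\lambda_D(\ell_k))$ of the bijection $\tilde T_k$.

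First I would isolate the two facts already recorded just before the statement: (i) $T(\ell_k)$ and $\tilde T_k$ coincide on $I_A(\ell_k)\cup I_D(\ell_k)$; and (ii) since $\tilde T_k$ is a two-interval exchange of $[0,\lambda_A(\ell_k)+\lambda_B(\ell_k)+\lambda_D(\ell_k))=I_A(\ell_k)\cup I_B(\ell_k)\cup I_D(\ell_k)$, it maps this set bijectively onto itself. The induction then runs as follows. For $i=0$ we have $y_0=x=\tilde T_k^{\,0}(x)\in I_A(\ell_k)\cup I_D(\ell_k)$ by assumption. Assuming $y_i=\tilde T_k^{\,i}(x)\in I_A(\ell_k)\cup I_D(\ell_k)$ for some $i<m$, fact (i) gives $y_{i+1}=T(\ell_k)(y_i)=\tilde T_k(y_i)=\tilde T_k^{\,i+1}(x)$, so the orbits still agree at time $i+1$; fact (ii) gives $y_{i+1}\in I_A(\ell_k)\cup I_B(\ell_k)\cup I_D(\ell_k)$; and since $i+1\le m$, the hypothesis on $x$ forbids $y_{i+1}\in I_B(\ell_k)$. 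Hence $y_{i+1}\in I_A(\ell_k)\cup I_D(\ell_k)$, which closes the induction and in fact yields $T(\ell_k)^i(x)=\tilde T_k^{\,i}(x)$ for all $0\le i\le m$, slightly more than the stated range.

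There is no serious obstacle here. The one point that needs care --- and the reason the hypothesis is phrased in terms of $I_B(\ell_k)$ rather than, say, a return time --- is that $\tilde T_k$ is simply not defined on $I_C(\ell_k)$, so one must guarantee that the orbit never enters $I_C(\ell_k)$. This is precisely what (ii) together with the avoidance of $I_B(\ell_k)$ delivers at each step, since the $\tilde T_k$-orbit cannot escape $I_A(\ell_k)\cup I_B(\ell_k)\cup I_D(\ell_k)$ and the complement of $I_B(\ell_k)$ inside that set is $I_A(\ell_k)\cup I_D(\ell_k)$.
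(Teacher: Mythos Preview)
Your proof is correct. The paper states this lemma without proof, relying on the observation immediately preceding it that $T(\ell_k)=\tilde T_k$ on $I_A(\ell_k)\cup I_D(\ell_k)$; your induction supplies precisely the routine verification the paper omits, and your two facts (i) and (ii) are exactly what is needed to close each step.
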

Note that
\begin{equation*}
\frac{\lambda^*(\ell_k + s_{k+1}+1)}{\lambda^* (\ell_k)}
= \frac{\lambda_{A} (\ell_k) + \lambda_{B} (\ell_k) + \lambda_{D} (\ell_k) }{\lambda^* (\ell_k)}  = 1 - \frac{\lambda_C (\ell_k)}{\lambda^* (\ell_k)} >1 - \frac{1}{g^{2k+3}}. 
\end{equation*}
Moreover, we have
$$ \frac{F_{2^{k+2} + 2k+4}}{F_{2^{k+2} + 2k+5}} < \frac{\lambda_{\tilde A} (\ell_k)}{\lambda_{\tilde A} (\ell_k) + \lambda_{\tilde D} (\ell_k)} < \frac{F_{2^{k+2} + 2k+3}}{F_{2^{k+2} + 2k+4}}, $$ 
so $$ \left| \frac{\lambda_{\tilde A} (\ell_k)}{\lambda_{\tilde A} (\ell_k) + \lambda_{\tilde D} (\ell_k)} - \frac 1g \right | < \frac{1}{g^{2^{k+3}+4k+6}}. $$ 

Let $R_k (x)$ be the irrational rotation by $\frac{\lambda^*(\ell_k + s_{k+1} +1)}{g}$ on $[0,\lambda^*(\ell_k + s_{k+1} +1))$.

\begin{lemma}\label{ll}
For each $x \in [0, \lambda^*(\ell_k + s_{k+1} +1))$
$$ \left | \tilde T_k^i (x) - R_k^i(x) \right | < \frac{i\lambda^*(\ell_k + s_{k+1} +1)}{g^{2^{k+3}+4k+6}}.$$
\end{lemma}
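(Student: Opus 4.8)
The plan is to use that both $\tilde T_k$ and $R_k$ are rotations of one and the same circle, to bound the difference of their rotation amounts by the estimate displayed immediately before the lemma, and then to conclude by the elementary fact that two circle rotations with nearly equal rotation amounts have orbits that drift apart at most linearly in the number of iterates.

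Concretely, I would put $L := \lambda^*(\ell_k + s_{k+1}+1) = \lambda_{\tilde A}(\ell_k) + \lambda_{\tilde D}(\ell_k)$ and regard both maps as maps of the circle $\mathbb R/L\mathbb Z$. By construction $\tilde T_k$ is the $2$-interval exchange of $[0,L)$ whose two intervals have lengths $\lambda_{\tilde A}(\ell_k)$ and $\lambda_{\tilde D}(\ell_k)$, hence on $\mathbb R/L\mathbb Z$ it is a rotation; and $R_k$ is the rotation by $L/g$. The first step is to check that the rotation amounts of $\tilde T_k$ and $R_k$ are $\delta$-close, where
\[
\delta := \frac{\lambda^*(\ell_k+s_{k+1}+1)}{g^{2^{k+3}+4k+6}} = \frac{L}{g^{2^{k+3}+4k+6}} .
\]
The rotation amount of $\tilde T_k$ is $\lambda_{\tilde D}(\ell_k) = L-\lambda_{\tilde A}(\ell_k)$ (equivalently $-\lambda_{\tilde A}(\ell_k)\bmod L$), so, matching orientations with $R_k$ and using $1-1/g=1/g^2$, the relevant discrepancy of rotation amounts is $L\bigl|\lambda_{\tilde A}(\ell_k)/L-1/g\bigr|$; multiplying the estimate displayed right before the lemma, $\bigl|\lambda_{\tilde A}(\ell_k)/(\lambda_{\tilde A}(\ell_k)+\lambda_{\tilde D}(\ell_k))-1/g\bigr|<g^{-(2^{k+3}+4k+6)}$, by $L$ shows that this discrepancy is less than $\delta$.

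The second and last step is a one-line induction on $i$ in the circle metric $d$ on $\mathbb R/L\mathbb Z$. Both $\tilde T_k$ and $R_k$ are $d$-isometries and $\tilde T_k^0(x)=R_k^0(x)=x$, so $d(\tilde T_k^0 x, R_k^0 x)=0$. If $d(\tilde T_k^i x, R_k^i x)\le i\delta$, then, since $\tilde T_k$ preserves $d$ and differs from $R_k$ by a translation of length $<\delta$,
\[
d(\tilde T_k^{i+1}x, R_k^{i+1}x) \le d\bigl(\tilde T_k\tilde T_k^i x, \tilde T_k R_k^i x\bigr) + d\bigl(\tilde T_k R_k^i x, R_k R_k^i x\bigr) \le i\delta + \delta .
\]
Hence $d(\tilde T_k^i x, R_k^i x)\le i\delta$ for all $i$, and in the range of $i$ used in the sequel this quantity is $<L/2$, so the circle distance coincides with the ordinary distance between the representatives in $[0,L)$, giving the asserted bound $\bigl|\tilde T_k^i(x) - R_k^i(x)\bigr| < i\delta = i\lambda^*(\ell_k+s_{k+1}+1)g^{-(2^{k+3}+4k+6)}$.

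The only point needing genuine care is the very first one: getting the orientations of the two rotations straight, so that the relevant gap of rotation amounts is $L\bigl|\lambda_{\tilde A}(\ell_k)/L-1/g\bigr|$ — a quantity of relative size $g^{-(2^{k+3}+4k+6)}$ — rather than something of order $L$. This is precisely what the preceding estimate, phrased in terms of $\lambda_{\tilde A}(\ell_k)/(\lambda_{\tilde A}(\ell_k)+\lambda_{\tilde D}(\ell_k))$, is set up to provide; everything after that is the routine linear error accumulation for circle rotations carried out above.
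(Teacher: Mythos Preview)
The paper states this lemma without proof, treating it as an immediate consequence of the estimate $\bigl|\lambda_{\tilde A}(\ell_k)/(\lambda_{\tilde A}(\ell_k)+\lambda_{\tilde D}(\ell_k))-1/g\bigr|<g^{-(2^{k+3}+4k+6)}$ displayed just before it. Your argument supplies exactly the intended fill-in: both maps are circle rotations of $[0,L)$ whose rotation amounts differ by at most $\delta=L\,g^{-(2^{k+3}+4k+6)}$, and the triangle-inequality induction then gives the linear drift $i\delta$.

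Two small remarks. First, you are right to flag the orientation: as literally written, $R_k$ rotates by $L/g$ while $\tilde T_k$ rotates by $\lambda_{\tilde D}\approx L/g^2$, so one must read $R_k$ as rotation by $L(1-1/g)=L/g^2$ (equivalently by $-L/g$) for the two to be $\delta$-close; this is harmless since Lemma~\ref{lll} only uses the Diophantine type of $1/g$, which is orientation-insensitive. Second, your induction bounds the \emph{circle} distance $d(\tilde T_k^i x,R_k^i x)$; the assertion that this equals $|\tilde T_k^i x-R_k^i x|$ whenever it is $<L/2$ is not quite true (the representatives in $[0,L)$ can still sit near opposite ends), but this does not matter for the subsequent use of the lemma, where only a lower bound on $|\tilde T_k^i y-y|$ is needed and the triangle inequality in the circle metric suffices.
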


By the celebrated theorem from Diophantine approximation we have
\begin{lemma}\label{lll}
For each $x \in [0, \lambda^*(\ell_k + s_{k+1} +1))$
$$ \left |  R_k^i(x) - x \right | > \frac{\lambda^*(\ell_k + s_{k+1} +1)}{2 i}.$$
\end{lemma}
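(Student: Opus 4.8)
The plan is to reduce the assertion to a classical Diophantine property of the golden mean and then read it off from the theory of continued fractions. First I would rescale to the unit circle: writing $L=\lambda^*(\ell_k+s_{k+1}+1)$ and conjugating $R_k$ by the dilation $y\mapsto y/L$ turns $R_k$ into the rigid rotation $t\mapsto t+g^{-1}\pmod 1$ of $\mathbb R/\mathbb Z$. On a circle of circumference $L$, the displacement of any point after $i$ steps of a rotation has absolute value at least $L\,\|i\,g^{-1}\|$, where $\|\cdot\|$ is the distance to the nearest integer, so $|R_k^i(x)-x|\ge L\,\|i\,g^{-1}\|$ for every $x$ and every $i\ge1$. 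Hence it suffices to prove an arithmetic estimate of the form $\|i\,g^{-1}\|\ge c/i$; indeed a fixed positive constant $c$ already suffices downstream, since this lemma is used only together with Lemma~\ref{ll}, which bounds $|\tilde T_k^i(x)-R_k^i(x)|$ by a quantity of size $iL\,g^{-(2^{k+3}+4k+6)}$, negligible next to $L/i$ throughout the range of $i$ that will ever be considered.

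Next I would invoke the continued fraction expansion $g^{-1}=[0;1,1,1,\dots]$: its convergents $p_n/q_n$ have denominators equal to the Fibonacci numbers, with $q_{n+1}=q_n+q_{n-1}$, hence $q_{n+1}\le 2q_n$ and $q_{n+2}\le 3q_n$ for all $n\ge1$. Given $i\ge1$, choose $n\ge1$ with $q_n\le i<q_{n+1}$. The best-approximation property of convergents gives $\|i\,g^{-1}\|\ge\|q_n g^{-1}\|$, and the standard two-sided estimate gives $\|q_n g^{-1}\|=|q_n g^{-1}-p_n|>(q_{n+1}+q_n)^{-1}=q_{n+2}^{-1}$. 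Combining with $q_{n+2}\le 3q_n\le 3i$ yields $\|i\,g^{-1}\|>\tfrac1{3i}$, and multiplying back by $L$ gives $|R_k^i(x)-x|\ge L\,\|i\,g^{-1}\|>\tfrac{L}{3i}$, which is (more than) what the subsequent estimates require; the sharp constant here, attained at $i=1$, is $g^{-2}=2-g$, obtained by using instead the exact identity $\|q_n g^{-1}\|=g^{-(n+1)}$ together with $q_{n+1}=F_{n+2}<g^{n+1}$.

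The main obstacle is not a deep one: it is the step from the inequality $\|i\,g^{-1}\|\ge\|q_n g^{-1}\|$, which only compares $i$ with the single convergent denominator just below it, to a bound uniform in $i$ with an explicit constant. This is exactly where the special arithmetic of the golden mean is used — all its partial quotients equal $1$, so consecutive convergent denominators grow by a bounded factor — and that same feature is what makes the resulting constant essentially optimal.
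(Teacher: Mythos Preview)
Your approach is exactly what the paper has in mind: the paper gives no proof here at all, merely invoking ``the celebrated theorem from Diophantine approximation,'' and your reduction to the lower bound $\|i\,g^{-1}\|\ge c/i$ via the continued fraction of the golden mean is precisely that theorem made explicit. The argument you give (best-approximation property plus $q_{n+2}\le 3q_n$ for the Fibonacci denominators) is clean and correct.

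You have also caught a real slip in the stated constant. For $i=1$ one has $|R_k(x)-x|\ge L\,\|g^{-1}\|=L\,g^{-2}=(2-g)L\approx 0.382\,L<L/2$, so the inequality with $\tfrac12$ fails; and since $\min_{i\ge1} i\,\|i/g\|=g^{-2}$ (attained at $i=1$), the sharp constant is $g^{-2}$, not $\tfrac12$. Your remark that any fixed positive constant suffices for the downstream application is correct as well: in the chain of inequalities following this lemma, the bound is immediately weakened to $L/(2m)$ and then fed through factors of $g$; replacing $\tfrac12$ by $g^{-2}$ (or by your $\tfrac13$) only shifts the exponent of $g$ by a bounded amount, and the final step $(g^{k+1}/\lambda^*(\ell_{k+1}))^{\varepsilon}\to\infty$ absorbs this with room to spare.
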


\begin{proposition}
We have 
$$ \lim_{r \to 0^+} \frac{\log \tau_{r}(x)}{-\log r} = 1, \text{ a.e. } x.$$
\end{proposition}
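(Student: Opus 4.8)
The plan is to prove the sharper two–sided bound: there are absolute constants $0<c<C$ so that for almost every $x$ and all sufficiently small $r>0$ one has $c\,r^{-1}\le\tau_r(x)\le C\,r^{-1}$, which of course gives $\log\tau_r(x)/(-\log r)\to1$. Since $r\mapsto\tau_r(x)$ is nonincreasing, it is enough to establish this along a countable set of scales and interpolate, so the ``almost every'' in the statement will come from a finite list of Borel--Cantelli arguments against exceptional sets of summable Lebesgue measure.

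Fix a small $r$ and let $k=k(r)$ be the index with $\lambda^*(\ell_{k+1})\le r<\delta\,\lambda^*(\ell_k)$ for a small absolute constant $\delta$; the displayed estimates (in particular $\lambda^*(\ell_{k+1})/\lambda^*(\ell_k)\le g^{-2^{k+2}-2k-3}$) show that these windows cover all small $r$, and (\ref{qalambda}) together with Lemma~\ref{l1} shows that $Q_A(\ell_k)$ and $Q_D(\ell_k)$ are both comparable to $1/\lambda^*(\ell_k)$. I pass to the model rotation $R_k$ of Lemma~\ref{ll}, whose rescaled rotation number is within $g^{-(2^{k+3}+4k+6)}$ of a golden ratio and hence shares all the continued–fraction behaviour of the golden rotation for radii down to $\asymp\lambda^*(\ell_{k+1})$; the three–distance theorem for the golden rotation then produces a first return time $j^*$ of the $R_k$–orbit of $x$ to an $r$–neighbourhood of $x$ with $c_0\,\lambda^*(\ell_k)/r\le j^*\le C_0\,\lambda^*(\ell_k)/r$, and the deviation estimate of Lemma~\ref{ll} is $O(r)$ on this range of times, so the same double inequality holds with $\tilde T_k$ in place of $R_k$. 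Now for almost every $x$: since $\lambda_C(\ell_k)<\lambda^*(\ell_k)g^{-2k-3}$ is summable, $x\in I_A(\ell_k)\cup I_D(\ell_k)$ for all large $k$; and since $\lambda_B(\ell_k)<\lambda^*(\ell_{k+1})$, the set $\bigcup_{i\le N_k}\tilde T_k^{-i}I_B(\ell_k)$ with $N_k:=\lambda^*(\ell_{k+1})^{-3/4}$ has measure at most $2\lambda^*(\ell_{k+1})^{1/4}$, again summable, so for almost every $x$ and all large $k$ the first $N_k$ iterates of the $\tilde T_k$–orbit of $x$ avoid $I_B(\ell_k)$; as $N_k\gg\lambda^*(\ell_k)/r\ge j^*$, this orbit stays in $I_A(\ell_k)\cup I_D(\ell_k)$ up to time $j^*$.

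By Lemma~\ref{l} the maps $T(\ell_k)$ and $\tilde T_k$ agree along this initial segment of the orbit of $x$, so $j^*$ is also the first return time of the $T(\ell_k)$–orbit of $x$ to the $r$–neighbourhood of $x$, and during these $j^*$ steps the $T(\ell_k)$–orbit visits only $I_A(\ell_k)$ and $I_D(\ell_k)$. Expressing $\tau_r(x)$ as the sum of the corresponding $T$–return times $Q_{\alpha_i}(\ell_k)\in\{Q_A(\ell_k),Q_D(\ell_k)\}$, $0\le i<j^*$, one gets
$$ j^*\,Q_A(\ell_k)\ \le\ \tau_r(x)\ \le\ j^*\,Q_D(\ell_k), $$
and since $Q_A(\ell_k),Q_D(\ell_k)\asymp1/\lambda^*(\ell_k)$ while $j^*\asymp\lambda^*(\ell_k)/r$, both sides are $\asymp1/r$, as required. (In the lower direction one also uses that, for almost every $x$ and all large $k$, $x$ lies at distance $\ge\delta\lambda^*(\ell_k)\ge r$ from the finitely many endpoints of $I_A(\ell_k)\cup I_D(\ell_k)$ inside $I(\ell_k)$ — another summable exceptional set — so that every $T$–iterate of $x$ landing within $r$ of $x$ is genuinely a return of $T$ to $I(\ell_k)$ and no recurrence is overlooked.)

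The main obstacle is the handling of the tiny interval $I_B(\ell_k)$: the rotation $\tilde T_k$ equidistributes, so its orbit cannot avoid $I_B(\ell_k)$ forever, and the point is that the index $\ell_k$ has been arranged so that $\lambda_B(\ell_k)<\lambda^*(\ell_{k+1})$ is small enough, relative to the window length $\asymp\lambda^*(\ell_k)/r$, for a Borel--Cantelli argument with a suitable exponent to force avoidance over a window far longer than the one actually needed — which is exactly what makes Lemmas~\ref{l}, \ref{ll} and \ref{lll} applicable. The rest is bookkeeping: checking that the various scales $\lambda^*(\ell_k)$, $\lambda_B(\ell_k)$, $\lambda_C(\ell_k)$, $Q_\alpha(\ell_k)$ and the cut–off $g^{-2^{k+3}}$ fit together as claimed, which follows from Lemma~\ref{l1} and the displayed estimates around (\ref{qalambda}).
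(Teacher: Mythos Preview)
There is a genuine gap in the Borel--Cantelli step for the $I_B$--avoidance, and it stems from a confusion between the point $x\in[0,1)$ and its base point in $I(\ell_k)$. As written, the claim ``for almost every $x$, $x\in I_A(\ell_k)\cup I_D(\ell_k)$ for all large $k$'' is vacuous: since $I_A(\ell_k)\cup I_D(\ell_k)\subset[0,\lambda^*(\ell_k))$ and $\lambda^*(\ell_k)\to0$, only $x=0$ can satisfy it. What you must mean is $\phi_k(x)\in I_A(\ell_k)\cup I_D(\ell_k)$, where $\phi_k$ is the projection to the base of the Rokhlin tower (as in the paper). Once you make this correction, every measure you sum in Borel--Cantelli has to be computed in $[0,1)$, i.e.\ pulled back through $\phi_k$, which multiplies base measures by roughly $Q_D(\ell_k)\asymp 1/\lambda^*(\ell_k)$.

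With that correction your $I_B$--avoidance argument collapses. Your bound $N_k\lambda_B(\ell_k)\le\lambda^*(\ell_{k+1})^{1/4}$ is a base--interval estimate; since $\lambda^*(\ell_{k+1})^{1/4}\asymp g^{-2^{k+1}}$ while $\lambda^*(\ell_k)\asymp g^{-2^{k+2}}$, this bound actually exceeds the length of the base interval, so it says nothing. In fact your $N_k\asymp g^{3\cdot 2^{k+1}}$ is much larger than $\lambda^*(\ell_k)/\lambda_B(\ell_k)\asymp g^{2^{k+2}}$, the typical entry time of a $\tilde T_k$--orbit into $I_B(\ell_k)$; hence $\bigcup_{i\le N_k}\tilde T_k^{-i}I_B(\ell_k)$ is essentially all of $I(\ell_k)$, and the lifted exceptional set is essentially all of $[0,1)$ for every $k$. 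Even if you shrink $N_k$ to the minimum your argument requires, namely $N_k\asymp\lambda^*(\ell_k)/\lambda^*(\ell_{k+1})\asymp g^{2^{k+2}}$ (the worst--case $j^*$ at $r=\lambda^*(\ell_{k+1})$), the lifted measure is $N_k\,\lambda_B(\ell_k)\,Q_D(\ell_k)\asymp g^{2^{k+2}}\cdot g^{-2^{k+3}}\cdot g^{2^{k+2}}=g^{0}$, a positive constant, still not summable.

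This is not a technicality: it is exactly why the paper does \emph{not} attempt $\tau_r\asymp 1/r$ but only $\tau_r\ge r^{-(1-\varepsilon)}$. The paper's analogue of your $N_k$ is $m=(g^{k+1}/\lambda^*(\ell_{k+1}))^{1-\varepsilon}/Q_A(\ell_k)$, and the extra factor $(\lambda^*(\ell_{k+1})/g^{k+1})^{\varepsilon}$ that this introduces into the lifted measure $m\,\lambda_B(\ell_k)\,Q_D(\ell_k)$ is precisely what makes the sum over $k$ converge (see the estimate for $\mu(G_k)$). Your sharper two--sided claim may or may not be true for this particular map, but the argument you give does not establish it; to recover the proposition you should drop the claim $\tau_r\ge c/r$ and reinstate an $\varepsilon$--slack as in the paper.
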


\begin{proof}
For a general Lebesgue measure preserving transformation on the interval 
it is well known (e.g. \cite{KK}) that  
$$ \limsup_{r \to 0^+} \frac{\log \tau_{r}(x)}{-\log r} \le 1, \text{ a.e. } x.$$
We only need to show the inferior limit is not smaller than 1.

Let $\mathcal P_k$ be the partition on $[0,1)$ consisting of 
$$T^i(I_\alpha(\ell_k)), \quad 0 \le i < Q_{\alpha}(\ell_k)$$
and $P_k(x)$ be the element of $\mathcal P_k$ which contains $x$.

Fix an $\varepsilon >0$. let
$$  
E_k = \left \{ x \in [0,1) : \tau_r(x) < r^{-(1-\varepsilon)} \text{ for some } \frac{\lambda^*(\ell_{k+1})}{g^{k+1}} < r \le \frac{\lambda^*(\ell_k)}{g^k} \right \}.
$$
There are two cases : $x$ and $T^{\tau_r(x)} (x)$ are in same $\mathcal P_n$ or not.
Therefore
$$ E_k \subset F_k \cup G_k$$ 
where 
\begin{align*}
F_k &= \left \{ x : \tau_r(x) < r^{-(1-\varepsilon)}, T^{\tau_r(x)} (x) \notin P_k(x) \text{ for some } \frac{\lambda^*(\ell_{k+1})}{g^{k+1}} < r \le \frac{\lambda^*(\ell_k)}{g^k} \right \}, \\
G_k &= \left \{ x : \tau_r(x) < r^{-(1-\varepsilon)}, T^{\tau_r(x)} (x) \in P_k(x) \text{ for some } \frac{\lambda^*(\ell_{k+1})}{g^{k+1}} < r \le \frac{\lambda^*(\ell_k)}{g^k} \right \}.
\end{align*}
Clearly we have
$$ 
F_k \subset \left \{ x \in [0,1) :  \min(x-a, b-x) \le \frac{\lambda^* (\ell_k)}{g^k} \text{ if } P_k(x) = [a,b) \right \}.
$$
Since $\lambda^*(\ell_k) < g^3 \lambda_D (\ell_k) < g^3 \lambda_A (\ell_k) $,
we have 
\begin{equation}\label{fkbound}
\begin{split}
\mu(F_k) &\le Q_A(\ell_k) \frac{2\lambda^* (\ell_k)}{g^k} + 
Q_D(\ell_k) \frac{2\lambda^* (\ell_k)}{g^k} + Q_B(\ell_k) \lambda_B (\ell_k) + Q_C(\ell_k) \lambda_C (\ell_k) \\
&< Q_A(\ell_k) \frac{2 \lambda_A (\ell_k)}{g^{k-3}} + 
Q_D(\ell_k) \frac{2 \lambda_D (\ell_k)}{g^{k-3}} + 2 Q_C(\ell_k) \lambda_C (\ell_k) \\
&\le \frac{2}{g^{k-3}} + \frac{2}{g^{2k+2}}.
\end{split}
\end{equation}

Let 
$$ m = \left( \frac{g^{k+1}}{\lambda^*(\ell_{k+1})} \right)^{1-\varepsilon} \frac{1}{\min_\alpha Q_\alpha (\ell_k)}
 = \left( \frac{g^{k+1}}{\lambda^*(\ell_{k+1})} \right)^{1-\varepsilon} \frac{1}{Q_A (\ell_k)}. $$
Choose $k$ big enough to
$$\left( \frac{\lambda^*(\ell_{k+1})}{g^{k+1}} \right)^{\varepsilon} 
< \left( \frac{\lambda^*(\ell_{k})}{g^{2^{k+2}+2k+3}g^{k+1}} \right)^{\varepsilon} < \frac{1}{g^{2k+4}}. $$
Then, by Lemma~\ref{ll}, for $y \in [0, \lambda^*(\ell_k + s_{k+1} +1))$ and $0 \le i < m$ we have
\begin{equation*}
\begin{split} 
\left | \tilde T_k^i (y) - R_k^i(y) \right | &< \frac{\lambda^*(\ell_k + s_{k+1} +1)}{g^{2^{k+3}+4k+6}}\cdot m
<\frac{\lambda^*(\ell_k)}{g^{2^{k+3}+4k+6}Q_A (\ell_k)}\left( \frac{g^{k+1}}{\lambda^*(\ell_{k+1})} \right)^{1-\varepsilon} \\
&= \frac{\lambda^*(\ell_{k+1})}{g^{2^{k+3}+3k+5}} \cdot\frac {1}{\lambda^*(\ell_k)Q_A (\ell_k)} \cdot \left( \frac{\lambda^*(\ell_k)}{\lambda^*(\ell_{k+1})} \right)^2 \cdot \left( \frac{\lambda^*(\ell_{k+1})}{g^{k+1}} \right)^{\varepsilon} \\
&< \frac{\lambda^*(\ell_{k+1})}{g^{2^{k+3}+3k+5}} \cdot g \cdot g^{2^{k+3}+4k+6} \cdot \left( \frac{\lambda^*(\ell_{k+1})}{g^{k+1}} \right)^{\varepsilon}
< \frac{\lambda^*(\ell_{k+1})}{g^{k+2}}.
\end{split}\end{equation*}
By Lemma~\ref{lll} for $y \in [0, \lambda^*(\ell_k + s_{k+1} +1))$ and $0 \le i < m$
\begin{equation*}
\begin{split}
\left | R_k^i(y) - y \right | &> \frac{\lambda^*(\ell_k + s_{k+1} +1)}{2} \cdot \frac 1m
> \frac{\lambda^*(\ell_k) }{g^2} \cdot Q_A (\ell_k) \left( \frac{\lambda^*(\ell_{k+1})}{g^{k+1}} \right)^{1-\varepsilon} \\
&> \frac{1}{g^3} \left( \frac{\lambda^*(\ell_{k+1})}{g^{k+1}} \right)^{1-\varepsilon} 
= \frac{\lambda^*(\ell_{k+1})}{g^{k}}\cdot \frac{1}{g^4}\cdot\left( \frac{g^{k+1}}{\lambda^*(\ell_{k+1})} \right)^{\varepsilon}
 > \frac{\lambda^*(\ell_{k+1})}{g^{k}}.
\end{split}\end{equation*}
Therefore by Lemma~\ref{l} we have for $0 \le i < m$
$$\left | T(\ell_k)^i (y) - y \right | > \frac{\lambda^*(\ell_{k+1})}{g^{k+1}}\ \text{ for } y \in \left( I_A(\ell_k) \cup I_D(\ell_k)\right) \setminus \left( \bigcup_{i=0}^{m} T(\ell_k)^{-i} I_B(\ell_k) \right).$$
For $j \ge 0$ we can find $i < j / \min_\alpha Q_\alpha(\ell_k) < j/ Q_A (\ell_k)$ such that 
$$T(\ell_k)^i (y)= T^{j} (y).$$
Therefore, we have for $0 \le j < mQ_A(\ell_k)$
$$\left | T^j (y) - y \right | > \frac{\lambda^*(\ell_{k+1})}{g^{k+1}} \ \text{ for } y \in \left( I_A(\ell_k) \cup I_D(\ell_k)\right) \setminus \left( \bigcup_{i=0}^{m} T(\ell_k)^{-i} I_B(\ell_k) \right).$$

For each $x \in T^i(I_\alpha(\ell_k)), 0 \le i < Q_{\alpha}(\ell_k)$,
let $$\phi_k (x) = T^{-i} (x) \in I_\alpha(\ell_k) \subset [0, \lambda^*(\ell_k)).$$
If $T^r (x) \in P_k(x)$, then we have
$$T^\tau( \phi_k(x) ) - \phi_k(x) = T^\tau (x) - x .$$
Hence we have
\begin{equation*}
\begin{split}
G_k &\subset \left \{ x \in [0,1) : \phi_k(x) \notin \left( I_A(\ell_k) \cup I_D(\ell_k)\right) \setminus \left( \bigcup_{i=0}^{m} T(\ell_k)^{-i} I_B(\ell_k) \right)  \right \} \\
&= \left \{ x \in [0,1) : \phi_k(x) \in  \left( \bigcup_{i=0}^{m} T(\ell_k)^{-i} I_B(\ell_k) \right) \cup I_C (\ell_k) \right \} .
\end{split}
\end{equation*}
Therefore, we have
\begin{equation}\label{gkbound}
\begin{split}
\mu( G_k) &\le m \lambda_B(\ell_k) \max_\alpha Q_\alpha (\ell_k) + \lambda_C (\ell_k) Q_C (\ell_k) \\
&= g^{k+1} \cdot \left( \frac{\lambda^*(\ell_{k+1})}{g^{k+1}} \right)^\varepsilon \cdot \frac{\lambda_B(\ell_k) }{\lambda^*(\ell_{k+1})} \cdot \frac{Q_D (\ell_k)}{Q_A (\ell_k)} + \lambda_C (\ell_k) Q_C (\ell_k) \\
&< g^{k+1} \cdot \frac{1}{g^{2k+4}} \cdot 1 \cdot \left(g + \frac{1}{g^{2k}} \right) + \frac{1}{g^{2k+2}} < \frac{1}{g^{k+1}} + \frac{1}{g^{2k+2}}.
\end{split}
\end{equation}

From (\ref{fkbound}) and (\ref{gkbound}), the Borel-Cantelli Lemma implies that for almost every $x$, $x \in E_k$ finitely many $k$'s.
Therefore we have
$$ \liminf_{r \to 0^+} \frac{\log \tau_{r}(x)}{-\log r} \ge 1, \text{ a.e. } x.$$
\end{proof}

\section{Example with Condition (Z) without Condition (U)}\label{exm_sec2}

In this section, we discuss an example of 4 interval exchange map such that satisfies Condition (Z) but not Condition (U).

Let $T$ be the interval exchange map with the permutation data $\pi^{(0)} = \bigl( \begin{smallmatrix} A&B&D&C\\D&A&C&B \end{smallmatrix} \bigr)$
and the infinite path in the Rauzy diagram denoted by the winner of each arrow
$$ C B^3 \left(D^2 A^3 D\right)^{2^1} B  \cdot
C B^3 \left(D^2 A^3 D\right)^{2^2} B \cdots C B^3 \left(D^2 A^3 D\right)^{2^{k}} B \cdots . $$
Then there is no path of more than 3 arrows of the same winner. Thus, $T$ satisfies Condition (Z).

Let
$$\ell_k = \sum_{i=1}^k \left( 5 + 6 \cdot  2^{i} \right)
= 5 k + 12 \cdot (2^{k} - 1), \quad \ell_0 = 0.$$
Then $\gamma (\ell_{k-1}, \ell_k)$ is $C B^3 \left(D^2 A^3 D\right)^{2^{k}} B $
and 
$$
Q( \ell_{k-1}, \ell_k ) = \begin{bmatrix} 
F_{2^{k+1}+1} & F_{2^{k+1}} & F_{2^{k+1}} & F_{2^{k+1}} \\
F_{2^{k+1}+1}-1 & F_{2^{k+1}}+1 &  F_{2^{k+1}}+1 & F_{2^{k+1}} \\
F_{2^{k+1}+1}-1 & F_{2^{k+1}}+2 & F_{2^{k+1}}+3 &  F_{2^{k+1}} \\ 
F_{2^{k+1}+2}-1 & F_{2^{k+1}+1}+ 1&F_{2^{k+1}+1}+ 1 & F_{2^{k+1}+1} \end{bmatrix},
$$
where $F_n$ is the Fibonacci sequence as before.
Here, we have
$$ \|Q( \ell_{k-1}, \ell_k )\| = 9F_{2^{k+1}} + 6F_{2^{k+1}+1} + F_{2^{k+1}+2} + 6 < \frac{8g^{2^{k+1}+2}}{\sqrt 5} <  g^{2^{k+1}+5}.$$
Also we have
$$ \lambda^*(\ell_k) 
< \frac{\lambda^*(\ell_{k-1})}{F_{2^{k+1}+1}+3F_{2^{k+1}}}
< \frac{\lambda^*(\ell_{k-1})}{F_{2^{k+1}+3}}. $$
Note $T(\ell_{k}+3)$ has the same permutation data with $T(\ell_k)$, $\pi^{(\ell_{k}+3)} = \bigl( \begin{smallmatrix} A&B&D&C\\D&A&C&B \end{smallmatrix} \bigr)$.
The matrix for the path $B \left(D^2 A^3 D\right)^{2^{k+1}} B$ starting from $\bigl( \begin{smallmatrix} A&B&D&C\\D&A&C&B \end{smallmatrix} \bigr)$ is 
$$ Q(\ell_{k} + 3, \ell_{k+1} ) = \begin{bmatrix} 
F_{2^{k+2}+1} & 0 & 0 &F_{2^{k+2}} \\
F_{2^{k+2}+1}-1 & 1 & 0 &  F_{2^{k+2}} \\
F_{2^{k+2}+1}-1 & 1 & 1 & F_{2^{k+2}} \\
F_{2^{k+2}+2}-1 & 1 &  0 & F_{2^{k+2}+1} \end{bmatrix}.$$

Since $\lambda(\ell_{k+1}) Q(\ell_{k} +3, \ell_{k+1} ) = \lambda(\ell_k + 3)$, 
length data $\lambda(\ell_k + 3)$ is a vector in the simplex with the vertexes
$$
\lambda^*(\ell_{k+1}) \begin{bmatrix} F_{2^{k+2}+1} & 0 & 0 & F_{2^{k+2}} \end{bmatrix}, 
\lambda^*(\ell_{k+1}) \begin{bmatrix} F_{2^{k+2}+1}-1& 1& 0 & F_{2^{k+2}} \end{bmatrix}, $$
$$
\lambda^*(\ell_{k+1}) \begin{bmatrix} F_{2^{k+2}+1}-1 & 1 & 1 & F_{2^{k+2}} \end{bmatrix}, 
\lambda^*(\ell_{k+1}) \begin{bmatrix}F_{2^{k+2}+2}-1 & 1 & 0 & F_{2^{k+2}+1}\end{bmatrix}. 
$$
Therefore we have
$$ 0 < \lambda_B (\ell_k +3) < \lambda^*(\ell_{k+1})$$
and for all $x \in I_C(\ell_k + 3 )$  we have 
\begin{align*}
| T(\ell_k + 3 ) (x) - x | &= \lambda_B (\ell_k +3) < \lambda^*(\ell_{k+1}) < \frac{\lambda^*(\ell_0)}{F_{2^{k+2}+2} F_{2^{k+1}+3} \cdots F_{2^{2}+3}}\\
&< \frac{5^{(k+1)/2}}{g^{2^{k+2}+2} g^{2^{k+1}+3} \cdots g^{2^{2}+3}}
= \frac{5^{(k+1)/2}}{g^{ 2^{k+3} +3k - 2}}
< \frac{1}{g^{ 2^{k+3}+k-4}}.
\end{align*}

Since 
$$ \begin{bmatrix} 1&0&0&0\\0&1&1&0\\0&1&2&0\\0&1&1&1 \end{bmatrix} Q (\ell_k) = Q (\ell_k +3 ),$$
we have
\begin{align*}
Q_C (\ell_k +3 ) &= Q_B (\ell_k) + 2 Q_C (\ell_k)
\le 2 \| Q(\ell_k) \| \le 2 \| Q(\ell_{k-1},\ell_k) \cdots Q(\ell_0, \ell_1) \|   \\
&\le  2 \| Q(\ell_{k-1},\ell_k) \|  \cdots \| Q(\ell_0, \ell_1) \|
< 2 g^{2^{k+1}+5} \cdots g^{2^2+5}< 2 g^{2^{k+2} + 5k} .
\end{align*}
Thus, put $r = \lambda_B (\ell_k +3)$. Then if $k \ge 4$, we have for $x \in I_C(\ell_k + 3 )$
\begin{align*}
\frac{\log \tau_r (x)}{-\log r} &< \frac{\log Q_C (\ell_k +3 )}{- \log \lambda_B (\ell_k +3)} < \frac{(2^{k+2} + 5k) \log g + \log 2}{( 2^{k+3} + k - 4)\log g} \\
&< \frac{1 + 5 \cdot k \cdot 2^{-k-2} + 2^{-k-1}}{2}  
< \frac 34.
\end{align*}
Hence, $\frac{\log \tau_r (x)}{-\log r}$ does not converges to 1 uniformly.

\end{document}